\documentclass[reqno]{amsart}
 \usepackage{amssymb,amsmath,amscd,graphicx,color,epstopdf,mathtools,comment}
\usepackage[all]{xy}
\usepackage{color}
\usepackage{float}
\usepackage{hyperref}
\usepackage{enumerate}
\usepackage{amsthm}
\usepackage{epsfig}
\usepackage[english]{babel}
\usepackage[latin1]{inputenc}

\newcommand{\p}{\mathrm{p}}

\renewcommand{\gg}{\mathfrak{g}}
\newcommand{\gl}{\mathfrak{gl}}

\newcommand{\hh}{\mathfrak{h}}
\newcommand{\kk}{\mathfrak{k}}
\renewcommand{\ll}{\mathfrak{l}}
\newcommand{\D}{\mathrm{D}}

\newcommand{\uu}{\mathfrak{u}}

\renewcommand{\ss}{\mathfrak{s}}
\newcommand{\pp}{\mathfrak{p}}
\renewcommand{\sl}{\mathfrak{sl}}
\newcommand{\so}{\mathfrak{so}}
\renewcommand{\sp}{\mathfrak{sp}}

\newcommand{\cH}{\mathcal{H}}
\newcommand{\cI}{\mathcal{I}}

\newcommand{\cM}{\mathcal{M}}

\newcommand{\cO}{\mathcal{O}}
\newcommand{\cP}{\mathcal{P}}

\newcommand{\cS}{\mathcal{S}}
\newcommand{\cT}{\mathcal{T}}

\newcommand{\cU}{\mathcal{U}}
\newcommand{\cY}{\mathcal{Y}}

\newcommand{\R}{\mathbb{R}}

\newcommand{\C}{\mathbb{C}}
\newcommand{\E}{\mathrm{E}}
\newcommand{\T}{\mathbb{T}}
\newcommand{\pr}{\mathrm{pr}}

\newcommand{\w}{\omega}
\newcommand{\Sl}{\mathrm{Sl}}

\newcommand{\So}{\mathrm{So}}

\newcommand{\K}{\mathrm{K}}

\renewcommand{\L}{\mathrm{L}}

\renewcommand{\T}{\mathrm{T}}
\newcommand{\U}{\mathrm{U}}
\newcommand{\G}{\mathrm{G}}

\renewcommand{\H}{\mathrm{H}}

\newtheorem{theorem}{Theorem}[section]
\newtheorem{lemma}[theorem]{Lemma}
\newtheorem{proposition}[theorem]{Proposition}
\newtheorem{corollary}[theorem]{Corollary}
\newtheorem{definition}[theorem]{Definition}
\newtheorem{example}[theorem]{Example}
\newtheorem{remark}[theorem]{Remark}

\title[The topology of tridiagonal isospectral sets]{The topology of tridiagonal isospectral sets}
\author{David Mart\'inez Torres}
\address{Department of Applied Mathematics, ETSAM Section, Universidad Polit\'ecnica de Madrid,
Avda. Juan de Herrera 4, 28040 Madrid, Spain}
\email{df.mtorres@upm.es}

\begin{document}

\begin{abstract} We describe the topology of the set of tridiagonal symmetric real matrices with fixed non-simple spectrum. The tridiagonal isospectral set is not a submanifold of the corresponding orthogonal conjugacy class, but a collection of submanifolds  which intersect cleanly. We show that the tridiagonal isospectral set is an aspherical space   and that its Betti numbers are combinatorial numbers. This  generalizes to arbitrary spectrum constructions and results known for simple spectrum. We also extend these results to split real semisimple Lie algebras.
 \end{abstract}
\maketitle

\section{Introduction}

Tridiagonal symmetric real matrices are classical objects in mathematics. There are not just ubiquitous in
numerical linear algebra but  they also have have  connections with many different subjects, such as difference equations, discrete integrable systems, and orthogonal polynomials, among others. On the one hand, tridiagonal symmetric matrices are elementary enough so that robust algorithms can be applied to them. On the other hand, they are general enough for instance to appear as adjacency matrices of complex systems in mathematical physics.

The topology of tridiagonal  symmetric real matrices with fixed simple spectrum was described in \cite{To}. It was proven that the integral homology  of the tridiagonal isospectral manifold is torsion free, that its Betti numbers are combinatorial numbers, and that the tridiagonal isospectral manifold is  aspherical and it is covered by euclidean space. These results were generalized in \cite{Da2} to analogs of tridiagonal isospectral manifolds for arbitrary split real semisimple Lie algebras.

The purpose of this note is to describe the topology of the {\bf tridiagonal isospectral set}
for any non-simple spectrum,
\[\cT=\{Q^T\Lambda Q\in \H\,|\,Q\in \So\},\]
where $\H\subset \sl$ is the subspace of traceless  upper Hessenberg real matrices, $\Lambda\in \sl$ is a fixed traceless  diagonal matrix, and $\So$ is the special orthogonal group.

Tridiagonal symmetric (real) matrices with non-simple spectrum are necessarily reducible  ---they have at least a zero non-diagonal entry--- and thus they split into block irreducible matrices. From the differential topology view, this means that for $\Lambda$ a diagonal matrix with non-simple spectrum its  tridiagonal isospectral set $\cT$
is a union of products of tridiagonal isospectral manifolds.

\begin{example}\label{ex:RP2} Let $\Lambda$ be the diagonal $3\times 3$ matrix with ordered entries $\{-2,1,1\}$. Its orthogonal conjugacy class
 is diffeomorphic to  $\mathbb{RP}^2$. It contains the  diagonal matrices with ordered entries $\Lambda$,  $\{1,1,-2\}$ and $\{1,-2,1\}$. The tridiagonal isospectral set is the join of two circles.
\begin{center}
\begin{figure}[h]
\includegraphics[scale=0.8]{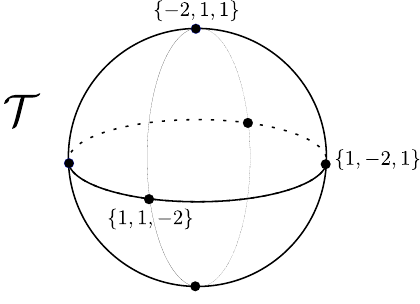}
 \caption{$\mathcal{T}$ is the image in $\mathbb{RP}^2$ of  the above two  great circles.}
\end{figure}
\end{center}
\end{example}
%

A  submanifold {\bf arrangement} of a manifold is a collection of (embedded) submanifolds with  clean intersections: The intersection of each subset of submanifolds must be a submanifold whose tangent space is the intersection of the respective tangent spaces.
Each submanifold of the arrangement is called a submanifold component.

Our first contribution fills in the details of a folklore result.

\begin{proposition}\label{pro:arrangement0} The tridiagonal isospectral set is a submanifold arrangement of its orthogonal conjugacy class. Each of its submanifold components is a product tridiagonal isospectral manifold.
\end{proposition}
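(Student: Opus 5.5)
Since $Q^T\Lambda Q$ is symmetric, lying in $\H$ means being tridiagonal, so $\cT$ is the set of symmetric tridiagonal matrices in the orthogonal conjugacy class $\cO_\Lambda$ of $\Lambda$. The plan is to index the candidate submanifold components by block decompositions together with an assignment of eigenvalues to blocks, identify each with a product of tridiagonal isospectral manifolds for simple spectrum (as in \cite{To}), and then check embeddedness and clean intersections by reducing to stabilizers and transversality statements.

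\textbf{Step 1 (decomposition).} For a composition $n=n_1+\dots+n_k$ and a partition $\sigma=(\sigma_1,\dots,\sigma_k)$ of the spectrum multiset of $\Lambda$ into sub-multisets $\sigma_i$ with $|\sigma_i|=n_i$, each consisting of \emph{distinct} values, let $M_{\sigma}$ be the set of block diagonal tridiagonal matrices whose $i$-th block is a tridiagonal symmetric matrix with spectrum $\sigma_i$. Write $\cT_{\sigma_i}$ for the tridiagonal isospectral manifold of the simple spectrum $\sigma_i$, allowing reducible matrices, i.e.\ the closure of the Jacobi part; this is a smooth compact manifold by \cite{To}. Then $M_\sigma\cong\prod_i \cT_{\sigma_i}$. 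Every $T\in\cT$ lies in some $M_\sigma$: an unreduced tridiagonal block has simple spectrum, so after splitting $T$ into irreducible blocks and grouping consecutive blocks into maximal runs with pairwise distinct eigenvalues one gets such a $\sigma$. Conversely each $M_\sigma\subset\cT$. I would take as components the maximal $M_\sigma$ for inclusion, together with whatever extra intersections are needed. These are the $\sigma$ that cannot be coarsened by merging two consecutive blocks with disjoint spectra.

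\textbf{Step 2 (embeddedness).} The inclusion $\prod_i\cT_{\sigma_i}\hookrightarrow \cO_\Lambda$ is injective and has compact source, so it suffices to show it is an immersion. Here $\cT_{\sigma_i}$ is a submanifold of the conjugacy class $\cO_{\sigma_i}$ in $\mathfrak{sym}(n_i)$. The block diagonal class $\prod_i\cO_{\sigma_i}$ is the fixed point set inside $\cO_\Lambda$ of the finite group of sign changes $\mathrm{diag}(\pm I_{n_1},\dots,\pm I_{n_k})$ acting by conjugation. Hence it is an embedded submanifold, and so $M_\sigma$ is embedded.

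\textbf{Step 3 (clean intersections).} This is the main obstacle. Intersections $M_\sigma\cap M_\tau$ are again unions of sets of the form $M_\rho$, where $\rho$ is a common refinement of $\sigma$ and $\tau$ compatible with both eigenvalue assignments. I would first show such an intersection is a disjoint union of the $M_\rho$, using that the spectra of blocks are locally constant. Then I need $T M_\sigma\cap TM_\tau=TM_\rho$ at each point. I would use the description of $\cT_{\sigma_i}$ from \cite{To}: near a point where off-diagonal entries $b_j$ vanish, $\cT_{\sigma_i}$ is locally parametrized by the off-diagonal entries $b_j$ together with reduced flow coordinates; one can also use the Toda flows or the $\T^n$-like sign/moment coordinates. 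A tangent vector to $M_\sigma$ at $T\in M_\rho$ is a symmetric tridiagonal $\dot T$ with $\dot T_{j,j+1}=0$ at the $\sigma$-breaks, with the blocks having isospectral variations. Intersecting with the analogous condition for $\tau$ forces zeros at the $\rho$-breaks. The remaining issue is to check that the isospectrality constraint on the coarser blocks restricts to isospectrality on the finer $\rho$-blocks at first order. I would try to prove this via the first-order perturbation of eigenvalues: eigenvalues of distinct blocks are separated within a $\sigma$-block, so their first variations are block-diagonal entries of $\dot T$ in an eigenbasis. Alternatively, and perhaps more robustly, I would realize each $M_\sigma$ locally as a fixed point set of a compatible local involution, since intersections of fixed point sets of commuting involutions are automatically clean.
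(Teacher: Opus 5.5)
Your proposal is correct and uses the paper's bookkeeping: your $\sigma$ are its simple partitions of $\cT$, and your covering step is its observation $X\in\cT_{\p_X}$. The clean-intersection step, however, goes a different way.

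\textbf{The paper's route.} It proves $\bigcap_i\cT_{\p_i}=\cT_\p$ with $\p=\bigcap_i\p_i$ by flowing a point of the intersection with the Toda vector field to a diagonal matrix, and by relating any two diagonal points of the intersection through a block-preserving permutation. It then reads off the tangent condition from $\So(\p)=\bigcap_i\So(\p_i)$, i.e.\ from tangent vectors of the form $[X,U]$ with $U\in\so(\p)$.

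\textbf{Your route.} You argue set-theoretically for the intersection and by first-order eigenvalue perturbation for the tangent spaces. A common tangent vector is tridiagonal and block diagonal for the common refinement $\rho$. Each $\sigma$-block $B=\mathrm{diag}(B_1,\dots,B_m)$ has simple spectrum, so its eigenvectors are supported in single $\rho$-blocks. Hence $v^T\dot Bv=0$ for all eigenvectors $v$ of $B$ is the same as the corresponding condition for each $B_j$.

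\textbf{Comparison.} Your argument works and is more elementary. It also spells out the inclusion $\bigcap_iT_XM_{\sigma^{(i)}}\subset T_XM_\rho$, which the paper compresses into the group identity. The paper's formulation, in turn, is the one that carries over verbatim to split real semisimple Lie algebras.

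\textbf{What to fix.}
\begin{itemize}
\item Commit to the perturbation argument and drop the involution alternative. Block sign changes fix far more than $M_\sigma$, and producing commuting local involutions whose fixed sets are the $M_\sigma$ would amount to the paper's adapted chart $\Psi$.
\item State explicitly that you use Tomei's transversality $T_B\cT_{\sigma_i}=T_B\cO_{\sigma_i}\cap\mathrm{Trid}$ at possibly reducible $B$. You need it twice: to describe $TM_\sigma$, and to conclude $\dot B_j\in T_{B_j}\cT_{\mathrm{spec}(B_j)}$.
\item Your ``disjoint union of $M_\rho$'' is in fact a single $M_\rho$. Reading left to right, each $\rho$-block's spectrum is forced: it is the multiset difference between the spectrum of the $\sigma^{(i)}$-block ending at the same position and the spectra already assigned to the earlier $\rho$-blocks inside it. This connectedness is not needed for the proposition itself, but the paper proves it and reuses it later for the mirrors.
\item Minor: the components are just the maximal $M_\sigma$, with nothing extra added.
\item Minor: in Step 2, $\prod_i\cO_{\sigma_i}$ is one connected component of the sign-change fixed-point set, not all of it. This does not affect the embeddedness conclusion.
\end{itemize}
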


 To discuss the homology of the tridiagonal isospectral set, we recall in some detail  the classical result for the simple spectrum case. Let $\Lambda_{n}$ be the diagonal matrix whose ordered entries are the ordered set $\overrightarrow{n}=\{1,2,\dots,n\}$ (that the trace be non-zero is irrelevant). Diagonal matrices in its orthogonal conjugacy class $\cO_{n}$  correspond to permutations. For a permutation $P\in \cS_n$ an ascent is given by consecutive indices such that $P(i)<P(i+1)$. In \cite{To} it was proven that the integral cohomology of the tridiagonal isospectral manifold $\cT_n$ is torsion free and its  $d$-th Betti number
is given by the number of permutations with $d$ ascents.

Consider  the ordered multiset
\begin{equation}\label{eq:multiset}\overrightarrow{m}=\{1,\dots,1,2,\dots,2,\dots,k,\dots k\},
 \end{equation}
 where $i$ appears $n_i$ times and $\sum_i n_i=n$. A permutation of ${\overrightarrow{m}}$ is an ordered list whose elements are the ones in $\overrightarrow{m}$. An ascent for it is defined as in the case of ordinary permutations. The ordered spectrum of an orthogonal conjugacy class $\cO\subset \sl$ defines an ordered multiset.

 Our second result states that the integral homology is encoded by ascents, regardless of the spectrum being simple or not.

\begin{theorem}\label{thm:betti} The integral homology of the tridiagonal isospectral set  is torsion free. Its $d$-th Betti number equals the number of permutations with $d$ ascents of the ordered multiset defined by its spectrum. If its spectrum is in bijection with $\overrightarrow{m}$ in \eqref{eq:multiset}, then by a result of MacMahon \cite{MM}
 \[\beta_d(\cT)=\sum_{j=0}^{d+1} (-1)^j\begin{pmatrix}n+1\\j\end{pmatrix}\begin{pmatrix}n_1+d-j\\n_1\end{pmatrix}\cdots \begin{pmatrix}n_k+d-j\\n_k\end{pmatrix}.\]
\end{theorem}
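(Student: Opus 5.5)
We compute the homology of $\cT$ from an explicit regular CW structure whose cells are indexed by permutations of $\overrightarrow{m}$ and whose cellular boundary vanishes; this mirrors Tomei's argument \cite{To} for simple spectrum. Proposition~\ref{pro:arrangement0} supplies the pieces. Each submanifold component of $\cT$ is a product of tridiagonal isospectral manifolds $\cT_{n_1'}\times\cdots\times\cT_{n_r'}$, one factor for each irreducible block. The spectrum of each block is a \emph{simple} subset of the spectrum, since an irreducible tridiagonal symmetric matrix has simple spectrum.

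For each factor we use the simple-spectrum picture. The signs of the off-diagonal entries act by a group $(\mathbb{Z}/2)^{n-1}$, and the quotient $\cT_n/(\mathbb{Z}/2)^{n-1}$ of the closure of the positive stratum is the permutohedron. Its faces are indexed by ordered set partitions, and its vertices are the diagonal matrices. Tomei's cell structure on $\cT_n$ takes one closed cell for each vertex $P$ of a suitably chosen face. Concretely, a matrix with some off-diagonal entries zero and the others with prescribed signs lies in a face whose blocks carry the ordered spectra. The cell of dimension $d$ attached to a diagonal matrix $P$ is the stratum where exactly the off-diagonal entries at the ascents of $P$ are allowed to be nonzero, with fixed sign pattern. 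In the cellular chain complex the boundary maps vanish because cells come in sign pairs that cancel. This gives torsion freeness and $\beta_d=\#\{P:\ d \text{ ascents}\}$.

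To globalize to non-simple spectrum, we stratify $\cT$ by the zero pattern of the off-diagonal entries together with the signs of the nonzero ones. A stratum is a product of open cells, one per block. The block decomposition of a point determines the set of positions where the off-diagonal entry vanishes, and each block has distinct eigenvalues. The strata therefore glue into a regular CW structure on $\cT$, since the faces of a product of permutohedra are again products, compatibly with the clean intersections of Proposition~\ref{pro:arrangement0}.

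Now choose the Tomei-type cells consistently. Assign to each diagonal matrix in $\cO$, that is, to each permutation $w$ of $\overrightarrow{m}$, the cell built on the positions $i$ with $w(i)<w(i+1)$. Because equal eigenvalues can never be adjacent inside an irreducible block, a position with $w(i)=w(i+1)$ is neither an ascent nor allowed to open. This is exactly why the ascent count for multisets, which uses strict inequality, appears. The dimension of this cell is the number of ascents of $w$.

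The main obstacle is proving that these cells form a CW decomposition of $\cT$ and that the cellular boundary vanishes. I would first try a discrete Morse function, or equivalently the gradient flow of $\mathrm{tr}(N L)$ with $N=\mathrm{diag}(1,\dots,n)$. This is the Toda/QR-type flow used by Tomei, and it preserves $\cT$ and each stratum. Its fixed points are the diagonal matrices, and the unstable manifold at $w$ has dimension equal to the number of ascents. The index count should be checked blockwise, using the simple-spectrum result on each product component.

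The boundary of such an unstable cell meets the $(d-1)$-cells in pairs related by a sign flip of one off-diagonal entry. Along these pairs the orientations cancel, exactly as in \cite{To}, so every cellular boundary map is zero. Hence $H_d(\cT;\mathbb{Z})$ is free of rank equal to the number of permutations of $\overrightarrow{m}$ with $d$ ascents. The closed formula is then MacMahon's enumeration \cite{MM} of multiset permutations by ascents, quoted directly.
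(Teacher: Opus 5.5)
The paper follows the same outline: one cell per permutation of $\overrightarrow{m}$, with dimension equal to its number of ascents; cellular boundaries killed by the sign reflections of $\E$; MacMahon for the closed formula. However, there is a genuine gap. The step you yourself call ``the main obstacle'' is where all of the non-simple-spectrum content lies, and your proposal does not supply it.

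Because $\cT$ is not a manifold, several components $\cT_\p$, $\p\in\mathfrak{M}(\Lambda)$, pass through a diagonal matrix $\Lambda$. On $\cT_\p$ the critical point $\Lambda$ has index equal only to the number of ascents of $\Lambda$ lying inside blocks of $\p$. So checking ``the index count blockwise on each product component'' produces several intersecting cells of different dimensions at $\Lambda$, and two things must be proved:
(i) crossing the level $f(\Lambda)$ changes the sublevel set of $\cT$ by attaching these cells, i.e.\ Milnor's handle retraction can be run simultaneously on all components through $\Lambda$, compatibly along their intersections;
(ii) the union of these cells is a single cell whose dimension is the number of ascents.

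The paper gets (i) from the adapted coordinates of Proposition~\ref{pro:adapted-coordinates}. In them $\cT$ becomes the coordinate arrangement $\T^\Lambda$, the Toda field becomes diagonal linear, and $f$ can be made a diagonal quadratic form. The standard homotopy then preserves every $\T_\p$ (Lemma~\ref{lem:Milnor}). It gets (ii) from Lemma~\ref{lem:runs}: in this chart all descending subspaces lie in the span $W^s(\Lambda)$ of the ascent coordinates. That span is itself contained in $\T^\Lambda$, since the partition into maximal strictly increasing runs is simple at $\Lambda$.

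Your remark that equal eigenvalues cannot be adjacent inside an irreducible block is the seed of (ii). Without a common local model, though, it does not tell you how the cells of different components fit together. The same chart also makes the retractions $\E$-equivariant, which your sign-cancellation step needs. In the paper, the incidence degree vanishes because of a reflection $\sigma_i$, with $i$ an ascent of $\Lambda$ but not of $\Lambda'$, which acts non-trivially on the cell at $\Lambda$ and trivially on the cell at $\Lambda'$.

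The concrete cell structures you suggest also do not work as stated. The stratum where exactly the ascent entries of $P$ are nonzero splits into $2^d$ open chambers according to signs. So fixing a sign pattern does not give one cell per permutation. The zero/sign-pattern strata do form a regular CW structure, but its cells are products of faces of permutohedra, far more numerous than permutations, and its boundary maps do not vanish.

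Nor do the Toda cells themselves form a CW decomposition of $\cT$, even for simple spectrum, because the flow connects critical points of equal index. In $\cT_3$, the closure of the one-cell of $(1,3,2)$, built on its only ascent, is the circle $\{X_{32}=0,\ X_{33}=2\}$. That circle passes through $(3,1,2)$, which is itself the centre of a one-cell. A complex with one cell per permutation exists only up to homotopy, assembled from sublevel sets of $f$. This is exactly why the local analysis in (i) cannot be bypassed.
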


The main tool to prove Theorem \ref{thm:betti} is the same as in the simple spectrum case: The Toda vector field together with a  known associated Lyapunov function $f:\cO\to \R$.
This is a Morse function on $\cT$ as well, in the sense that it is Morse on each submanifold component. Its critical points are by definition the critical points of its restriction to the submanifold components. At each critical point ---a diagonal matrix--- cells on several submanifold components are attached. In principle, this might be  a  complicated attaching of a collection of intersecting cells. As it turns out, all cells are contained in a larger one, so the situation is analogous to that of Morse functions on manifolds, where a CW-structure is built by attaching just one cell for each critical point. This allows for encoding the homology of $\cT$  by means of combinatorics of permutations of multisets.


Our third result generalizes the homotopy type description to the non-simple spectrum case.

\begin{theorem}\label{thm:aspherical} The tridiagonal isospectral set  is aspherical.
\end{theorem}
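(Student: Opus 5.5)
Write $\cT$ as a union of its submanifold components (Proposition \ref{pro:arrangement0}) and reduce asphericity to a statement about a contractible cover. In the simple spectrum case, \cite{To} and \cite{Da2} show that $\cT_n$ is covered by euclidean space. Equivalently, $\cT_n$ is a union of closed pieces, one for each sign pattern of the subdiagonal entries. Each closed piece is a compactified open orthant, and these closures are (after a Moser/Toda type map) convex polytopes, namely permutohedra, glued along faces.

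The plan for general spectrum has the following steps.

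First, describe $\cT$ combinatorially as a cubical-type complex. The matrices in $\cT$ with fixed signs $\epsilon_i\in\{-1,0,1\}$ of the subdiagonal entries form cells. Each cell is a product of open Moser cells, one for each irreducible block. Each such factor is the set of irreducible Jacobi matrices with prescribed sign pattern and prescribed spectrum multiset assigned to the block. Since an irreducible Jacobi block has simple spectrum, the block spectra are subsets of the distinct values, with multiplicities distributed among the blocks. Moser's theorem gives an explicit diffeomorphism of each factor onto an open simplex (weights of the spectral measure). The closures are then products of simplices or permutohedra glued along faces. This yields a regular polyhedral complex structure on $\cT$ compatible with the Morse CW structure used for Theorem \ref{thm:betti}.

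Second, pass to a cover. Define $\widetilde{\cT}$ by "unfolding the signs": take copies of the closed positive part $\cT^+$ (all $\epsilon_i\ge 0$) indexed by a group $W$ generated by sign flips. Glue these copies as in the Davis construction of a reflection-group-type complex. The sign flips $\epsilon_i\mapsto -\epsilon_i$ are realized by conjugation with diagonal sign matrices in $\So$, which preserve $\cT$. Thus $\cT$ is the quotient of the Davis complex $\mathcal{U}(W,\cT^+)$ by a torsion-free finite index subgroup. $\cT^+$ is contractible: it is the closure of the positive Jacobi set, a polytope-like set, and the Toda flow retracts it to a point. The key then is that $\cT^+$ with its mirror structure (faces $\epsilon_i=0$) satisfies Davis' criterion. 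The nerve, i.e. the poset of nonempty intersections of mirrors, must be a flag complex, or at least the mirror intersections must be contractible and the nerve CAT(0)/flag. Davis' theorem then says the universal cover is contractible, hence $\cT$ is aspherical.

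The main obstacle is the non-simple spectrum. Here not every set of vanishing subdiagonal entries is realizable: a block of size exceeding the number of distinct eigenvalues cannot be irreducible. So $\cT^+$ is no longer a full permutohedron-like polytope with all mirror intersections nonempty. Instead it is a subcomplex, and one must check that the mirror intersections that do occur are contractible and that the nerve is flag.

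I would handle this directly. A set $S$ of indices with $\epsilon_i=0$ is realizable iff the multiset $\overrightarrow{m}$ can be split into consecutive blocks each carrying distinct values. The intersection over $S$ is a disjoint union, over such distributions, of products of lower-dimensional positive Jacobi sets. The delicate point is that this union might be disconnected. If it is, I would refine the mirror structure so that each distribution counts as its own stratum. I would then verify flagness combinatorially: pairwise compatible zero constraints are jointly compatible, since the obstruction to irreducibility is local to each block.

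If Davis' framework proves awkward, the fallback is to build an explicit nonpositively curved cubical structure on $\cT$ and apply Gromov's link condition.
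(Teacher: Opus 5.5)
Your architecture matches the paper's second proof (Theorem \ref{thm:mirror}). It uses the positive chamber $\Delta=\cT^+$, with mirrors the connected components of the fixed sets $\Delta^\sigma$ (one for each way of distributing the multiset between the two blocks). It then takes the right-angled Coxeter group on these mirrors, Davis' basic construction, and the torsion-free kernel of $W\to\E$. Your flagness claim is correct: mirrors cutting at positions $i_1<\cdots<i_s$ meet if and only if the multisets of eigenvalues to the left of the cuts are nested, and that is a pairwise condition.

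\textbf{The gap.} Davis' theorem needs $\Delta$ and every nonempty intersection of mirrors to be contractible. Your justifications for this fail exactly when the spectrum is not simple.
\begin{itemize}
\item $\Delta$ is not a polytope. For non-simple spectrum every matrix in $\cT$ is reducible, so there is no positive Jacobi set whose closure is $\Delta$. Instead $\Delta$ is a complex of permutohedra of different dimensions glued along faces; for $\{-2,1,1\}$ it is two arcs sharing an endpoint.
\item The Toda flow cannot contract $\Delta$. Every diagonal matrix is a fixed point, so any homotopy built from flow lines keeps all of them fixed. This already fails for the permutohedron in the simple case, whose contractibility comes from convexity, not from the flow.
\item Mirror intersections are not products of polytopes. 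Their components are indexed by \emph{all} distributions of the multiset into the blocks, not only those with distinct values in each block. Each factor is the positive chamber of a tridiagonal isospectral set that may still have repeated eigenvalues. For $\{-1,-1,1,1\}$, the mirror $\{-1,-1\}\mid\{1,1\}$ is the single point $\mathrm{diag}(-1,-1,1,1)$, which your description omits. The mirror $\{-1\}\mid\{-1,1,1\}$ is again two arcs sharing an endpoint.
\end{itemize}
So contractibility of mirror intersections is the same problem as for $\Delta$, for smaller product sets.

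\textbf{The missing idea.} You need an actual proof that positive chambers with arbitrary spectrum are contractible. The paper gets it from the Lyapunov function $f$.
\begin{itemize}
\item By Lemma \ref{lem:runs}, at each critical point $\Lambda$ all cells attached on the submanifold components through $\Lambda$ lie inside a single cell $e_{\p_M}$.
\item The intersection of $e_{\p_M}$ with the positive chamber is an orthant of a ball. It can be pushed along a constant vector field into the lower sublevel set. This push moves $\Lambda$ itself and leaves the smaller components, which no flow-built homotopy can do.
\item Descending through the critical levels contracts $\Delta$ onto $\Lambda^-$ (Proposition \ref{pro:contractible}). Applying this to product sets handles the mirror intersections (Proposition \ref{pro:mirror}).
\end{itemize}
Alternatively, the standardization embedding identifies $\Delta$ with the face subcomplex of the permutohedron spanned by a principal order ideal of the weak Bruhat order, which is contractible (Corollary \ref{cor:order-ideal}). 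The paper's first proof (Theorem \ref{thm:aspherical-algebraic-topology}) avoids chambers altogether, using Davis' $\pi_1$-injectivity for standard submanifolds.

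\textbf{Your cubical fallback does not avoid this either.} The evident cube structure, obtained by cubically subdividing the chambers, fails Gromov's link condition at $\mathrm{diag}(-1,0,2,-1)$ in Example \ref{ex:arrangement}(iii). There three surface components meet pairwise along circles, and the vertex link is the $1$-skeleton of an octahedron, which is not flag.
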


For simple spectrum, the homotopy type of $\cT_n$ is analyzed by means of a  mirror structure $\mathcal{M}(\Delta_n)$ defined as follows \cite{To,Da2}. The group $\E$ of sign matrices modulo determinant acts on $\cT_n$ with fundamental domain $\Delta_n$ homeomorphic to a permutahedron. Each face of the permutahedron is a mirror and has an associated reflection. This produces a Coxeter system $W_n$ together with an epimorphism $W_n\to \E$. The so-called basic construction associates to the mirror structure a space
$\cU(W_n,\cM(\Delta_n))$ acted upon by $W_n$ in a proper fashion. This space, tiled by permutahedra, is the universal covering space of $\cT_n$ with group of Deck transformations
the kernel of the epimorphism. By a result of Davis \cite{Da1}, that $\widetilde{\cT}_n$ is contractible follows from the contractibility of the permutahedron, of its faces (mirrors), and of the intersection of its faces.

We shall give three different proofs of Theorem \ref{thm:aspherical}. The quickest one is of algebraic topological nature. It is a consequence of a result of Davis \cite{Da2} that implies that the intersection of submanifold components of $\cT$ is aspherical and its fundamental group injects in the fundamental group of each submanifold component.

In the second approach,
to the tridiagonal isospectral set we shall associate a mirror structure $\cM(\Delta)$, where $\Delta$ is a fundamental domain of the action of $\E$ on $\cT$, with its corresponding Coxeter system, epimorphism, and associated space
\[W\to \E,\quad \cU(W,\cM(\Delta))\cong \widetilde{\cT}.\]
The delicate point is that $\Delta$ is no longer homeomorphic to a permutahedron, or to a (convex) polytope. Rather, it is homeomorphic to a polyhedral complex made of permutahedra of different dimensions. We shall prove the contractibility of $\Delta$, of the mirrors, and of their intersections by means of the  Lyapunov Morse function $f$.

The third approach is by means on an embedding.
The permutations $\cS_n$ and $\cS^{\overrightarrow{m}}$ of the ordered set $\overrightarrow{n}$  and of the ordered multiset $\overrightarrow{m}$, respectively,  are partially ordered sets. The (right)  weak Bruhat order declares
that a permutation precedes another one if the inversion set of the former is included the inversion set of the latter. The standardization map
\[\mu:\cS^{\overrightarrow{m}}\to \cS_n\]
is an inclusion of posets whose image is a principal order ideal ---all permutations that precede a given permutation (see for instance \cite{BB}).
This points to a  different proof of the contractibility of $\Delta$ and of the intersections of the mirrors in $\cM(\Delta)$: To produce an identification of $\Delta$ with  the face subcomplex of the permutahedron defined by the principal order ideal $\mu(\cS^{\overrightarrow{m}})$, which is  known to be contractible \cite{BW}. Some combinatorics provide such a homeomorphism. We shall go further and provide a differential geometric construction
that also accounts for the Toda vector field.

\begin{theorem}\label{thm:mirror-embedding} The standardization map extends to a canonical embedding
\[\mu:\cT\to \cT_n\]
compatible with the Toda vector field.
The mirror structure $\cM(\Delta)$ on  $\cT$ is obtained by intersecting  the mirror structure $\cM(\Delta_n)$ with $\mu(\cT)$.
In particular, we obtain the commutative diagram
\[ \xymatrix{ \widetilde{\cT} \ar[r]\ar[d]     &\ar[d] \widetilde{\cT}_n\cong \R^{n-1} \\
   \cT\ar[r]^{\mu}&  \cT_n
}\]
where the top arrow embeds the contractible arrangement in euclidean space equivariantly (with respect to  $W$ and $W_n$), and vertical arrows are covering maps.
\end{theorem}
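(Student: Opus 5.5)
We construct $\mu$ by deforming the spectrum rather than by a combinatorial identification. Let $\Lambda$ have ordered multiset $\overrightarrow{m}$, and let $\Lambda_t$, $t\in(0,1]$, be a path of traceless diagonal matrices with simple spectrum. We take $\Lambda_t$ to split each repeated eigenvalue $i$ of multiplicity $n_i$ into $n_i$ distinct values, ordered in the way prescribed by the standardization map, and we let $\Lambda_t\to\Lambda$ as $t\to 0$. By Proposition \ref{pro:arrangement0}, each submanifold component of $\cT$ is a product $\cT_{n_1'}\times\cdots\times\cT_{n_r'}$ of tridiagonal isospectral manifolds, one for each irreducible diagonal block. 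On each block the spectrum is simple, because an irreducible tridiagonal symmetric matrix has simple spectrum.

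The map $\mu$ sends a block-diagonal matrix $L=\mathrm{diag}(L_1,\dots,L_r)$ to a matrix of $\cT_n$ as follows. Each eigenvalue of $L$ lies in exactly one block, and within a block the eigenvalues are distinct. For an eigenvalue $\lambda_i$ of multiplicity $n_i$, the blocks containing it can be listed in their order along the diagonal, and that order assigns to each of them one of the split values of $\lambda_i$. Replacing each block's spectrum by these split values gives a spectrum for every block. We then use the Moser coordinates (spectral data plus norming constants, i.e.\ the first coordinates of the eigenvectors) on each irreducible block: we keep each block's norming constants and change only its eigenvalues to the assigned split values. Zero off-diagonal entries stay zero, so the image consists of block-diagonal matrices with spectrum $\Lambda_1$. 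On diagonal matrices this is exactly the standardization map $\cS^{\overrightarrow{m}}\to\cS_n$, because equal entries are relabelled increasingly from left to right.

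We then carry out the checks in order.
\begin{enumerate}[(i)]
\item Well-definedness and continuity across the clean intersections of components. A point on an intersection has more zero entries, and its sub-blocks' Moser data must degenerate consistently. Here we use the Toda flow: the Moser map intertwines the Toda flow, which is linear on the norming constants with rates $e^{t\lambda}$, so matching boundary strata is controlled.
\item Injectivity and the immersion property on each component. Moser coordinates are a diffeomorphism from the irreducible stratum onto its image, so this holds componentwise. On intersections, injectivity follows from the fact that the zero pattern is preserved.
\item Compatibility with the Toda vector field. Toda is $\dot L=[L,\pi(L)]$, and in Moser coordinates it acts by the eigenvalue-dependent exponential scaling. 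Changing eigenvalues while preserving their order within each block keeps the flow lines, the Toda orbit structure, and the limit points (sorted diagonal matrices). So $\mu$ maps Toda orbits onto Toda orbits, up to reparametrization. If exact equivariance is required, we would instead use the order-preserving parametrization and argue only for invariance of orbits.
\item Sign-matrix equivariance. The group $\E$ acts by flipping signs of off-diagonal entries, which acts on the norming-constant signs, and $\mu$ commutes with this action.
\end{enumerate}
It follows that $\mu$ maps the fundamental domain $\Delta$ (nonnegative off-diagonal entries) into $\Delta_n$. The faces of $\Delta_n$ are the loci where an off-diagonal entry vanishes, and their intersection with $\mu(\cT)$ is exactly the corresponding face stratum of $\Delta$. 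This gives $\cM(\Delta)=\cM(\Delta_n)\cap\mu(\cT)$.

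The diagram then follows. The reflections attached to the mirrors correspond, so the Coxeter system $W$ maps to $W_n$, compatibly with the epimorphisms to $\E$. The basic construction is functorial for maps of mirrored spaces, so $\mu$ induces a $W$-equivariant map $\cU(W,\cM(\Delta))\to\cU(W_n,\cM(\Delta_n))\cong\R^{n-1}$. This map is injective because it is injective on the fundamental domains and equivariant, provided $W\to W_n$ is injective. Injectivity of $W\to W_n$ holds because each generator of $W$ is sent to a distinct generator of $W_n$ (a standard parabolic-type inclusion), but this needs checking. The image of $\Delta$ is the subcomplex of the permutahedron indexed by the principal order ideal $\mu(\cS^{\overrightarrow{m}})$, which is contractible by \cite{BW}. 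The same holds for the mirror intersections, as they are again order ideals in faces. The universal cover statement then follows by Davis's theorem \cite{Da1}.

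The main obstacle is step (i): showing the block-wise Moser rescaling glues continuously across the strata where blocks merge, especially when a diagonal entry becomes free. A naive definition depends on choosing the block decomposition. We would try first to express $\mu$ globally through the Toda flow, sending $\lim$ data at $t\to\pm\infty$ to the same limits for $\Lambda_1$, so that canonicity is automatic.
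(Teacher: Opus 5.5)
There is a genuine gap, and it sits in the definition of $\mu$ itself, not just in the gluing step (i) you flagged: keeping the norming constants of an irreducible block while changing its eigenvalues is incompatible with how blocks degenerate, so the map cannot be made continuous. Note first that each factor of a component $\cT_\p$ is a full tridiagonal isospectral manifold and contains reducible matrices, so Moser coordinates only chart an open dense part of each component. The problem therefore already arises inside a single component.

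Suppose a block $J$ with eigenvalues $\lambda_i$ and norming constants $w_i$ degenerates to $(\lambda_{i_0})\oplus J_2$. The relation $(v_i)_2=(\lambda_i-J_{11})(v_i)_1/J_{21}$ shows that the norming constants of the limit block $J_2$ are $\omega_i\propto\lim w_i(\lambda_i-\lambda_{i_0})^2$. Replacing $\lambda_i$ by $\mu_i$ with $w_i$ fixed therefore produces a limit lower block with norming constants proportional to $\omega_i(\mu_i-\mu_{i_0})^2/(\lambda_i-\lambda_{i_0})^2$, not $\omega_i$.

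Here is a concrete instance. Take $\Lambda=\{-1,-1,0,2\}$ with split values $-1-\delta<-1+\delta$, and the circle $\cT_{\p_2}\cap\cT_{\p_3}$ of matrices $(-1)\oplus J\oplus(-1)$, where $J$ has spectrum $\{0,2\}$ (Example \ref{ex:arrangement}(iii)).
Approaching the circle inside $\cT_{\p_2}$, $J$ is the upper sub-block of the degenerating block with spectrum $\{-1,0,2\}$. Its norming constants and eigenvalues are untouched, so the images converge to $(-1-\delta)\oplus J\oplus(-1+\delta)$, which is also your pointwise value.
Approaching inside $\cT_{\p_3}$, $J$ is the lower sub-block and $-1$ is moved to $-1-\delta$. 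The images then converge to $(-1-\delta)\oplus J'\oplus(-1+\delta)$, where the ratio of norming constants of $J'$ is that of $J$ multiplied by $9(1+\delta)^2/(3+\delta)^2\neq 1$ for small $\delta\neq 0$.
The two one-sided limits differ, so no choice of values on the circle makes $\mu$ continuous.

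Step (iii) fails for a related reason. By Symes' formula, Toda multiplies $w_i$ by $e^{2t\lambda_i}$ and renormalizes, so in the coordinates $\log(w_i/w_1)$ its orbits are lines with direction $(\lambda_i-\lambda_1)_i$. Fixing the $w_i$ while changing $\lambda$ to $\mu$ sends Toda orbits to Toda orbits, even up to reparametrization, only if $\mu$ is affine in $\lambda$ on the block. This already fails for $\{-1,0,2\}\mapsto\{-1+\delta,0,2\}$.

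The missing idea is a transport that is compatible with the degeneration of blocks. The paper uses the trace-orthogonal complement $\cH$ of the tangent spaces $T_X\cT_X$ of the product isospectral leaves inside each $\widehat{\cI}$. Via the $\E_{\p'}$-isotypic splitting, it proves (Proposition \ref{pro:Ehreshmann}) that at a point of a lower stratum $\cI'$ this connection coincides with the connection of $\cI'$. That fact, together with choosing the segments through the minimal permutations $P_\p$, is what makes $\mu_\p$ and $\mu_{\p'}$ agree on $\cT_{\p\cap\p'}$. Toda compatibility comes from $[\cY,\cH]\subset\cH$, and $\E$-equivariance from the invariance of the trace and of the strata.

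Once such a $\mu$ exists, the rest of your outline (mirrors, $W\to W_n$, basic construction) matches Theorem \ref{thm:parallel-transport} and Corollary \ref{cor:mirror-emedding}. For the injectivity of $W\to W_n$, which you left open, you need that two mirrors of $\cM(\Delta)$ meet whenever their images in $\cM(\Delta_n)$ meet. The reason is that a special subgroup of a right-angled Coxeter group is the right-angled Coxeter group of the induced subgraph. This is what the last part of the proof of Theorem \ref{thm:parallel-transport} provides.
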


\begin{example}\label{ex:Cayley}
 Let $\Lambda=\{-2,1,1\}$ be as in Example \ref{ex:RP2}. The universal covering space of the
  join of two circles $\cT$ is the Cayley graph $G$ of the free group on two generators.

 The tridiagonal isospectral manifold $\cT_{3}$ is the surface of genus two $\Sigma$.  Its universal covering space is the hyperbolic  disk $\mathbb{H}^2$. The permutahedron $\mathcal{P}_{3}\cong \Delta_3$ is the hexagon. Its Coxeter system is
 \[W_3=\{w_i,\,1\leq i\leq 6,\,|\,w_i^2=1,\, (w_iw_{i+1})^2=1,\,(w_6w_1)^2=1,\, 1\leq i\leq 6\}.\]
 The basic construction tiles  $\mathbb{H}^2$ by hexagons with vertices of valence four.

The embedding $\mu:\cT\to \cT_{3}$ takes  $\Delta$ to the face subcomplex complex of the principal order ideal of $(231)$\footnote{The identification of $\Delta_3\cong \mathcal{P}_3$ the interchanges the (weak) right and left Bruhat orders. Our data in the example is for the latter, so strictly speaking we consider the principal filter of $(231)$.}: The union of the two edges of the hexagon connecting the vertices $(123)$, $(132)$ and $(231)$. The mirror structure $\cM(\Delta)$ comes from the four edges of the hexagon with non-empty intersection with $\mu(\Delta)$,
\[\cM(\Delta)=\{(123),\, [(123),(132)],\, [(132),(231)],\, (231)\}.\]
\begin{center}
\begin{figure}[H]
\includegraphics[scale=1]{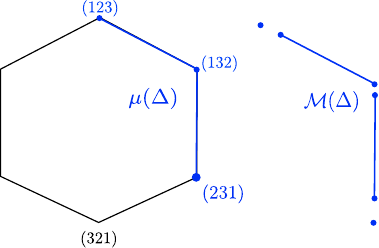}
\end{figure}
\end{center}
The Coxeter system $W\subset W_n$ is generated by the four consecutive reflections associated to the previous four edges. The ensuing commutative diagram embeds the Cayley graph in the hyperbolic disk
\[ \xymatrix{ G \ar[r]\ar[d]     &\ar[d] \mathbb{H}^2 \\
   \cT\ar[r]^{\mu}&  \Sigma
}.\]
Geometrically, we take the fundamental domain  $\Delta_{3}\subset \mathbb{H}^2$ and paste all the copies indexed by $W$, keeping track of how the two edges $\mu(\Delta)$ in the hexagon propagate.
\end{example}

We also present generalizations of some of the previous results in two directions.
On the one hand, we extend
to complex and quaternion matrices  the computation of the homology of the tridiagonal isospectral set.
On the other hand, we extend to split real semisimple Lie algebras all the results on the topology of the tridiagonal isospectral set.

The results of \cite{Da2} for tridiagonal isospectral manifolds in the general Lie algebra setting can be understood as ``differential geometric manifestations/enhancements'' of combinatorial properties of the weak Bruhat order on Weyl groups. From this perspective, our results can be understood as ``differential geometric manifestations/enhancements'' of combinatorial properties of the the weak Bruhat order on parabolic quotients of Weyl groups. Perhaps not surprisingly, the use of the Toda vector field predates the geometric counterparts of the combinatorial constructions.

\subsection*{ Acknowledgments} The author wishes to thank Francisco Santos and Carlos Tomei for valuable conversations and inputs. The author acknowledges  financial support by MCIN-AEI grant PID2022-139069NB-I00.


\section{Basic differential topology of the tridiagonal isospectral set}\label{sec:coordinates-tridiagonal}
In this section we  discuss elementary properties of the differential topology of the tridiagonal isospectral set. We do it by using some combinatorics, together with  the Toda vector field. We also introduce coordinates around diagonal matrices in the orthogonal conjugacy class that are adapted to the tridiagonal isospectral set: They map it to an arrangement of coordinate subspaces  and they are compatible with the Toda vector field. All matrices in this section have real entries.

The Toda vector field $\cY$ on $\sl$ \cite{F} is defined by
\begin{equation}\label{eq:Toda}
X'=[X,\pi_\so X],\quad \mathrm{I}=\pi_\so+ \pi_{\uu},\quad X\in \mathfrak{sl},
\end{equation}
where the identity map  on $\mathfrak{sl}$ is decomposed as the sum of projections
onto traceless skew-symmetric matrices $\so$, and traceless upper triangular matrices $\uu$. A fundamental result of Symes \cite{Sy} is the almost explicit computation of its trajectories,
\[X(t)=\kappa(\mathrm{e}^{tX_0})^TX_0 \kappa(\mathrm{e}^{tX_0}),\quad t\in \R,\qquad \kappa:\Sl\to \So,\]
where $\Sl$ is the special linear group and $\kappa$ is the first projection of the $\mathrm{QR}$ factorization. This result is the reason  why we will be favoring anticonjugation over conjugation.

The Toda vector field is tangent to many relevant submanifolds of $\sl$ and this makes it useful to study their topology. In particular,  if $\Lambda$ is a diagonal matrix with simple spectrum, then $\cY$ is tangent to the tridiagonal isospectral manifold that contains $\Lambda$. The reason is that $\cY$ is not just tangent to conjugacy classes, but also to the smaller orthogonal and upper triangular conjugacy classes.
Because symmetric matrices are preserved by conjugation by orthogonal matrices and Hessenberg matrices are preserved by conjugation by upper triangular matrices, $\cY$ is tangent to their intersection, the subspace  of tridiagonal symmetric matrices $\ss_\H$, and also to any tridiagonal isospectral manifold.

As recalled in the Introduction,  tridiagonal symmetric matrices with non-simple spectrum must be reducible, that is, they must have some zero subdiagonal entry. This implies that the tridiagonal isospectral set
\[\cT=\{Q^T\Lambda Q\in \ss_\H\,|\,Q\in \So\},\]
is a union of products of tridiagonal isospectral manifolds.

To describe the structure of $\cT$ in more detail we need some combinatorics for bookkeeping purposes.

By a partition $\p$ of $\sl$ we mean a collection of consecutive positions $(i_1,i_{1}+1),\dots ,(i_s,i_s+1)$. It defines a corresponding block subalgebra $\sl(\p)$ in which its block tridiagonal symmetric matrices sit, all of which are reducible. By a partition of $\sl$  at a diagonal matrix  $\Lambda$ we mean that $\Lambda$ is regarded as the corresponding block matrix in $\sl(\p)$ (the partition is ``evaluated'' at $\Lambda$). In a more combinatorial manner, if we identify $\Lambda$ with the list of its eigenvalues $\{\lambda_1,\dots,\lambda_n\}$ according to their positions,  then $\p(\Lambda)$ is a partition of the list into consecutive blocks so that $\lambda_{i_{j}},\lambda_{i_j+1}$ are in adjacent blocks of $\p$.
 A partition of $\sl$ is called simple at  $\Lambda$ if the eigenvalues on each block are different.

 To a reducible tridiagonal symmetric matrix $X$ we associate the partition $\p_X$ of $\sl$ given by the positions of its zero subdiagonal entries. If  $X\in \cT$, then we define $\cT_{\p_X}$ to be the intersection of $\ss_\H$ with the $\So(\p_X)$-conjugacy class of $X$,
 \[\cT_{\p_X}=\{Q^TXQ\in \ss_\H\,|\, Q\in \So(\p_X)\}.\]
 Because each block of $X$ is irreducible, the spectrum of the block must be simple. This implies that  $\cT_{\p_X}$ is a product tridiagonal isospectral manifold contained in $\cT$; if $\Lambda$ is a diagonal matrix in $\cT_{\p_X}$, then $\p_X$ is necessarily simple at $\Lambda$.

 A  {\bf simple partition}  of $\cT$
 is defined as an equivalence class of partitions of $\sl$ that are simple at some  diagonal matrix in $\cT$, where $\p(\Lambda)$ and $\p(\Lambda')$ are equivalent if their eigenvalues on each block are the same, regardless of the order. We still use the notation $\p$ to refer to a simple partition of $\cT$; it is understood that there is a choice of $\Lambda$ that we denote by $\p=\p(\Lambda)$, if need be. (And there might be other choices of $\Lambda$ leading to different simple partitions of $\cT$ for the same partition of $\sl$). For such a simple partition $\p=\p(\Lambda)$ the $\So(\p)$-conjugacy class of $\Lambda$ intersected with $\ss_\H$ is  a product tridiagonal isospectral manifold $\cT_\p$. Because the Toda vector field is tangent to the subspace of block matrices $\sl(\p)\subset \sl$, it is also tangent to $\cT_\p$.

 The collection of partitions of $\sl$ simple at $\Lambda$ (resp. simple partitions of $\cT$) is denoted by $\mathfrak{P}(\Lambda)$ (resp. $\mathfrak{P}$). The subset of maximal  partitions of this collection is denoted by $\mathfrak{M}(\Lambda)$ (resp. $\mathfrak{M}$), where a partition precedes another one it the former refines the latter.

The following result is an expanded version of Proposition \ref{pro:arrangement0} in the Introduction.
\begin{proposition}\label{lem:arrangement} The tridiagonal isospectral set $\cT$ is a submanifold arrangement of its orthogonal conjugacy class. Each submanifold component is a product tridiagonal isospectral manifold that corresponds to a maximal simple partition of $\cT$,
 \begin{equation}\label{eq:array}\cT=\bigcup_{\p\in \mathfrak{M}}\cT_{\p}.
  \end{equation}
\end{proposition}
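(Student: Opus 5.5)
The proof splits into three steps: the decomposition \eqref{eq:array} as a set, the fact that each $\cT_\p$ is an embedded submanifold of $\cO$, and the cleanness of all intersections. The last step carries the real content, and I plan to handle it with local coordinates around diagonal matrices.

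\emph{Set-theoretic decomposition.} Take $X\in\cT$ and let $\p_X$ be the partition defined by its zero subdiagonal entries. Each block of $X$ is irreducible, so it has simple spectrum. Diagonalizing each block inside $\So(\p_X)$ produces a diagonal $\Lambda'$ in $\cO$ at which $\p_X$ is simple, and $X\in\cT_{\p_X(\Lambda')}$. Every simple partition is refined by a maximal one $\p$. If $\p'$ refines $\p$ (as simple partitions of $\cT$), then $\cT_{\p'}\subset\cT_\p$, because the block groups satisfy $\So(\p')\subset\So(\p)$ and the relevant diagonal matrices are $\So(\p)$-conjugate. This gives \eqref{eq:array}. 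Conversely, each $\cT_\p$ is a product of tridiagonal isospectral manifolds with simple spectrum, which are known to be closed submanifolds of the corresponding $\So(\p)$-orbits. Since the $\So(\p)$-orbit of $\Lambda$ is a closed embedded submanifold of $\cO$, each $\cT_\p$ is an embedded compact submanifold of $\cO$.

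\emph{Clean intersections via adapted coordinates.} Cleanness is a local statement, so I check it near each point. Near a point $X$, the Toda flow gives equivariance: $\cY$ is tangent to $\cO$ and to every $\cT_\p$, and every point of $\cT$ flows to a diagonal matrix as $t\to\pm\infty$. I would rather avoid dynamics, though. Near a non-diagonal $X$ with partition $\p_X$, the set $\cT$ is locally contained in the block situation $\sl(\p_X)$, which reduces the problem to smaller blocks by induction on $n$. So the essential case is a diagonal $\Lambda'\in\cO$.

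Around $\Lambda'$ I use the chart $\so\supset\mathfrak m\ni A\mapsto e^{-A}\Lambda' e^{A}$. Here $\mathfrak m$ is the complement of the stabilizer algebra, spanned by $E_{ij}-E_{ji}$ with $\lambda_i\neq\lambda_j$. I then straighten it so that the subdiagonal entries $x_{i+1,i}$ with $\lambda_i\ne\lambda_{i+1}$, together with the remaining off-tridiagonal entries, form coordinates. In such coordinates $\cT$ near $\Lambda'$ should be cut out as a union of coordinate subspaces. Specifically, $\cT_\p$ is locally $\{x_{i+1,i}=0 \text{ for } (i,i+1)\in\p,\ \text{all non-tridiagonal coordinates }=0\}$. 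Intersections of coordinate subspaces are coordinate subspaces, and their tangent spaces intersect correctly, so cleanness follows.

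\emph{Main obstacle.} The hard step is constructing these adapted coordinates: showing that near $\Lambda'$ the tridiagonal condition can be written as the vanishing of a set of coordinate functions, with the surviving subdiagonal entries serving as coordinates on each $\cT_\p$. My plan is to use the known local structure of the simple-spectrum tridiagonal isospectral manifold near a diagonal point. There the subdiagonal entries, in a suitable normalized form (the Moser/Toda coordinates given by the first components of eigenvectors), form a chart, and they vanish exactly on the faces. I would apply this blockwise to the largest blocks compatible with equal eigenvalues, and then match the charts using the fact that a simple-at-$\Lambda'$ partition determines which subdiagonal entries are allowed to be nonzero. The remaining issue is checking that the transversal (non-tridiagonal) coordinates can be chosen uniformly for all $\p$ through $\Lambda'$. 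I would settle this with an implicit-function-theorem argument applied to the map sending a matrix to its entries below the first subdiagonal, whose differential at $\Lambda'$ is surjective onto the appropriate space because $[\mathfrak m,\Lambda']$ has entries $(\lambda_j-\lambda_i)a_{ij}$.
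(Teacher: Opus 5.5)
There is a genuine gap in how you handle non-diagonal points. You claim that near a non-diagonal $X$ the set $\cT$ is locally contained in $\sl(\p_X)$, but the inclusion goes the other way. A matrix $Y\in\cT$ close to $X$ can only have \emph{fewer} vanishing subdiagonal entries than $X$, so $\p_Y\subseteq\p_X$ and $Y\in\sl(\p_Y)\supseteq\sl(\p_X)$.

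A concrete case: take $\Lambda=\{-1,-1,0,2\}$ in Example~\ref{ex:arrangement}(iii). The genus two surfaces $\cT_{\p_2}$ and $\cT_{\p_3}$ meet along the circle of matrices $\mathrm{diag}(-1,B,-1)$, with $B$ symmetric of spectrum $\{0,2\}$. Take $B$ with all four entries equal to $1$. Then $\p_X=\{(1,2),(3,4)\}$, and near $X$ the set $\cT\cap\sl(\p_X)$ is only that circle. Both surfaces leave $\sl(\p_X)$, and cleanness is precisely about their germs at $X$. This $X$ is also at distance at least $\sqrt{2}$ from every diagonal matrix, so your charts at diagonal points do not reach it. As written, the induction on blocks does not reduce the problem, and you only obtain cleanness on small neighborhoods of diagonal matrices.

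The dynamics you set aside is what closes the gap. The Toda flow acts on $\cO$ by diffeomorphisms preserving every $\cT_\p$, since $\cY$ is tangent to $\ss_\H$ and to each $\sl(\p)$. The trajectory $X(t)$ accumulates at a diagonal matrix $\Lambda''$, which lies in every component through $X$ because these components are closed and invariant. Such diffeomorphisms transport cleanness. Your chart at $\Lambda''$ shows the components as coordinate slices on a whole neighborhood of $\Lambda''$, so checking at $X(t)$ for large $t$ gives cleanness at $X$.

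The paper's proof uses the Toda trajectory in the same way, and uses no local coordinates at all. A Toda trajectory puts a diagonal matrix in any nonempty $\bigcap_i\cT_{\p_i}$. Uniqueness of the block-preserving permutation between two diagonal matrices at which $\p=\bigcap_i\p_i$ is simple then gives $\bigcap_i\cT_{\p_i}=\cT_\p$. Tangent spaces come from $\So(\p)=\bigcap_i\So(\p_i)$.

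With that repair, your local analysis at a diagonal $\Lambda'$ is a genuinely different route. The paper's adapted coordinates in Proposition~\ref{pro:adapted-coordinates} instead use Symes' map and a monomial ideal, and they also straighten the Toda field. Your step is also easier than you expect, and needs no matching of Moser-type charts:
\begin{enumerate}
\item The entries $X_{ij}$ with $i>j$ and $\lambda_i\neq\lambda_j$ are themselves coordinates on $\cO$ at $\Lambda'$, since the differential on $\mathfrak{m}$ is $a_{ij}\mapsto(\lambda_i-\lambda_j)a_{ij}$.
\item For $\p\in\mathfrak{M}(\Lambda')$, $\cT_\p$ lies in the slice where all these coordinates vanish except the $X_{k+1,k}$ with $(k,k+1)\notin\p$.
\item Simplicity of $\p$ at $\Lambda'$ makes each such $X_{k+1,k}$ a coordinate, so this slice has dimension $n-1-|\p|=\dim\cT_\p$. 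Compactness and invariance of domain then give equality near $\Lambda'$.
\item Continuity of block spectra, which take values in finitely many sub-multisets of the spectrum, puts every point of $\cT$ near $\Lambda'$ on one of these components.
\end{enumerate}
One minor slip: in your first step, a simple partition \emph{refines} a maximal one, not the other way round.
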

\begin{proof}
 Let $X\in \cT$. Because $X\in \cT_{\p_X}$,
 \eqref{eq:array} holds.

Partitions of $\sl$ can be intersected. So simple partitions at $\Lambda$ can. However, if $\p_1,\dots,\p_s$ are simple partitions of $\cT$, then
\[\p=\p_1\cap \cdots \cap \p_s\in \mathfrak{P}\Longleftrightarrow \cT_{\p_1}\cap\cdots \cap \cT_{\p_s}\neq \emptyset.\]
To substantiate the equivalence, note that the statement in the left hand side means that $\Lambda\in \cT$ diagonal exists such that $\p_i=\p_i(\Lambda)$, so the implication from left to right is clear. If $X\in \cap_i\cT_{\p_i}$, then the closure of its Toda trajectory contains a diagonal matrix $\Lambda$.

Regarding the clean intersection property,  if $\Lambda$ and $\Lambda'$ belong to $\cap_i\cT_{\p_i}$, then $\p$ is simple at both. Therefore,  there is a unique permutation that preserves each block of $\p$ whose transpose takes   $\Lambda$ to $\Lambda'$, and this implies
\[\bigcap_i\cT_{\p_i}=\cT_{\p}.\]
The right hand side is a tridiagonal isospectral manifold, and, therefore, a submanifold of the orthogonal conjugacy class $\cO$.

The statement for tangent spaces is a consequence of the equality $\So(\p)=\cap_i \So(\p_i)$.  Vectors in $T_X\cT_{\p}$ are represented by curves in $\So(\p)$ starting at the identity that upon (anti)conjugation at $X$ produce a curve in $\ss_\H$. Such a curve  belongs  to all $\So(\p_i)$, so
$T_X\cT_\p= \cap_i T_X\cT_{\p_i}$.
\end{proof}

\begin{example}\label{ex:arrangement}
We describe the maximal partitions of the tridiagonal isospectral  set $\cT$ and its submanifold components in some low dimensional cases.
\begin{enumerate}[(i)]
 \item $\{-2,1,1\}$: \hskip .2cm $\cT=\mathbb{S}^1\cup \mathbb{S}^1$
 \[\p_1=\{-2,1\},\{1\},\quad \p_2=\{1\},\{-2,1\}\]
 \begin{center}
\begin{figure}[H]
\includegraphics[scale=1.4]{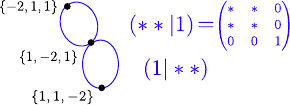}
 \caption{The isospectral set for $\Lambda=\{-2,1,1\}$.}
\end{figure}
\end{center}
 \item $\{-1,-1,1,1\}$:\hskip .2cm $\cT=\mathbb{S}^1\cup \mathbb{T}^2\cup \mathbb{S}^1$
 \[\p_1=\{-1\},\{-1,1\},\{1\},\quad \p_2=\{-1,1\},\{-1,1\},\quad \p_3=\{1\},\{-1,1\},\{-1\}\]
 \begin{center}
\begin{figure}[H]
\includegraphics[scale=1.4]{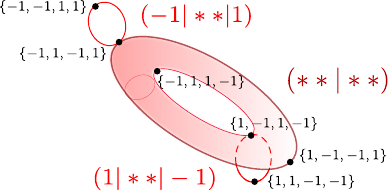}
\caption{The isospectral set for $\Lambda=\{-1,-1,1,1\}$.}
\end{figure}
\end{center}
  \item $\{-1,-1,0,2\}$:
  \[ \xymatrix @C=.01cm @R=.25cm{ &  \mathbb{T}^2 & && & &  & &  \p_1=\{-1,0\},\{-1,2\}  &      \\
\Sigma & \bigcup  & \Sigma & & & & & \p_2=\{-1\},\{-1,0,2\} &  & \p_3=\{-1,0,2\},\{-1\}\\
 & \mathbb{T}^2 & & & & &   &  & \p_4=\{-1,2\},\{-1,0\} &
}\]

  The two tori have empty intersection, each intersects a genus two surface $\Sigma$ in a circle, the two genus two surfaces also intersect in a circle, and the diagonal matrices corresponding to $\{-1,0,2,-1\},\{-1,2,0,-1\}$ are triple intersection points.
\end{enumerate}
\end{example}
\subsection{Adapted coordinates}\label{ssec:adapted-coordinates}
We construct local coordinates for the orthogonal conjugacy class $\cO$ around $\Lambda$ adapted to $\cT$,  meaning that $\cT$ corresponds to an arrangement of vector subspaces. The key properties of these adapted local coordinates are, on the one hand, the compatibility with the Toda vector field and, on the other hand, that this is an arrangement  of coordinate subspaces. Each coordinate subspace  corresponds to a submanifold component through $\Lambda$, which in turn corresponds to a maximal simple partition at $\Lambda$,
\[\mathfrak{M}(\Lambda)\ni \p\longleftrightarrow \cT_\p.\]

Let $\L$ denote unit lower triangular matrices.
A known consequence of Symes' factorization is
that  the map
\[\psi: \L\to  \cO,\quad L\mapsto \kappa(L)^T\Lambda \kappa(L),\quad \kappa:\Sl\to \So,\]
 takes the the trajectories of the action of $\exp(t\Lambda)$ on $\L$ by conjugation to the trajectories of the Toda vector field. For non-simple spectrum this is not a chart for $\cO$:
The kernel of the differential of $\psi$ at the identity $\mathrm{I}$ is the subspace  strictly lower triangular matrices which commute with $\Lambda$.

To obtain a chart for $\cO$ we restrict $\psi$ to a suitable subspace. Let  $\lambda_i$ be the entry of $\Lambda$ in the position $(ii)$. We define the following affine subspaces of the space $\L$ of unit lower triangular matrices,
\begin{equation}\label{eq:chart-domain}
 \L^\Lambda=\{L\in \L\,|\, L_{ij}=0,\,\,\mathrm{if}\,\,\lambda_i=\lambda_j,\,i\neq j\},\quad \L^\Lambda_\H=\L^\Lambda\cap \H.
\end{equation}
Because the tangent space of $\L^\Lambda$ at $\mathrm{I}$ is complementary to the kernel of the differential of $\psi$, the restriction $\psi: \L^\Lambda\to  \cO$ is a local diffeomorphism around $\mathrm{I}$.

Next, we want to characterize $\psi^{-1}(\cT)\subset \L^\Lambda$. To that end we denote by   $\nu:\So\to \U$ be the projection onto the second factor of the $\mathrm{QR}$ factorization, and by $\ll$ the strictly lower triangular matrices.
We have a commutative triangle
\begin{equation}\label{cd:triangle} \xymatrix{  X=\kappa(L)^T\Lambda \kappa(L)\in  \cO \ar[rd]\ar[rr]^{\psi^{-1}} &     &\ar@{|->}[ld]\L^\Lambda\ni L \\
 &  \nu(L)^{-1}X\nu(L)\in \Lambda+\ll\ni L^{-1}\Lambda L  &
}
\end{equation}
Because $\H$ is stable by conjugation by $\U$ it follows that
\[X\in \ss_\H\Longleftrightarrow \nu(L)^{-1}X\nu(L)=L^{-1}\Lambda L\in \Lambda+(\ll\cap \H).\]
Therefore the restriction to $\psi^{-1}$ to $\cT$  around $\Lambda$ has image
\[\{L\in \L^\Lambda\,|\, L^{-1}\Lambda L\in \Lambda+(\ll\cap \H)\}.\]
For any $L\in \L$ the entries of $Z=L^{-1}\Lambda L$ are
\begin{equation}\label{eq:conj-diagonal}
 Z_{ij}=(\lambda_i-\lambda_j)L_{ij}+\sum_{i<l_1\cdots <l_s< j}(-1)^{s+1} (\lambda_i-\lambda_{l_1})L_{i,l_1}L_{l_1,l_2}\cdots L_{l_s,j}.
\end{equation}
The subset  $\psi^{-1}(\cT)\subset \L^\Lambda$ is not a collection of subspaces, but as we shall see, it is a graph over a collection of coordinate subspaces.

 By a {\bf repeated pair} for $\Lambda$ we mean
 a pair of indices $(i,j)$, $i<j$, such that  $\lambda_i=\lambda_j$.
Repeated pairs for $\Lambda$ can be nested, and  we are interested in  innermost repeated pairs
\[(i_1,j_1),\dots ,(i_a,j_a),\quad i_1<\cdots <i_a,\quad j_{l+1}>j_l.\]
The monomial ideal associated to $\Lambda$ is generated by
\begin{equation}\label{eq:monomial}\prod_{l=0}^{j_1-i_1-1}L_{i_1+1+l,i_1+l}=0\,,\dots\,,\prod_{l=0}^{j_a-i_a-1}L_{i_a+1+l,i_a+l}=0.
 \end{equation}
Its zero set, denoted by $\T^\Lambda$, is an arrangement of coordinate subspaces.

\begin{proposition}\label{pro:adapted-coordinates}
There exists unique constants $c_{ij}\in \R$, so that the diffeomorphism
\[\phi:\L^\Lambda\to \L^\Lambda,\qquad L_{ij}\mapsto \begin{cases}L_{ij},&\quad {i-j}=1\\
                L_{ij}-c_{ij}\prod_{i\leq l< j}L_{l+1,l}&\quad i-j\neq 1\end{cases},\]
 composed by the right with $\psi^{-1}$,
\[\Psi^{-1}=\phi\circ \psi^{-1}:\cO\to \L^\Lambda,\] has the following properties:
\begin{enumerate}
 \item it defines local coordinates for $\cO$ around $\Lambda$ adapted to $\cT$;
 \item it sends $\cT$ to $\T^\Lambda$, which is an arrangement of coordinate subspaces of $\L^\Lambda_\H$ whose components are in correspondence with $\mathfrak{M}(\Lambda)$,
\[\T^\Lambda=\bigcup_{\p\in \mathfrak{M}(\Lambda)}\T_{\p},\quad \T_\p=\T^\Lambda\cap \sl(\p);\]
 \item the restriction of $\Psi^{-1}$ to $\cT_\p$ takes the Toda vector field to  $L'=[L,-\Lambda]$.
\end{enumerate}
%
\end{proposition}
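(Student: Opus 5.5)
The plan is to analyze the equations cutting out $\psi^{-1}(\cT)$ in $\L^\Lambda$, namely $Z_{ij}=0$ for $i-j\geq 2$, where $Z=L^{-1}\Lambda L$ is given by \eqref{eq:conj-diagonal}. I will show they can be solved triangularly for the entries $L_{ij}$ with $i-j\geq 2$ and $\lambda_i\neq\lambda_j$, in terms of the subdiagonal entries. I also want to show the remaining constraints collapse to the monomial equations \eqref{eq:monomial}.

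The first step is an induction on the distance $i-j$ (here, following the sign conventions of \eqref{eq:conj-diagonal}, the paths run through intermediate indices between $j$ and $i$).
\begin{itemize}
\item For $\lambda_i\neq\lambda_j$, the equation $Z_{ij}=0$ expresses $(\lambda_i-\lambda_j)L_{ij}$ as a polynomial in entries of strictly smaller distance.
\item For a repeated pair, $L_{ij}=0$ holds by definition of $\L^\Lambda$, and $Z_{ij}=0$ becomes a genuine constraint.
\end{itemize}
On the solution locus, I will prove by induction that each solved entry has the form
\[L_{ij}=c_{ij}\prod_{j\le l<i}L_{l+1,l}+(\text{terms vanishing on }\T^\Lambda\text{ or of higher order}).\]
Here the constant $c_{ij}$ is determined recursively by the weights $(\lambda_i-\lambda_{l})$ along the chain. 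A cleaner way to organize this is by weights: $\Lambda$ acts on $\L^\Lambda$ by conjugation with $L_{ij}$ of weight $\lambda_i-\lambda_j$. The Toda-compatible coordinates should be exactly those in which this linear action, $L'=[L,-\Lambda]$, is diagonal. The only monomials in the subdiagonal entries with the same weight as $L_{ij}$ are the chain products $\prod L_{l+1,l}$. This is why the correction $\phi$ involves only such products, and it gives uniqueness of the $c_{ij}$: any other choice breaks either the graph property or linearization of the flow.

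The second step is to identify the constraints at repeated pairs. For an innermost repeated pair $(i,j)$, after substitution the equation $Z_{ij}=0$ should reduce to a nonzero constant times $\prod_{l}L_{l+1,l}$ over the chain from $j$ to $i$. The constant should be nonzero precisely because the eigenvalues strictly between $j$ and $i$ differ from $\lambda_i$, which is what innermost means. For non-innermost repeated pairs, the product contains the product of a nested innermost pair, so the constraint is implied. I expect this computation, showing the constant is nonzero and that no extra components appear, to be the main obstacle. I would try first to obtain it from the weight argument: after applying $\phi$, the locus $\Psi^{-1}(\cT)$ is invariant under the linear diagonal flow. It is therefore a union of weight-homogeneous pieces, which pins the equations down to monomials.

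With this, $\Psi^{-1}(\cT)$ is the graph described, and $\phi$ straightens it to $\T^\Lambda$ inside the coordinate space $\L^\Lambda_\H$ of subdiagonal entries, proving (1) and (2). The components of $\T^\Lambda$ correspond to choosing which subdiagonal entries to kill so that every innermost repeated pair is broken minimally. These choices are exactly the maximal partitions simple at $\Lambda$, and $\T_\p=\T^\Lambda\cap\sl(\p)$ then matches $\cT_\p$ by Proposition \ref{lem:arrangement}. For (3), Symes' theorem says $\psi$ intertwines conjugation by $\exp(t\Lambda)$ with the Toda flow. Since $\phi$ is built from weight-homogeneous corrections, it commutes with that linear flow, so $\Psi^{-1}$ carries the Toda field on $\cT_\p$ to $L'=[L,-\Lambda]$.
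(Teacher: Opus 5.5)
Your outline matches the paper's: solve $Z_{ij}=0$ by induction on $i-j$, then read off monomial constraints at innermost repeated pairs. Your argument for (3) is fine, since the chain $\prod_{j\le l<i}L_{l+1,l}$ has the same $\Lambda$-weight as $L_{ij}$, so $\phi$ commutes with $L'=[L,-\Lambda]$.

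The gap is in (1)--(2). Because $\phi$ subtracts only the single term $c_{ij}\prod_{j\le l<i}L_{l+1,l}$, you need two exact facts on $\psi^{-1}(\cT)$. First, $L_{ij}=c_{ij}\prod_{j\le l<i}L_{l+1,l}$ must hold \emph{exactly}. Second, the equation at an innermost repeated pair must be \emph{exactly} a nonzero constant times its chain monomial. Your induction explicitly allows a ``higher order'' remainder, and any such term would keep $\phi\circ\psi^{-1}(\cT)$ out of $\L^\Lambda_\H$.

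The weight argument you propose to remove that remainder fails. Since $\Lambda$ is a fixed matrix, the chain products are not the only monomials in the subdiagonal variables with weight $\lambda_i-\lambda_j$. For $\Lambda=\mathrm{diag}(-2,1,1)$, the variables $L_{21}$ and $L_{31}$ both have weight $3$, whereas in fact $L_{31}=0$ on $\psi^{-1}(\cT)$. Whenever $\lambda_1,\lambda_2,\lambda_3$ are in arithmetic progression, both $\{L_{31}=L_{21}^2\}$ and $\{2L_{31}=L_{21}L_{32}\}$ are invariant under the linear flow. So invariance under $L'=[L,-\Lambda]$ does not force monomial equations. It does not determine the $c_{ij}$ either, since every choice of constants gives a $\phi$ commuting with the flow. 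And no weight argument can show that the constant at an innermost pair is nonzero.

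The missing step is an exact computation, which is what the paper's induction relies on. Write $Z=L^{-1}\Lambda L$ as $\Lambda L=LZ$, with $Z-\Lambda$ supported on the subdiagonal. Comparing entries gives $Z_{j+1,j}=(\lambda_{j+1}-\lambda_j)L_{j+1,j}$ and
\[(\lambda_i-\lambda_j)L_{ij}=(\lambda_{j+1}-\lambda_j)\,L_{j+1,j}\,L_{i,j+1},\qquad i\geq j+2.\]
By induction, every $L_{ij}$ with $\lambda_i\neq\lambda_j$ is then a constant (possibly zero) times $\prod_{j\le l<i}L_{l+1,l}$ on the locus, with no remainder. Equivalently, in \eqref{eq:conj-diagonal} each path term becomes a constant times the same full chain once you substitute the entries already solved, because chains concatenate.

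At an innermost repeated pair ($\lambda_i=\lambda_j$, $i\geq j+2$), the equation becomes $(\lambda_{j+1}-\lambda_j)L_{j+1,j}L_{i,j+1}=0$. This is $K\prod_{j\le l<i}L_{l+1,l}=0$ with $K=\prod_{m=j}^{i-2}(\lambda_{m+1}-\lambda_m)/\prod_{m=j+1}^{i-2}(\lambda_i-\lambda_m)$. Here $K\neq 0$ because $\lambda_j,\dots,\lambda_{i-1}$ are pairwise distinct and $\lambda_i\notin\{\lambda_{j+1},\dots,\lambda_{i-1}\}$.

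If you want to keep a weight argument, replace $\exp(t\Lambda)$ by the full diagonal torus. Conjugation by an invertible diagonal $D$ preserves $\L^\Lambda$ and the condition $L^{-1}\Lambda L\in\Lambda+(\ll\cap\H)$. The subdiagonal variables then have linearly independent weights $e_{l+1}-e_l$, so the chain is the unique monomial of weight $e_i-e_j$. This gives exactness, but $K\neq 0$ still requires the computation above.
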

\begin{proof}
The constant $c_{ij}$ is determined by setting $Z_{ij}=0$ in \eqref{eq:conj-diagonal} and expressing $L_{ij}$ as a monomial in subdiagonal variables, using induction on $i-j$.
For $i-j=1$ we just have $L_{i+1,i}=0$ if $(i,i+1)$ is a repeated pair, expressing that subdiagonal variables not in $\L^\Lambda$ (the variables not in $\L^\Lambda_\H$) must be zero. We assume that we have solved the equation  for $i-j=k$, meaning that the variables corresponding to repeated pairs are zero, and the other ones are monomials in non-zero subdiagonal variables.  For $i-j=k+1$ we have two possibilities,
\begin{itemize}
\item if $(i,j)$ is a repeated pair, then we set $c_{ij}=0$.
 \item if $(i,j)$ is not a repeated pair, then we can solve the equation and $c_{ij}$ is a rational function on differences of distinct eigenvalues.
\end{itemize}
This choice of constants defines a  diffeomorphism $\phi$ of $\L$ that sends $\L^\Lambda$ to itself.

If $\pr_\H:\L^\Lambda\to \L^\Lambda_\H$ denotes the orthogonal projection, then
\begin{equation}\label{eq:Toda-proj}\phi\circ \psi^{-1}(\cT)=\pr_\H\circ \psi^{-1}(\cT).
 \end{equation}
The reason is that the image $\psi^{-1}(\cT)$ is the solution set of $Z_{ij}=0$ in  \eqref{eq:conj-diagonal}. If $(i,j)$ is not a repeated pair, then
\[L_{ij}=c_{ij}\prod_{i\leq l< j}L_{l+1,l},\]
so by construction $\phi$ projects $\psi^{-1}(\cT)$ orthogonally into $\L^\Lambda_\H$.
The image of the projection is described by the innermost repeated pairs. If $(i,j)$ is one such pair, then
\[ c_{ij}=\frac{\prod_{l=0}^{i-j-2}(\lambda_{j+l}-\lambda_{j+l+1})}{\prod_{l=0}^{i-j-2}(\lambda_{j+l}-\lambda_i)}\neq 0,\]
and the monomial equation \[\prod_{l=0}^{j-i-1}L_{i+1+l,i+l}=0,\]
is satisfied by $\Psi^{-1}(\cT)$. If the repeated pair is not innermost, the inductive expression of $L_{ij}$ on subdiagonal variables is of the form
\[L_{ij}=d_{ij}\prod_{l=0}^{j-i-1}L_{i+1+l,i+l}.\]
Because $(i,j)$ is not innermost a factor of the monomial is already zero, so it imposes no additional constraint.  Therefore around $\Lambda$
\[\Psi^{-1}(\cT)=\T^\Lambda.\]
A maximal simple partition at $\Lambda$ corresponds to a minimal choice of subdiagonal variables that appear in all monomials in the ideal of $\Lambda$ \eqref{eq:monomial}. This proves (1) and (2).

As pointed out, the map $\psi$ takes the diagonal linear vector field
\[L'=[L,-\Lambda],\,L\in \L,\]
to the Toda vector field on $\cO$. Because each each variable $L_{ij}$ in $\L$ evolves independently, the diagonal linear vector field is tangent to $\L^\Lambda$ and the projection $\pr_\H$ intertwines its trajectories. This, together with the identity \eqref{eq:Toda-proj}, proves (3).
\end{proof}
\begin{remark}\label{rem:essential-property}
 The map $\psi:\L^\Lambda\to \cO$ is not just a local diffeomorphism but a diffeomorphims everywhere defined in $\L^\Lambda$ (whose image is a dense open subset of $\cO$) \cite{MT3}, and so $\Psi:\T^\Lambda\to \cT$ is.  We do not need this property as our considerations will be local around $\Lambda$.
\end{remark}

\section{The homology of the tridiagonal isospectral set}\label{sec:homology}
In this section we use a classical Lyapunov Morse function on $\cT$ for the opposite of the  Toda vector field $-\cY$ to produce a standard homotopy near critical points.
In  Section \ref{ssec:homology} we use this function to build a CW-complex structure on $\cT$ that allows for a combinatorial description of its integral homology.


The Toda vector field on $\cO$ is tangent to $\cT$. We regard its restriction to $\cT$ as a vector field on $\cT$, in the sense that it is a (smooth) vector field on every submanifold component.
The linear function
\begin{equation}\label{eq:Lyapunov} f(X)=\sum_{i}a_iX_{ii},\quad a_{i+1}>a_{i},\quad a_i>0,
 \end{equation}
 is a  known Lyapunov Morse function for  $-\cY$ on symmetric matrices, and on any orthogonal conjugacy class $\cO$ \cite[Lemma 1]{Fa}. Its critical points on $\cO$ are  diagonal matrices.

Because the diffeomorphism in Proposition \ref{pro:adapted-coordinates} used to produce $\Psi$ out of $\psi$ is tangent to the identity, the Hessian of the pullback of $f$ by $\Psi$ and by $\psi$ agree.  This Hessian is the quadratic form
 \begin{equation}\label{eq:Hessian}\sum (a_i-a_j)(\lambda_j-\lambda_i)L_{ij}^2,\quad L\in \L^\Lambda.
  \end{equation}

We are going to modify slightly $f$ on $\cT$  near each critical point $\Lambda$ to agree  with its Hessian in  the adapted local coordinates. We pull  $f$ back by  $\Psi:\L^\Lambda\to \cO$ and then by the inclusion $\L^\Lambda_\H\to \L^\Lambda$. The result is the auxiliary function  $F_\Lambda:\L^\Lambda_\H\to \R$ on the coordinate subspace $\L^\Lambda_\H$ which strictly contains
\[\T^\Lambda=\bigcup_{\p\in \mathfrak{M}(\Lambda)}\T_{\p}.\]
By using a radial bump function supported in an annulus with radii $r,2r$, for small enough $r$,  we may assume that $F_\Lambda$ equals its quadratic expansion at $\Lambda$,
\begin{equation}\label{eq:Hessian2}
F_\Lambda(\Lambda)+\sum_{\lambda_i\neq \lambda_{i+1}} (a_{i+1}-a_i)(\lambda_{i}-\lambda_{i+1})L_{i+1,i}^2.
\end{equation}
Because each $\T_\p$ is a coordinate subspace, the radial coordinate restricts to the radial coordinate on  $\T_{\p}$.
This means that ${F_\Lambda}|_{\T_\p}$ near the origin equals the quadratic expansion of the pullback of $f$ by $\Psi:\T_\p\to \cT_\p$. We do this at every critical point. We abuse notation and denote still by $f:\cT\to \R$ the modification of the original $f$ than now on $\T_\p$ is a quadratic form near the origin. The result is a  Morse function on $\cT$, in the sense that it is a Morse function on each submanifold component. It is also a Lyapunov function for $-\cY$ on each submanifold component. In local coordinates the difference of the original $F_\Lambda$ with its quadratic expansion is of order 3. The bump function has differential bounded by $2/r$, since it is supported in an annulus of width $r$.


We introduce the auxiliary diagonal linear  vector field on $\L^\Lambda_\H$,
\[-Y=\sum_i (\lambda_{i}-\lambda_{i+1})L_{i+1,i}\frac{\partial}{\partial L_{i+1,i}}.\]
The subspace $\L^\Lambda_\H$ splits into the stable and unstable subspaces for $-Y$,
\[\L^\Lambda_\H=W^s(\Lambda)\oplus W^u(\Lambda).\]
If $e\subset W^s(\Lambda)$ is a suitable small ball, then we can perform the standard Milnor's homotopy  that takes $F_\Lambda^{-1}((-\infty,F_\Lambda(\Lambda)+2\epsilon])$ to  $F_\Lambda^{-1}((-\infty,F_\Lambda(\Lambda)-\epsilon])\cup e$, for appropriate $\epsilon>0$.

\begin{lemma}\label{lem:Milnor} For any $\epsilon>0$ small enough  the standard homotopy induces  a homotopy on $\cT$ supported near $f^{-1}(f(\Lambda))$,
\begin{equation}\label{eq:standard-homotopy}
  h_\Lambda:f^{-1}((-\infty, f(\Lambda)+2\epsilon])\to f^{-1}((-\infty, f(\Lambda)-\epsilon])\cup \bigcup_{\p\in \mathfrak{M}(\Lambda)}e_{\p},
\end{equation}
where $e_{\p}=e\cap \T_{\p}$.
\end{lemma}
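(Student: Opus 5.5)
The approach is to carry out Milnor's standard homotopy in the adapted chart $\Psi:\L^\Lambda_\H\supset U\to \cO$ of Proposition \ref{pro:adapted-coordinates}, and then to check that it preserves each coordinate subspace $\T_\p$. By Proposition \ref{pro:adapted-coordinates}(2), $\Psi^{-1}(\cT)=\T^\Lambda=\bigcup_{\p\in\mathfrak{M}(\Lambda)}\T_\p$ near the origin. Each $\T_\p$ is a coordinate subspace of $\L^\Lambda_\H$, spanned by some subdiagonal variables $L_{i+1,i}$. Since $\T_\p\subset \sl(\p)$, it is also a linear subspace.

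After the bump-function modification, $F_\Lambda$ on the ball of radius $r$ is exactly the diagonal quadratic form \eqref{eq:Hessian2}. The stable/unstable splitting for $-Y$ is a splitting into coordinate subspaces, and it therefore restricts to each $\T_\p$:
\[\T_\p=(W^s(\Lambda)\cap\T_\p)\oplus(W^u(\Lambda)\cap \T_\p).\]

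\textbf{Step 1: the local deformation.} Milnor's deformation for a nondegenerate quadratic form $-|x|^2+|y|^2$ (with $x\in W^s$, $y\in W^u$, in suitably rescaled coordinates) is an explicit formula. It collapses the sublevel set $F_\Lambda\le F_\Lambda(\Lambda)+2\epsilon$ onto $\{F_\Lambda\le F_\Lambda(\Lambda)-\epsilon\}\cup e$. Outside a small neighbourhood of the origin it agrees with a flow along the gradient-like field $-Y$, and elsewhere it is given by formulas that only rescale the $x$- and $y$-components. Since $-Y$ is diagonal and Milnor's formulas act by scalar multiplications on the $x$- and $y$-blocks, every coordinate subspace spanned by a subset of the variables is invariant under the homotopy. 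Hence the homotopy maps $\T_\p$ into itself. Restricted to $\T_\p$, it is exactly Milnor's homotopy for $F_\Lambda|_{\T_\p}$, which sends the sublevel set onto the lower sublevel set union $e\cap\T_\p=e_\p$. Taking the union over $\mathfrak{M}(\Lambda)$ gives the local statement on $\T^\Lambda$.

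\textbf{Step 2: globalizing.} We choose $\epsilon$ small compared with $r^2$, so that the part of $f^{-1}([f(\Lambda)-\epsilon,f(\Lambda)+2\epsilon])$ requiring the nonlinear part of Milnor's formula lies inside the ball of radius $r$ where $F_\Lambda$ is quadratic. Away from that ball, the homotopy is the flow of a gradient-like vector field $-\cY$ on each submanifold component, reparametrized by $f$. The Toda field is tangent to every $\cT_\p$ and agrees with $-Y$ in the chart on each component by Proposition \ref{pro:adapted-coordinates}(3), so the flow also preserves $\cT$ and every component.

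We also need $f(\Lambda)$ to be the only critical value in $[f(\Lambda)-\epsilon,f(\Lambda)+2\epsilon]$. For this we use that there are finitely many diagonal matrices, and perturb the $a_i$, or treat equal critical values simultaneously with disjoint charts. We must further check that $f$ remains Lyapunov for $-\cY$ after the bump modification. This holds because the error is $O(|L|^3)$ with differential $O(r^2)$, plus $(2/r)\cdot O(r^3)=O(r^2)$. This is dominated by the gradient of the quadratic part, which is of order $|L|\sim r$ on the annulus, provided $r$ is small.

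\textbf{Main obstacle.} The delicate point is the matching in the transition region. The flow lines of $-\cY$ must be pasted with Milnor's local formula compatibly on all components at once, and the components meet only along common coordinate subspaces. I would handle this by constructing the homotopy on the ambient $\L^\Lambda_\H$ near the origin, where everything is linear and diagonal, and by the reparametrized $-\cY$ flow on all of $\cO$ elsewhere. Restricting to $\cT$ then gives a well-defined map, since both pieces preserve each $\T_\p$, respectively each $\cT_\p$. Continuity on $\cT$ follows from continuity on $\cO$.
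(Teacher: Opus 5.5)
Your proposal is correct and follows essentially the paper's route: Milnor's homotopy for the diagonal quadratic form in the adapted chart, invariance of each coordinate subspace $\T_\p$ under the diagonal linear flow and under the rescaling of the unstable block, and globalization by the Toda flow, which is tangent to every component $\cT_\p$. The one point to tidy is the gluing: $\Psi(\L^\Lambda_\H)$ is not open in $\cO$, so define the homotopy directly on $\cT$ as the paper does, and get continuity by pasting over the finitely many closed components $\cT_\p$, on each of which it is the usual Milnor homotopy; also, your requirement $\epsilon\ll r^2$ is the right scaling for the thickened cell to lie in the ball of radius $r$.
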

\begin{proof}
To ease the notation, we  assume that $F_\Lambda$ vanishes at $0$ so it is a quadratic form.
We rename the subdiagonal coordinates on $\L^\Lambda_\H$. We denote by $S_k$ (resp. $U_l$) the rescaling of the stable (resp. unstable) coordinates for $-Y$, so that balls in the new coordinates correspond to ellipsoids with axis length given by the norm of the coefficients  of $F_\Lambda$  in \eqref{eq:Hessian2}.
For $\epsilon>0$ we consider  the level set $F_{\Lambda}^{-1}(-\epsilon)$ and the cell
\[e=\{(S,U)\,|\,U=0,\,\sum_k {S_k^2}\leq \epsilon\}.\]
The union  $F_\Lambda^{-1}((-\infty,-\epsilon])\cup e$ is homotopic to $F_\Lambda^{-1}((-\infty,2\epsilon])$. Milnor's standard homotopy is constructed as follows \cite{Mi}:
\begin{itemize}
 \item we thicken the cell to
\[\mathcal{E}=\left\{(S,U)\,|\sum_l {U_l^2}\leq \epsilon,\,\sum_k S_k^2\leq \epsilon+\sum_l U_l^2\right\},\]
that gives rise  to the manifold with corners
\[\{(U,S)\in F_\Lambda^{-1}(-\epsilon)\,|\, \sum_k{S_k^2}\geq 2\epsilon\}\cup \{(U,S)\in \mathcal{E}\,| \sum_l U_l^2=\epsilon\};\]
\item the auxiliary vector field $Y$ is transverse to the previous manifold with corners
and any trajectory of $Y$ starting at $F_\Lambda^{-1}(2\epsilon)$ intersects it (transversely);
\item we let
\[h_1:F_\Lambda^{-1}((-\infty,2\epsilon])\times [0,1]\to F_\Lambda^{-1}((-\infty,-\epsilon])\cup \mathcal{E}\]
be the homotopy that is the identity on $F_\Lambda^{-1}((-\infty,-\epsilon]\cup \mathcal{E}$, and that at any point $L$ not there  traverses at a constant speed the  integral curve of $Y$  from $L$ to the manifold with corners;
\item the thickening is collapsed back to the core by means of  the homotopy
\[h_2:F_\Lambda^{-1}((-\infty,-\epsilon])\cup \mathcal{E}\to F_\Lambda^{-1}((-\infty,-\epsilon])\cup e,\]
 that is the identity on $F_\Lambda^{-1}((-\infty,-\epsilon]\cup e$, and that at any point $L$ not there  traverses at a constant speed the unique closed segment from $L$  to $F_\Lambda^{-1}(-\epsilon)\cup e$ tangent to $\sum_l -U_l\frac{\partial}{\partial U_l}$;
\item the standard homotopy for $F_\Lambda$  is the concatenation  $h=h_1*h_2$.
\end{itemize}
We claim that any coordinate subspace $\T_\p\subset \L^\Lambda_\H$ is invariant by $h_1$ and $h_2$: First,  $-Y$ restricts to $\T_\p$ to the pullback by $\Psi$ of the Toda vector field. Second,  the auxiliary vector field $\sum_l -U_l\frac{\partial}{\partial U_l}$ is diagonal linear and thus it is tangent to any coordinate subspace.

If  $\epsilon <cr$, for some small $c$, then the points that $h_1$ sends to $\mathcal{E}$ lie inside the ball of radius $r$. By considering the subset
\[\Psi(\mathcal{E}\cap \T^\Lambda)\subset \cT,\] we can define $h_1$ on $\cT$ by the same recipe that uses now $-\cY$. We can also  define $h_2$ on $\cT$ by means of $\Psi$. The result is a  homotopy on $\cT$ supported near $f^{-1}(f(\Lambda))$,
\[
  h_\Lambda:f^{-1}((-\infty, f(\Lambda)+2\epsilon])\to f^{-1}((-\infty, f(\Lambda)-\epsilon])\cup \bigcup_{\p\in \mathfrak{M}(\Lambda)}e_{\p},\quad e_{\p}=e\cap \T_{\p}.
\]
(We use the same notation for  $e_{\p}\subset \T_{\p}$ and for its image by $\Psi$).
\end{proof}
\subsection{The $\mathrm{CW}$-complex structure of the tridiagonal isospectral set}\label{ssec:homology}
The Morse function $f:\cT\to\R$ allows to build $\cT$ by attaching at each critical point a cell for each submanifold component through it. Some combinatorics shows that just one cell is attached, this providing a CW-complex structure on $\cT$ from which its homology can be computed.

A coordinate axis $L_{i+1,i}$ is in $W^s(\Lambda)\subset \L^\Lambda_\H$ if $\lambda_{i}<\lambda_{i+1}$, i.e., if $(i,i+1)$ is an ascent for $\Lambda$. Upon intersecting $\L^\Lambda_\H=W^s(\Lambda)\oplus W^u(\Lambda)$ with $\T_{\p}$, we obtain
\[\T_{\p}=W^s(\p)\oplus W^u(\p),\quad \p\in \mathfrak{M}(\Lambda),\]
the splitting into unstable and stable subspaces for the restriction of $-Y$ to $\T_{\p}$.

Let $\Lambda^+,\Lambda^-\in \cT$ be the diagonal matrices with eigenvalues  ordered increasingly and decreasingly, respectively. These are  a (the) source and a (the) sink for the Toda vector field on $\cT$.

\begin{lemma}\label{lem:runs} The unstable and stable subspaces at $\Lambda$ have the following properties:
\begin{enumerate}
\item  There exist $\p_M,\p_m\in \mathfrak{M}(\Lambda)$ such that
 \begin{equation}\label{eq:max-unstable}W^s(\p_M)=W^s(\Lambda), \quad W^u(\p_m)=W^u(\Lambda).
  \end{equation}
 In particular for  any $\p\in \mathfrak{M}(\Lambda)$
 \begin{equation}\label{eq:stable-inclusion}
 W^s(\p)\subset W^s(\p_M),\quad W^u(\p)\subset W^u(\p_m).
 \end{equation}
\item  All $W^u(\p)$ (resp. $W^s(\p)$) are trivial if and only if $\Lambda$ is the maximum $\Lambda^+$
(resp. minimum $\Lambda^-$) for $f$ in $\cT$.
 \end{enumerate}

 \end{lemma}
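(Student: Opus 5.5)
The plan is to work entirely in the adapted coordinates of Proposition \ref{pro:adapted-coordinates}. There, $\L^\Lambda_\H$ has coordinates $L_{i+1,i}$ with $\lambda_i\neq\lambda_{i+1}$. By the remark preceding the lemma, $W^s(\Lambda)$ is spanned by the axes with $\lambda_i<\lambda_{i+1}$ (ascents), and $W^u(\Lambda)$ by the axes with $\lambda_i>\lambda_{i+1}$ (strict descents). For $\p\in\mathfrak{M}(\Lambda)$, $\T_\p$ is the coordinate subspace spanned by the axes $L_{i+1,i}$ with $(i,i+1)$ not a cut position of $\p$. Since $-Y$ is diagonal linear, $W^s(\p)=W^s(\Lambda)\cap\T_\p$ and $W^u(\p)=W^u(\Lambda)\cap\T_\p$. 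The lemma thus reduces to the combinatorics of which cut sets are allowed.

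For (1), I will construct $\p_M$ explicitly. Consider the partition $\p_0$ of $\sl$ that cuts at every position $(i,i+1)$ with $\lambda_i\geq\lambda_{i+1}$. Its blocks are the maximal strictly increasing runs of the list $\{\lambda_1,\dots,\lambda_n\}$, so the eigenvalues within each block are distinct, and $\p_0\in\mathfrak{P}(\Lambda)$. Any partition coarser than $\p_0$ has fewer cuts, so I take a maximal element $\p_M\in\mathfrak{M}(\Lambda)$ refined by $\p_0$; it exists since $\mathfrak{P}(\Lambda)$ is finite. The cuts of $\p_M$ form a subset of the cuts of $\p_0$, hence contain no ascent. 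So every ascent axis lies in $\T_{\p_M}$, which gives $W^s(\Lambda)\subset W^s(\p_M)$; the reverse inclusion is automatic. Symmetrically, $\p_m$ is obtained by coarsening to a maximal element the partition that cuts at every $\lambda_i\leq\lambda_{i+1}$, whose blocks are strictly decreasing runs. The inclusions \eqref{eq:stable-inclusion} then follow from $W^s(\p)=W^s(\Lambda)\cap\T_\p\subset W^s(\Lambda)=W^s(\p_M)$, and likewise for the unstable subspaces.

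For (2), by (1) all $W^u(\p)$ vanish if and only if $W^u(\p_m)=W^u(\Lambda)=0$. This holds if and only if there is no index with $\lambda_i>\lambda_{i+1}$, i.e., the diagonal entries are weakly increasing. Since the multiset of eigenvalues is fixed, this happens if and only if $\Lambda=\Lambda^+$. The stable case is symmetric and gives $\Lambda^-$.

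It then remains to match $\Lambda^\pm$ with the maximum and minimum of $f$ on $\cT$. I would do this via the Hessian \eqref{eq:Hessian2} and the sign convention of the paper. Trivial unstable directions at every component mean $\Lambda$ is a local extremum of the appropriate type on each $\cT_\p$. Because $f$ is linear with increasing weights $a_i$, its global extremum over the conjugacy class is attained at the sorted diagonal, so it coincides with $\Lambda^\pm$. The only delicate point I expect is bookkeeping: checking that the maximal coarsening never reintroduces a cut at an ascent, which is immediate since coarsening only removes cuts, and keeping the sign conventions for stable versus unstable consistent with the statement.
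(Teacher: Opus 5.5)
Your proposal is correct and follows essentially the paper's argument: both exhibit a partition simple at $\Lambda$ whose cuts avoid every ascent and then coarsen it to a maximal element. The only difference in (1) is the starting partition: you cut at all weak descents, so the blocks are the strictly increasing runs, while the paper cuts at each contiguous repeated pair and at one strict descent inside each other innermost repeated pair. For (2), you deduce the claim directly from $W^u(\p_m)=W^u(\Lambda)$, which is slightly more economical than the paper's separate selection argument, and your rearrangement check that $\Lambda^{\pm}$ are the extrema of $f$ supplies a point the paper takes for granted.
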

\begin{proof}
 We regard partitions $\p\in \mathfrak{M}(\Lambda)$ as minimal sets of subdiagonal variables solving the system of monomial equations \eqref{eq:monomial}.
 First, we consider contiguous repeated pairs $(i,i+1)$ and select the corresponding variable $L_{i+1,i}$. These variables are not in $\L^\Lambda_\H$, so they do not contribute to $W^s(\Lambda)$. Let $(i,j)$ be an innermost repeated pair with $i-j>1$. There has to be an index $i\leq l< j$ such that $\lambda_l>\lambda_{l+1}$. Otherwise we would have
 \[\lambda_i<\lambda_{i+1}<\cdots <\lambda_{j-1}<\lambda_j,\]
 and that is a contradiction. We select the variable $L_{l,l+1}$. By doing  this for every innermost repeated pair we obtain a solution to the system of monomial equations, that is, a partition simple at $\Lambda$. We select $\p_M\in \mathfrak{M}(\Lambda)$ which the former partition precedes. Because all ascents for $\Lambda$ are within the blocks of $\p_M$,
 \[W^s(\p_M)=W^s(\Lambda),\]
 and this settles (1) for stable subspaces.

Next, we prove that if $\Lambda\neq \Lambda^+$,
then $W^u(\Lambda)\neq \{0\}$. Let $\lambda_l>\lambda_{l+1}$. We claim that there is a partition simple at $\Lambda$ that does not contain the variable $L_{l+1,l}$. If this is the case, then a maximal partition simple at $\Lambda$ preceded by it has non-trivial unstable manifold. To prove the claim  we let $(i,j)$ be an innermost repeated pair. (By assumption $i-j>1$). Because its associated monomial has  degree greater than one, it contains at least one variable different from $L_{l+1,l}$, that we select.

The case for unstable manifolds and for the minimum is proved analogously.
\end{proof}

\begin{corollary}\label{cor:morse-ordering} The function $f:\cT\to \R$
\begin{enumerate}
 \item does not have local maxima and minima different from the global ones;
 \item if $\Lambda$ is the maximum (resp. minimum) for $f$ on $\cT_{\p}$, then it cannot be the maximum (resp.
 minimum) for $f$ on any other submanifold component through $\Lambda$. Moreover,
 \begin{itemize}
  \item $\p_M\in \mathfrak{M}(\Lambda)$ (resp. $\p_m$) is unique and equals $\p$;
  \item $W^s(\p_M)$ (resp. $W^u(\p_m)$) strictly contains the  stable (resp. unstable) manifolds for all the other $\cT_{\p'}$, $\p'\in \mathfrak{M}(\Lambda)$.
 \end{itemize}
\end{enumerate}
\end{corollary}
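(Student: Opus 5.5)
Both parts of the corollary follow from Lemma \ref{lem:runs} once we read off what it means for $\Lambda$ to be an extremum on a single submanifold component. Throughout we use that the critical points of $f$ on $\cT$ are the diagonal matrices in $\cT$. We also use that, near such a $\Lambda$, the function $f$ restricted to $\T_\p$ is the quadratic form \eqref{eq:Hessian2} restricted to $\T_\p=W^s(\p)\oplus W^u(\p)$.

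\textbf{Part (1).} A local maximum of $f$ on $\cT$ at $\Lambda$ means that, in adapted coordinates, $f$ restricted to $\T^\Lambda$ has a local maximum at the origin. Since $\T^\Lambda$ is the union of the $\T_\p$, this forces $W^u(\p)=\{0\}$ for every $\p\in\mathfrak{M}(\Lambda)$. The identification of $W^u$ with local maxima follows the paper's convention: $f$ is Lyapunov for $-\cY$, so $f$ increases along the unstable directions of $-Y$. If the sign convention is the opposite one, we simply swap the roles of $W^s$ and $W^u$. By Lemma \ref{lem:runs}(2), all $W^u(\p)$ being trivial implies $\Lambda=\Lambda^+$, the global maximum. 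The argument for minima is symmetric, using $W^s$ and $\Lambda^-$.

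\textbf{Part (2), first claim.} Suppose $\Lambda$ is the maximum of $f$ on $\cT_\p$, so $W^u(\p)=\{0\}$ and $\T_\p=W^s(\p)$. Assume $\Lambda\neq\Lambda^+$; if $\Lambda=\Lambda^+$, the statement should be read relative to the nontrivial situation, and I would treat that case separately. By Lemma \ref{lem:runs}(2), some $\p'\in\mathfrak{M}(\Lambda)$ then has $W^u(\p')\neq\{0\}$. To show that $\Lambda$ is a maximum on no other component, I need the stronger fact that $W^u(\p')\neq\{0\}$ for \emph{every} $\p'\neq\p$. I would argue as follows.
\begin{enumerate}
\item By \eqref{eq:stable-inclusion}, $W^s(\p)\subset W^s(\p_M)$.
\item Since $\T_\p=W^s(\p)$ and the $\T_{\p''}$ are coordinate subspaces, this gives $\T_\p\subset \T_{\p_M}$.
\item Maximal partitions give components that are not contained in one another, so $\p=\p_M$.
\item Any other $\p'$ with $W^u(\p')=\{0\}$ would satisfy $\T_{\p'}=W^s(\p')\subset W^s(\p_M)=\T_\p$, giving $\p'=\p$ by the same maximality argument.
\end{enumerate}
This proves that $\p_M$ is unique and equals $\p$, and that no other component has a maximum at $\Lambda$.

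\textbf{Part (2), strict inclusion.} For $\p'\neq\p$ we already have $W^s(\p')\subset W^s(\p_M)$ by \eqref{eq:stable-inclusion}. If equality held, then $\T_{\p'}\supset W^s(\p)=\T_\p$, again contradicting maximality. So the inclusion is strict. The minimum case is identical, with $\p_m$ and unstable subspaces in place of $\p_M$ and stable subspaces.

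\textbf{Main obstacle.} The delicate step is the non-containment of distinct components $\T_\p\neq\T_{\p'}$ for $\p,\p'\in\mathfrak{M}(\Lambda)$. I would derive it from Proposition \ref{pro:adapted-coordinates}(2): the $\T_\p$ correspond to minimal sets of vanishing subdiagonal variables solving the monomial system \eqref{eq:monomial}. A containment $\T_\p\subset\T_{\p'}$ would mean the vanishing set of $\p'$ is a subset of that of $\p$, and minimality then forces the two sets to be equal.
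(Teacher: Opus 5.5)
Your proof is correct and follows the paper's argument: part (1) is read off from item (2) of Lemma~\ref{lem:runs}, and part (2) combines the inclusion $W^s(\p)\subset W^s(\p_M)$ with the fact that distinct components through $\Lambda$ cannot be nested. The one difference is that you prove the non-nesting locally, via $\T_\p=W^s(\p)\subset\T_{\p_M}$ and minimality of the hitting sets for the monomial system \eqref{eq:monomial}, whereas the paper globalizes to $\cT_\p\subset\cT_{\p_M}$ using density of the stable manifold of the maximum and compactness; also, your separate treatment of $\Lambda=\Lambda^+$ is unnecessary, since steps (1)--(4) never use $\Lambda\neq\Lambda^+$.
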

\begin{proof}
 The absence of non-trivial local maxima and minima follows from item (2) in Lemma \ref{lem:runs}.

 Let $\Lambda$ be the maximum for $f$ on $\cT_{\p}$. If $W^s(\p)\subset W^s(\p_M)$, then $\cT_{\p}\subset \cT_{\p_M}$ because the stable manifold for the maximum is dense in $\cT_{\p}$ and the submanifold components are compact. Because in the arrangement a submanifold cannot be strictly contained in another one, $W^s(\p_M)$ has to contain strictly the other stable manifolds for all partitions in $\mathfrak{M}(\Lambda)$. This also implies that it cannot be the maximum of another for the restriction of $f$ to another submanifold component.
\end{proof}

We need another tool for the analysis of the attaching maps for the cells built out of $f$: The group $\E$ of sign matrices modulo determinant. It acts by conjugation on $\cO$ and on $\L^\Lambda$
preserving $\cT$ and $\T^\Lambda$, respectively. It also takes the Toda vector field to itself. The group $\E$ is generated by the fundamental reflections which act on $\cT$ and $\T^\Lambda$ by changing the sign of just one off-diagonal position. The local diffeomorphism $\psi^{-1}:\cO\to \L^\Lambda$ is equivariant with respect to the action of $\E$, and so the projection $\pr_\H:\L^\Lambda\to \L^\Lambda_\H$ is. Therefore $\Psi:\T^\Lambda\to \cT$ is equivariant with respect to the action of $\E$.

\begin{proof}[Proof of Theorem \ref{thm:betti}] By item (1) in Lemma \ref{lem:runs},  the Toda vector field and the function $f$ produce a CW-complex decomposition of $\cT$: Just one cell is attached at each critical point $\Lambda$. The homotopy $h_\Lambda$ defined in Lemma  \ref{lem:Milnor} retracts $f^{-1}((-\infty,f(\Lambda)+2\epsilon])$ into
\[f^{-1}((-\infty, f(\Lambda)-\epsilon])\cup e_{\p_M},\]
where the dimension of the cell $e_{\p_M}$ is  the number $s$ of ascents of $\Lambda$.

Next, we claim that the attaching map
\[\partial e_{\p_M}\to e_{\p'_M}/\partial e_{\p'_M},\quad \p'\in \mathfrak{M}(\Lambda'),\]
where  $e_{\p'_M}$ is a cell of dimension $s-1$, has degree zero. Assuming the claim,
we conclude  that the integral cohomology is torsion free and the $d$-th Betti number is the number of permutations with $d$ ascents.

Let us assume that the spectrum of $\cT$ has $k$ distinct elements with multiplicities $n_1,\dots,n_k$.
By an identity of MacMahon \cite{MM} (see also \cite[Page 45]{Knu})
 \[\beta_d(\cT)=\sum_{j=0}^{d+1} (-1)^j\begin{pmatrix}n+1\\j\end{pmatrix}\begin{pmatrix}n_1+d-j\\n_1\end{pmatrix}\cdots \begin{pmatrix}n_k+d-j\\n_k\end{pmatrix}.\]

To substantiate the claim we recall that the attaching map is obtained by flowing $\partial e_{\p_M}$ with $\cY$ until it hits $f^{-1}(f(\Lambda')+2\epsilon))$, then composing with the homotopy $h_{\Lambda'}$, and then collapsing to a point the complement of the interior of $e_{\p'_M}$.

The stable space $W^s(\p_M)$ is larger than $W^s(\p'_M)$, so there exists a fundamental reflection $\sigma\in \E$ that acts non-trivially on $W^s(\p_M)$ and that fixes $W^s(\p'_M)$. Because the action of $\E$ preserves $\cY$, the Toda vector field is tangent to the fixed-point set $\cT^\sigma$ (to each submanifold component there). The homotopy $h_{\Lambda'}$ from Lemma \ref{lem:Milnor} also preserves the coordinate subspace ${(\T^{\Lambda'})}^\sigma$. Therefore only points in the hypersurface $\partial {e_{\p_M}^\sigma}\subset \partial e_{\p_M}$ can be sent to the interior of $e_{\p'_M}$. In the preimage of a small neighborhood of $\Lambda'$ the attaching map is the composition of a flow with a smooth retraction, and thus it is smooth. By invariance of domain, the attaching map cannot be surjective, and this implies  that its degree is zero.
\end{proof}
\begin{remark}\label{rem:closure} The closure of the unstable submanifold at $\Lambda$ is a submanifold of $\cT_{\p_M}$:  The product tridiagonal isospectral manifold  defined by the partition whose contiguous blocks occur at the ascents of $\Lambda$ \cite[Section 9]{Da2},\cite[Proposition 2]{Fr}. Therefore, the generators of the homology of $\cT$ are represented by submanifolds.
\end{remark}

\section{The homotopy of the tridiagonal isospectral set}\label{ssec:homotopy}
In this Section we prove that $\cT$ is an aspherical space by different methods. First, by using algebraic topological methods. Second, by constructing a mirror structure with appropriate properties \cite{Da1} which are verified using the Morse function $f$. Third,  by means of a canonical embedding into the tridiagonal isospectral  manifold $\cT_{n}$ (with simple spectrum), compatible with the Toda vector field.

\subsection{The universal covering space and graphs of euclidean spaces}

We recall that the fundamental reflections of the  group $\mathrm{E}$ are those changing sign in just one off-diagonal entry. Partitions of $\sl$ correspond to subgroups $\E_\p$ generated by those fundamental reflections indexed by the block changes of the partition.

The tridiagonal isospectral set for $\Lambda=\{-2,1,1\}$ is the join of two circles. It defines an oriented graph whose three vertices are the two circles and their intersection, and whose two arrows correspond to the inclusions of the intersection on each circle.
The universal covering space of the tridiagonal isospectral set is the Cayley graph for the free group on two generators (see Examples \ref{ex:RP2} and \ref{ex:Cayley}). This is a tree which  consists of copies of the real line with certain pattern in their intersection, which is either empty or a point (contractible). The intersection pattern can be partially described by an oriented graph where vertices corresponds to lines and to intersections of lines ---euclidean spaces of dimension one or zero--- and oriented edges correspond to inclusions. (To encode completely the Cayley graph we would need to order the incoming edges to a vertex corresponding to a line).

The following is an improved version of Theorem \ref{thm:aspherical} which includes a generalization of the properties of intersections for the Cayley graph.

\begin{theorem}\label{thm:aspherical-algebraic-topology}
 The universal covering space of $\cT$ is contractible. Under the covering map $\widetilde{\cT}\to \cT$
 \begin{enumerate}
  \item the preimage of each submanifold component $\cT_\p\subset \cT$ is a collection indexed by $\pi_1(\cT)/\pi_1(\cT_\p)$ of spaces diffeomorphic to euclidean space;
  \item the intersection of connected components of the preimages of $\cT_{\p_1},\dots,\cT_{\p_s}$, if non-empty, is diffeomorphic to euclidean space.
 \end{enumerate}
\end{theorem}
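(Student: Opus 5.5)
The plan is to realize $\cT$ as a \emph{complex of spaces} (a homotopy colimit) over the poset of simple partitions, and then apply the standard criterion for asphericity of such complexes. The poset is $\mathfrak{P}$, ordered by refinement, restricted to partitions $\p$ with $\cT_\p\neq\emptyset$. Over $\p$ we place the product tridiagonal isospectral manifold $\cT_\p$. By Proposition \ref{lem:arrangement}, every nonempty intersection $\cT_{\p_1}\cap\cdots\cap\cT_{\p_s}$ equals $\cT_{\p}$ for $\p=\p_1\cap\cdots\cap\p_s\in\mathfrak{P}$, and these intersections are clean. So $\cT$ is the union of the $\cT_\p$, glued along the inclusions $\cT_{\p}\subset\cT_{\p'}$ for $\p$ refining $\p'$. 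Because the pieces are compact submanifolds meeting cleanly, they admit compatible tubular (collar) neighbourhoods. Hence $\cT$ is homotopy equivalent to the homotopy colimit of this diagram of spaces and inclusions.

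The key input, assumed from \cite{To,Da2}, concerns products of tridiagonal isospectral manifolds with simple spectrum. Each such $\cT_\p$ is aspherical with universal cover $\R^{\dim}$. Moreover, for $\p$ refining $\p'$, Davis' description of the inclusion $\cT_\p\subset\cT_{\p'}$ applies. On each block, the inclusion corresponds to a subcomplex of the permutahedral tiling given by a special subgroup of the Coxeter group $W_{\p'}$, namely the face/parabolic subgroup. Consequently $\pi_1(\cT_\p)\to\pi_1(\cT_{\p'})$ is injective, and every lift of $\widetilde{\cT_\p}$ to $\widetilde{\cT_{\p'}}$ is an embedded euclidean subspace. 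I expect checking this injectivity and the euclidean nature of the lifts to be the main obstacle. I would first try to reduce it to the statement that a face of the permutahedron $\Delta_{\p'}$ is itself a product of permutahedra, whose special subgroup $W_\p\subset W_{\p'}$ injects and acts on a convex, hence contractible, sub-tiling.

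With injectivity in hand, I would invoke the theorem on complexes of aspherical spaces, in the form used by Davis or Haefliger's developability for complexes of groups. The conditions are that the diagram is over a poset with injective $\pi_1$ maps between aspherical pieces, and that the geometric realization of the nerve-type poset is contractible locally (simply connected links). Under these conditions the homotopy colimit is aspherical, and its universal cover is the homotopy colimit of the universal covers of the pieces, indexed by cosets. Concretely, the universal cover $\widetilde{\cT}$ is assembled from copies of $\widetilde{\cT_\p}\cong\R^{\dim\cT_\p}$ indexed by $\pi_1(\cT)/\pi_1(\cT_\p)$, which gives (1). It remains to show $\widetilde{\cT}$ is contractible. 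For this I would use the fundamental domain picture: $\widetilde{\cT}$ is the union of these euclidean pieces, and each nonempty intersection of lifts is a lift of some $\cT_\p$, hence euclidean, which gives (2). The nerve of the resulting cover of $\widetilde{\cT}$ is acyclic and simply connected because, locally, the poset of partitions through a diagonal $\Lambda$ has a maximal element over the ascent structure, as in Lemma \ref{lem:runs} and Corollary \ref{cor:morse-ordering}. A nerve-lemma argument with contractible pieces and contractible intersections then yields contractibility.

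Finally, for (2) I would verify that two lifts of $\cT_{\p_1}$ and $\cT_{\p_2}$ meeting in $\widetilde{\cT}$ intersect in a single lift of $\cT_{\p_1\cap\p_2}$ rather than in several components. This follows from the injectivity above: $\pi_1(\cT_{\p_1})\cap\pi_1(\cT_{\p_2})=\pi_1(\cT_{\p_1\cap\p_2})$ inside $\pi_1(\cT)$. That identity is the intersection property of special subgroups of a Coxeter group ($W_{S_1}\cap W_{S_2}=W_{S_1\cap S_2}$), transported through the epimorphisms to $\E$.
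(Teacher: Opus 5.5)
\textbf{There is a genuine gap at the global step, and that step is the whole content of the theorem.} Everything you actually establish is local information about the gluing: aspherical pieces with euclidean universal covers, and $\pi_1$-injective inclusions $\cT_\p\subset\cT_{\p'}$. (The part you call the main obstacle is exactly what the paper quotes from \cite[Lemma 7.1]{Da2} and Davis' description of the universal covers.) No such information can make $\widetilde{\cT}$ contractible once triple intersections occur.

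Take $X=\{x=0\}\cup\{y=0\}\cup\{z=0\}\subset\mathbb{T}^3=\R^3/\mathbb{Z}^3$. This is three flat $2$-tori meeting cleanly in three circles and one triple point. All inclusions are $\pi_1$-injective. The lifts are planes, lines and points, and intersections of lifts are connected, so even conclusions (1) and (2) hold for $X$. Near the triple point $X$ is three coordinate planes in $\R^3$. That is precisely the model $\T^\Lambda=\{L_{21}L_{32}L_{43}=0\}$ of Proposition \ref{pro:adapted-coordinates} at the triple point $\mathrm{diag}(-1,0,2,-1)$ of Example \ref{ex:arrangement}(iii). Yet $X$ is the $2$-skeleton of the cubical structure of $\mathbb{T}^3$, so $\widetilde X$ is the union of the integer coordinate planes in $\R^3$, which is homotopy equivalent to a wedge of $2$-spheres.

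This has three consequences for your proposal.
\begin{enumerate}
\item ``Injective $\pi_1$-maps plus a link condition imply the homotopy colimit is aspherical'' is not a valid criterion for gluing patterns of dimension $\geq 2$. One needs developability and a contractible development. The global injectivity $\pi_1(\cT_\p)\to\pi_1(\cT)$ that you use to index lifts in (1) is part of that requirement.
\item Your substitute nerve-lemma step is circular. Granted (2), the nerve lemma identifies $\widetilde{\cT}$ up to homotopy with the nerve of the lifted cover; in the example that nerve is $\mathbb{Z}*\mathbb{Z}*\mathbb{Z}$. So contractibility of the nerve \emph{is} the theorem.
\item Your justification misreads Lemma \ref{lem:runs}. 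That lemma says the stable subspaces of all components through $\Lambda$ lie in that of a single $\p_M$, i.e.\ one cell is attached at $\Lambda$. It does not say the components through $\Lambda$ have a largest element: the two circles through $\mathrm{diag}(1,-2,1)$ in Example \ref{ex:arrangement}(i) are incomparable. In any case the local nerve at $\Lambda$ is a simplex, which carries no global information.
\end{enumerate}

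\textbf{Your route to (2) has a related problem.} The identity $\pi_1(\cT_{\p_1})\cap\pi_1(\cT_{\p_2})=\pi_1(\cT_{\p_1\cap\p_2})$ is indeed what connectivity of pairwise intersections of lifts amounts to. But deriving it from $W_{S_1}\cap W_{S_2}=W_{S_1\cap S_2}$ presupposes a single right-angled Coxeter group $W$ with $\pi_1(\cT)=\ker(\chi:W\to\E)$, and with the stabilizer of a lift of $\cT_\p$ equal to $\ker\chi\cap W_{S_\p}$ for a suitable generating set $S_\p$. (One intersects with $\ker\chi$; nothing is transported to $\E$.) That is the basic construction of Theorem \ref{thm:mirror}, which your complex-of-spaces set-up does not provide. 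Even with it, you would need a Helly-type fact: a mirror meeting $\Delta\cap\cT_{\p_1}$ and $\Delta\cap\cT_{\p_2}$ meets $\Delta\cap\cT_{\p_1\cap\p_2}$ (cf.\ Remark \ref{rem:connected-intersection-2}). Only then is the generating set for the intersection really $S_1\cap S_2$.

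Once you use that construction, the missing global input is available. $\Delta$, its mirrors and their intersections are contractible, by Propositions \ref{pro:contractible} and \ref{pro:mirror} (via the Morse function $f$) or by Corollary \ref{cor:order-ideal}. Hence $\widetilde{\cT}\cong\cU(W,\cM(\Delta))$ is contractible by Davis' theorem.

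\textbf{Comparison with the paper.} The paper's proof of this statement quotes \cite[Lemma 7.1]{Da2} for injectivity. It gets asphericity and global injectivity from the graph-of-groups theorem \cite[Theorem 1B.11]{Ha}. It then proves (2) not group-theoretically but by a Mayer--Vietoris argument in the contractible $\widetilde{\cT}_N$: a disconnected intersection of two lifts would produce a nonzero class in $H^1(\widetilde{\cT}_N)$.

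Your instinct that the gluing pattern is higher-dimensional is correct. The same example shows that a bare graph-of-groups citation does not see the $2$-simplices coming from triple points. So in either write-up, the contractibility supplied by the mirror structure is the robust source of the global input.
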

\begin{proof}
To the tridiagonal isospectral set $\cT$ we assign the oriented graph whose vertices are the partitions $\p\in \mathfrak{P}$ which are intersections of maximal partitions of $\cT$, and whose arrows correspond to the inclusion of the respective $\cT_\p$.
Let $\p,\p'\in \mathfrak{M}$ be vertices of the graph corresponding to maximal partitions such that
\[\cT_{\p}\cap \cT_{\p'} \neq\emptyset.\]
Then $\cT_{\p\cap \p'}\subset \cT_\p$ is a connected component of the fixed-point
of the action of $\E_{\p'}$ on $\cT_\p$. By \cite[Lemma 7.1]{Da2}, applied to product tridiagonal submanifolds, the map between fundamental groups induced by the inclusion  is a monomorphism. The same holds for $\p_1,\dots,\p_s\in \mathfrak{M}$. The intersection, if non-empty, is aspherical
and
\[1\to \pi_1(\cT_\p)\to \pi_{1}(\cT_{\p_i}),\quad \p=\p_1\cap\cdots\cap \p_s.\]
If to each vertex $\p$ we associate $\pi_1(\cT_\p)$ and to each arrow (inclusion) the corresponding monomorphism of fundamental groups, then we obtain a graph of groups. By \cite[Theorem 1B.11]{Ha} $\cT$ is an aspherical space and the inclusion of each submanifold component induces a monomorphism in fundamental groups
\[1\to \pi_1(\cT_\p)\to \pi_{1}(\cT).\]
Because $\cT$ is a CW-complex with trivial homotopy groups, it is contractible. The previous monomorphism on fundamental groups
 implies that the preimage under $\widetilde{\cT}\to \cT$ of $\cT_\p$ is a disjoint collection of spaces parametrized by $\pi_1(\cT)/\pi_1(\cT_\p)$, each of which is diffeomorphic to $\widetilde{\cT}_\p$. By \cite[Theorem 3.3, Remark 3.4, and Example 4.4]{Da2} (see also  \cite[Section 5]{To}) this universal covering space is diffeomorphic to euclidean space. This proves (1).

For any subset  $N\subset \mathfrak{M}$ so that the union of its submanifold components is a connected $\cT_N\subset \cT$, it follows from the proof of \cite[Theorem 1B.11]{Ha} that $\cT_N$ is aspherical and that the inclusion induces a monomorphism of fundamental groups,
\[1\to \pi_1(\cT_N)\to \pi_{1}(\cT).\]
This implies that to analyze the intersection $\cap_i\widetilde{\cT}_{\p_i}$ of copies of the universal covering space of $\cT_{\p_i}$ in $\cT$, we can do it in (a copy in $\widetilde{\cT}$ of ) $\widetilde{\cT}_N$, $N=\{\p_1,\dots,\p_s\}$. We argue by induction in the number of submanifold components using the Cech-de Rham double complex (using reduced singular cohomology instead of forms). If $N$ has two members $\p_a,\p_b$, we consider the infinite cover of $\widetilde{\cT}_N$ by copies $U_\alpha,V_\beta$ of $\widetilde{\cT}_{\p_a}$ and  $\widetilde{\cT}_{\p_b}$, respectively  (strictly speaking, to have an open cover we would take small tubular neighborhoods of $\cT_{\p_a}$ and $\cT_{\p_b}$ in $\cO$, intersect them with $\cT_N$, and take their preimage by the universal covering map). Because the triple intersections are empty, the spectral sequence that converges to the reduced homology
$\widetilde{H}^*(\widetilde{\cT}_N)$ degenerates into a Mayer-Vietoris type sequence. The contractibility of the subsets in the cover implies
\[\widetilde{H}^*(\widetilde{\cT}_N)\cong \sum_{\alpha,\beta}\widetilde{H}^*(U_\alpha\cap V_\beta),\]
and this implies that all intersection are either empty or connected. The connected intersections are copies of the universal covering space of $\cT_{\p_a}\cap \cT_{\p_b}$. One can argue  by induction on the number of members in $N$.  Using that the intersections of less than $s$ subsets of the cover is contractible, that  the intersection of $s+1$ subsets is empty, and that $\widetilde{\cT}_N$ is contractible, the spectral sequence
implies that the intersections of $s$ subsets of the cover is either empty or connected (a copy of $\cap_i\widetilde{\cT}_{\p_i}$).
\end{proof}

\begin{remark} We can assign to $\cT$ the corresponding oriented graph of intersections where vertices correspond to the different components of the  $\widetilde{\cT}_\p$ and their intersections ---euclidean spaces by Theorem \ref{thm:aspherical-algebraic-topology}.
We do not know whether a combinatorial description of $\widetilde{\cT}$ is possible beyond low dimensional cases. A first step  would be to analyze whether the universal covering space $\cT_{\p_a}\cap \cT_{\p_a}$  sits in $\widetilde{\cT}_{\p_a}$, up to isotopy, as a vector subspace in euclidean space (whether its fundamental group at infinity is trivial).
\end{remark}

\subsection{The mirror structure}\label{ssec:mirror}
The {\bf positive chamber} for the action of $\E$ on $\cT$ is defined as those matrices with positive subdiagonal entries,
\[\Delta=\{X\in \cT\,|\, X_{i+1,i}\geq 0\}.\]
It is a fundamental domain for the action of $\E$ on $\cT$, in the sense that every orbit of $\mathrm{E}$ intersects $\Delta$ at exactly one point.

We use $f$ to describe the (trivial) topology of the positive chamber.

\begin{proposition}\label{pro:contractible} The positive chamber $\Delta$ is contractible.
 \end{proposition}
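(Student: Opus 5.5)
The plan is to run a Morse-theoretic argument on $\Delta$ with the modified Lyapunov function $f$ from Section \ref{sec:homology}, using the $\E$-action to cut the cell structure of Theorem \ref{thm:betti} down to the positive chamber. Since $\E$ preserves $f$, the Toda vector field and the adapted coordinates, the flow of $\cY$ (and of $-\cY$) preserves $\Delta$: signs of subdiagonal entries are preserved along Toda trajectories, because by Proposition \ref{pro:adapted-coordinates}(3) they evolve as $L_{i+1,i}'=(\lambda_{i+1}-\lambda_i)L_{i+1,i}$ in adapted coordinates. The critical points of $f|_\Delta$ are therefore the diagonal matrices of $\cT$, each lying in $\Delta$. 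Near such a $\Lambda$, $\Delta$ corresponds in the chart $\Psi$ to $\T^\Lambda\cap\{L_{i+1,i}\ge 0\}$, that is, to a union of closed orthants of the coordinate subspaces $\T_\p$, $\p\in\mathfrak{M}(\Lambda)$.

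The key step is to redo Lemma \ref{lem:Milnor} restricted to $\Delta$. The homotopies $h_1$ and $h_2$ are built from diagonal linear vector fields, so they preserve every closed coordinate orthant, and hence preserve $\Psi^{-1}(\Delta)$. We therefore get a deformation retraction of $\Delta\cap f^{-1}((-\infty,f(\Lambda)+2\epsilon])$ onto $\Delta\cap f^{-1}((-\infty,f(\Lambda)-\epsilon])\cup \bigcup_{\p}(e_\p\cap\Delta)$. By Lemma \ref{lem:runs}(1) all the $e_\p$ are contained in $e_{\p_M}$, so the attached piece is $e_{\p_M}\cap\{L_{i+1,i}\ge0\}$. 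This is the intersection of a ball in the stable coordinates with the closed positive orthant, hence a closed cone, and it is attached along its ``lower'' boundary $\partial e_{\p_M}\cap$ orthant.

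The topology is then as follows. If $\Lambda\ne\Lambda^-$, then $W^s(\p_M)\neq 0$ by Lemma \ref{lem:runs}(2), and the orthant-sector of the ball deformation retracts onto its spherical face, which is a contractible spherical simplex. Attaching such a piece along that face is therefore a homotopy equivalence, and the sublevel set of $\Delta$ does not change homotopy type when passing $\Lambda$. At the minimum $\Lambda^-$ the sublevel set is a single point. Between critical values the flow of $-\cY$ gives the usual deformation retraction, which again preserves $\Delta$. Inductively, $\Delta$ is homotopy equivalent to a point. Since $\Delta$ is a compact semialgebraic set, hence a CW complex, this gives contractibility.

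The main obstacle is the local claim that the attaching region is exactly the spherical face of the orthant-sector. This requires two things. First, the orthant $\{L_{i+1,i}\ge0\}$ must meet $\T_{\p_M}$ in a full orthant of the stable and unstable splitting, which holds because the sign constraints are coordinatewise and $\pr_\H$ and $\phi$ preserve signs of subdiagonal variables. Second, the other components $e_\p\cap\Delta$ must add nothing new, which holds because they are coordinate faces of the same orthant in $e_{\p_M}$. I would check the first point carefully using $\E$-equivariance of $\Psi$ together with the fact that $\phi$ fixes the subdiagonal coordinates.
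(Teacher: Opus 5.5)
Your proposal is correct and is essentially the paper's proof. You restrict the standard homotopy $h_\Lambda$ to the positive chamber, using that the auxiliary fields are diagonal linear and so preserve closed orthants; you collapse the sector $e_{\p_M}\cap\Delta^\Lambda$ onto its spherical face, which the paper does explicitly with the homotopy $h_3$ along a constant field pointing into the orthant; and you induct over the critical levels. On the point you flagged: since $\nu(L)^{-1}X\nu(L)=L^{-1}\Lambda L$ and $\nu(L)$ has positive diagonal, $X_{i+1,i}$ has the sign of $(\lambda_{i+1}-\lambda_i)L_{i+1,i}$, not of $L_{i+1,i}$, so $\Delta$ corresponds to $L_{i+1,i}\ge 0$ at ascents and $L_{i+1,i}\le 0$ at descents. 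This is harmless, because every homotopy involved preserves each closed orthant and the attached sector lies in the ascent coordinates, where your sign is right.
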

\begin{proof}
We contract $\Delta$ to the minimum $\Lambda^-$ by adapting the
standard homotopy $h_\Lambda$ to the positive chamber.

%
We denote by $\Delta^\Lambda\subset \T^\Lambda$ the union of positive chambers for the action of $\E$ on $\T_\p$, $\p\in \mathfrak{M}(\Lambda)$. The equivariance of $\Psi:\T^\Lambda\to \cT$ implies that $\Delta^\Lambda$ is mapped (in)to $\Delta$. The positive chambers $\Delta$ and $\Delta^\Lambda$ are invariant by the Toda vector field and by $Y$, respectively; $\Delta^\Lambda$ is also invariant by $\sum_l -U_l\frac{\partial}{\partial U_l}$. Therefore we can restrict the standard homotopy

\begin{equation}\label{eq:fundamental-standard-homotopy}
  h_\Lambda:f^{-1}((-\infty, f(\Lambda)+2\epsilon])\cap \Delta^\Lambda\to \left(f^{-1}((-\infty, f(\Lambda)-\epsilon])\cup e_{\p_M}\right)\cap \Delta^\Lambda.
\end{equation}
The intersection $W^s(\p_M)\cap \Delta^\Lambda$ is a positive chamber for the residual action of $\mathrm{E}$ on $W^s(\p_M)$. It has positive dimension  unless $W^s(\p_M)$ is a point. By Corollary \ref{cor:morse-ordering} this only happens at $\Lambda^-$. If this is not the case, then we consider the auxiliary constant diagonal vector field
\[\sum_{k}\epsilon_k\frac{\partial}{\partial S_k},\]
where $\epsilon_k=0$ if $S_k$ is not a coordinate in $W^s(\p_M)$, and it is 1 or $-1$ otherwise, so that it points inside the positive chamber $W^s(\p_M)\cap \Delta^\Lambda$ along the part of its boundary which is a union of coordinate subspaces. We define the homotopy
\[h_3:\left(f^{-1}((-\infty, f(\Lambda)-\epsilon])\cup e_{\p_M}\right)\cap \Delta^\Lambda\to f^{-1}((-\infty, f(\Lambda)-\epsilon])\cap \Delta^\Lambda\]
that is the identity  at a point in $f^{-1}((-\infty, f(\Lambda)-\epsilon])\cap \Delta^\Lambda$, and that at  $L\in e_{\p_M}\cap \Delta^\Lambda$ traverses at a constant speed the segment tangent to the constant auxiliary vector field that goes from $L$ to the component of the boundary of  $e_{\p_M}\cap \Delta^\Lambda$ in $\partial e_{\p_M}$.

We denote the concatenation of $h_\Lambda$ with $h_3$ by $h_{\Delta^\Lambda}$. Observe that $h_{\Delta^\Lambda}$ is well-defined in $\Delta$, since $h_3$ takes $\cT_{\p_M}\cap \Delta$ to itself.

We use  the Toda vector field combined with the homotopy $h_{\Delta^\Lambda}$
near each critical point to contract $\Delta$ into $\Lambda^{-}$.
We start at the maximum $\Lambda^+$.
There, we apply $h_3$ to retract $\Delta$ onto $f^{-1}((f(\Lambda^+)-\epsilon))\cap \Delta$. Then we follow the flow of the Toda vector field until we hit the level set
$f^{-1}(f(\Lambda)+2\epsilon))\cap \Delta$, where $\Lambda$ is the next critical point. If $\Lambda$ is not the minimum, then  we can apply  $h_{\Delta^\Lambda}$ to retract onto $f^{-1}((f(\Lambda)-\epsilon))\cap \Delta$. By induction, we retract $\cT$ onto $\Lambda^{-}$.
\end{proof}

For each fundamental reflection $\sigma\in \E$  we consider the intersection of its  fixed-point set with $\Delta$,
\[\Delta^\sigma\subset \cT^\sigma.\]
We want to analyze the topology of its connected components and of their intersections, for different fundamental reflections.

We  denote by $\cS^\cT$ the collection of diagonal matrices in $\cT$.
\begin{proposition}\label{pro:mirror}  Each connected component of $\Delta^\sigma$ is a fundamental chamber of a product  tridiagonal isospectral set, and so are the intersections of these connected components for different fundamental reflections, if non-empty.
 \end{proposition}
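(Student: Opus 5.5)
The plan is to describe the fixed-point set $\cT^\sigma$ of a fundamental reflection explicitly, and then intersect it with $\Delta$.

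Let $\sigma\in\E$ be the fundamental reflection that changes the sign of the off-diagonal entry in position $(i+1,i)$; conjugation by $\sigma$ maps $X_{i+1,i}\mapsto -X_{i+1,i}$ and fixes the other subdiagonal entries of a tridiagonal symmetric matrix. Hence
\[\cT^\sigma=\{X\in \cT\,|\, X_{i+1,i}=0\}.\]
Let $\p^\sigma$ be the partition of $\sl$ with a single block change at $(i,i+1)$. Every $X\in\cT^\sigma$ lies in $\sl(\p^\sigma)$, and it splits into two blocks, of sizes $i$ and $n-i$. The spectrum of the first block is a sub-multiset $A$ of the spectrum of $\Lambda$ of cardinality $i$, and the second block has the complementary multiset. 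Since $A$ is locally constant on $\cT^\sigma$, we get a decomposition
\[\cT^\sigma=\bigsqcup_{A}\cT_A\times \cT_{A^c},\]
where $\cT_A$ is the tridiagonal isospectral set of the $i\times i$ diagonal matrix with spectrum $A$, and $\cT_{A^c}$ is defined likewise.

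The second step is to see that each $\cT_A\times\cT_{A^c}$ is connected, so that these pieces are exactly the connected components. For this I would argue that tridiagonal isospectral sets are connected. Every point flows by the Toda field to the minimum $\Lambda^-$, or one can invoke Proposition \ref{pro:contractible}, since $\cT=\E\cdot\Delta$ and every $\E$-translate of $\Delta$ contains $\Lambda^-$.

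Next I intersect with $\Delta$. The condition $X_{l+1,l}\geq 0$ for $l\neq i$ is precisely the positive chamber condition for the product. Therefore each connected component of $\Delta^\sigma$ is $\Delta_A\times\Delta_{A^c}$, which is the positive chamber of the product tridiagonal isospectral set $\cT_A\times\cT_{A^c}$. For this identification to hold, $\Delta_A\times\Delta_{A^c}$ must be connected, and it is, being a product of contractible sets by Proposition \ref{pro:contractible} applied to each factor.

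For several reflections $\sigma_1,\dots,\sigma_s$, I would use induction. An intersection of components of $\Delta^{\sigma_1},\dots,\Delta^{\sigma_s}$ consists of matrices that are block diagonal for the partition $\p$ whose block changes are at the chosen positions. Each block has a prescribed sub-multiset of eigenvalues. These multisets are the ones compatible with the choices made for each $\sigma_j$, and they are uniquely determined whenever the intersection is non-empty. The intersection is therefore the positive chamber of the product tridiagonal isospectral set
\[\prod_{\text{blocks }b}\cT_{A_b},\]
and it is connected by the same product argument.

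I expect the main obstacle to be the bookkeeping that justifies this last point. One has to check that a component of $\Delta^{\sigma_1}$, indexed by $A$, meets a component of $\Delta^{\sigma_2}$, indexed by $B$, in exactly one product chamber rather than several. When $i_1<i_2$, the first-block multisets must satisfy $A\subset B$, and the middle block then has spectrum $B\setminus A$. Making this nesting precise, in particular with repeated eigenvalues where sub-multisets have to be handled carefully, is the delicate combinatorial step.
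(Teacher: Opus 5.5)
Your proposal is correct and is essentially the paper's argument. You split $\cT^\sigma$ by the multiset of eigenvalues of the first block, identify each piece with a product tridiagonal isospectral set whose positive chamber is connected by Proposition \ref{pro:contractible}, and for several reflections note that the block spectra $A_l\setminus A_{l-1}$ are forced. The paper gets this last point by flowing to a diagonal matrix and comparing diagonal matrices through block-preserving permutations; your multiset bookkeeping is more direct and not actually delicate, since spectra of consecutive diagonal blocks simply add as multisets.

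One slip: not every point flows under the Toda field to $\Lambda^-$, since this fails off an open dense set. For connectedness, rely on your second argument instead: $\cT=\E\cdot\Delta$, and $\Lambda^-\in\Delta$ is fixed by $\E$.
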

\begin{proof}
The reflection $\sigma$ changes sign on a given subdiagonal position $X_{i+1,i}$. Its fixed-point set on $\cT$ is
\[\cT^\sigma=\{X\in \cT\,|\, X_{i+1,i}=0\}.\]
The reflection defines the corresponding partition $\p_\sigma$ of $\sl$. It gives rise to a collection $\p_{\sigma,j}$ of equivalence classes, where $\p_\sigma(\Lambda)\sim \p_\sigma(\Lambda')$, for $\Lambda,\Lambda'\in\cS^\cT$,  if on both blocks the (unordered) eigenvalues of $\Lambda$ and $\Lambda'$ agree.
These equivalence classes  are in bijection with the connected components of $\Delta^\sigma$: The intersection of the  $\So(\p_\sigma)$-conjugacy class of a representative $\Lambda$ with $\ss_\H$ is the product of two tridiagonal isospectral sets $\cT_{\p_{\sigma,j}}$, and thus, by Theorem \ref{thm:betti},  a connected subset. The intersection $\Delta^\sigma_j=\Delta^\sigma\cap \cT_{\p_{\sigma,j}}$ is the positive chamber for the action on $\cT_{\p_{\sigma,j}}$ of the subgroup of $\E$ generated by the fundamental reflections different form $\sigma$. Therefore
by Proposition \ref{pro:contractible}, applied to the product tridiagonal set $\cT_{\p_{\sigma,j}}$,
it is connected.
Any $X\in \Delta^\sigma$ must be in one of the $\Delta^\sigma_j\subset \cT_{\p_{\sigma,j}}$, according to the spectrum of each of its two blocks.
If $j\neq l$, then $\cT_{\p_{\sigma,j}}\cap \cT_{\p_{\sigma,l}}=\emptyset$. The reason is that
both $\cT_{\p_{\sigma,j}}$ and  $\cT_{\p_{\sigma,l}}$ sit in the block product $\sl(\p_\sigma)\subset \sl$ and the eigenvalues of a matrix in $\cT_{\p_{\sigma,j}}$ in one block (in both) differ from the eigenvalues of a matrix in $\cT_{\p_{\sigma,l}}$. The conclusion is that we have a disjoint union
\[\Delta^\sigma=\coprod_j \Delta^\sigma_j.\]

Consider a subset of fundamental reflections $\sigma_1,\dots,\sigma_s$ of $\mathrm{E}$. Select for each one a tridiagonal isospectral set and let  $\cT_{\p_i}$, $\Delta^i$, and $\p$ denote $\cT_{\p_{\sigma_i,j}}$, $\Delta\cap \cT_{\p_{\sigma_i,j}}$, and  $\cap_i\p_{\sigma_i}$, respectively.
Let $X\in \cap_i \cT_{\p_i}\neq \emptyset$. Then
 the closure of the trajectory of $\cY$ through it contains a diagonal matrix $\Lambda$. This implies that
$\cap_{i} \cT_{\p_{\sigma_i}}$ contains $\cT_\p$ defined as the intersection of the $\So(\p)$-conjugacy class of $\Lambda$ with $\ss_\H$, which is a product tridiagonal isospectral set.
To show the equality, let $\Lambda'$ be another diagonal matrix in the intersection. This means that
the repeated eigenvalues within each block of $\p$ are permuted within the block. Therefore there exists a permutation which preserves each block  of $\p$ and whose transpose takes $\Lambda$ to $\Lambda'$.
By construction $\cap_{i} \Delta^i$ is the positive chamber for the action on $\cT_\p$ of the subgroup of $\E$
generated by fundamental reflections different from $\sigma_i$.
\end{proof}
\begin{remark}\label{rem:connected-intersection}
The proof of Proposition \ref{pro:mirror} shows that for arbitrary partitions $\p_i$, if we select  a connected component $\cT_{\p_i,j}$ of the fixed point-set for the action on $\cT$ of the subgroup of $\E$ generated by the fundamental reflections indexed by the block changes in $\p_i$, then
\[\bigcap_i\cT_{\p_i,j}=\cT_\p,\quad \p=\bigcap_i\p_i,\]
if non-empty. This extends the statement in the proof of Lemma \ref{lem:arrangement} from simple partitions of $\cT$ to  partitions $\p_i$ of $\sl$.
\end{remark}

We have the tools to define the mirror structure and analyze its properties.

\begin{definition}\label{def:mirror} The {\bf mirror structure} $\cM(\Delta)$ on $\Delta$ associated to $\cT$ has mirrors $\{\Delta^{\sigma_i}_j\}$, the connected components of the fixed point sets $\Delta^{\sigma_i}$ for the fundamental reflections.
\end{definition}

\begin{example}\label{ex:mirror1} We describe the mirror structures for two of the tridiagonal  isospectral sets in Example \ref{ex:arrangement}. The fundamental domain is in black and the color of a mirror corresponds to the fundamental reflection of $\E$ that defines it.
\begin{enumerate}[(i)]
 \item $\Lambda=\{-2,1,1\}$: $\cT=\mathbb{S}^1\cup \mathbb{S}^1$
\begin{figure}[H]
\includegraphics[scale=1.4]{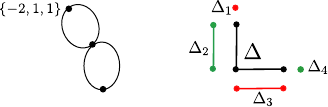}
\caption{The mirror structure for $\Lambda=\{-2,1,1\}$ has four mirrors.}
 \end{figure}
 \item $\Lambda=\{-1,-1,1,1\}$: $\cT=\mathbb{S}^1\cup \mathbb{T}^2\cup \mathbb{S}^1$
 \begin{figure}[H]
\includegraphics[scale=1.4]{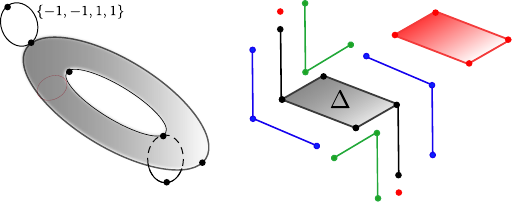}
\caption{The mirror structure for $\Lambda=\{-1,-1,1,1\}$ has seven mirrors.}
 \end{figure}
 \end{enumerate}
\end{example}

Following  \cite[Section 6]{Da1},\cite[Section 1]{Da2} to $\cM(\Delta)$ one associates a (right-angled) Coxeter system  by fixing a bijection $b$ from a set $S$ to the mirrors \[b:S=\{w_a\}\to \{\Delta^{\sigma_i}_j\},\] and defining
\[W=\{S\,|\, w_a^2=1,\, (w_{a_l}w_{a_s})^2=1\Longleftrightarrow b(w_{a_l})\cap b(w_{a_s})\neq \emptyset \}.\]
There is a well-defined surjective morphism
 \begin{equation}\label{eq:torsion-free} \chi:W\to \mathrm{E},\quad \w_a\mapsto \sigma_{w_{a}},\quad  (b(w_a)=\Delta_{a}^{\sigma_{w_a}}).
   \end{equation}
The {\bf basic construction} \cite[Section 13]{Da1},\cite[Section 1]{Da2} produces a topological space acted upon by $W$,
\[\cU(W,\cM(\Delta))=W\times \Delta/\sim, \quad (w,X)\sim (w',X')\Longleftrightarrow \sigma_w^{-1}\sigma_{w'}X\sigma_w{\sigma_{w'}}^{-1}=X.\]

The next result includes a second proof  of Theorem \ref{thm:aspherical} in the Introduction.

\begin{theorem}\label{thm:mirror} The mirror structure $\cM(\Delta)$ has the following properties:
 \begin{enumerate}
  \item (Finite type) The group $W$ acts properly on $\cU(W,\cM(\Delta))$;
 \item $\cU(W,\cM(\Delta))$ is contractible;
  \item The kernel of $\chi:W\to \mathrm{E}$
  is a torsion free group isomorphic to $\pi_1(\cT)$.
  \item  $\cT$ is aspherical and  $\cU(W,\cM(\Delta))$ is a contractible arrangement.
 \end{enumerate}
\end{theorem}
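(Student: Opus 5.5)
The plan is to apply Davis' general machinery for mirror structures \cite[Sections 6, 13]{Da1}, \cite[Section 1]{Da2}. The inputs are Propositions \ref{pro:contractible} and \ref{pro:mirror}, together with the fact that $\E$ acts on $\cT$ with strict fundamental domain $\Delta$. The order will be: (1) finite type, then (2) contractibility of the basic construction, then the identification of $\cU(W,\cM(\Delta))$ with the universal cover, which gives (3) and (4) simultaneously.

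\textbf{Finite type.} Properness of the $W$-action on $\cU(W,\cM(\Delta))$ follows from two facts. First, $\Delta$ is compact, being a closed subset of the compact set $\cT$. Second, each point $X\in\Delta$ lies in only finitely many mirrors. Indeed, the mirrors are connected components of the sets $\Delta^{\sigma}$; there are finitely many fundamental reflections, and each $\Delta^\sigma$ has finitely many components, indexed by the classes $\p_{\sigma,j}$. Hence the isotropy group $W_X$, generated by the mirrors containing $X$, is a finite special subgroup of $W$. It is generated by pairwise commuting involutions, since mirrors through a common point intersect. Moreover, $X$ has a neighbourhood in $\Delta$ meeting only the mirrors through $X$, because mirrors are closed. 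These are exactly the hypotheses in \cite{Da1} guaranteeing a proper action.

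\textbf{Contractibility.} By Davis' theorem \cite[Theorem 13.5]{Da1}, $\cU(W,\cM(\Delta))$ is contractible provided $\Delta$ is contractible and every non-empty intersection of mirrors is contractible. The first condition is Proposition \ref{pro:contractible}. For the second, Proposition \ref{pro:mirror} shows that any non-empty intersection of mirrors is the positive chamber of a product tridiagonal isospectral set $\cT_\p$, for the subgroup of $\E$ generated by the remaining fundamental reflections. Applying Proposition \ref{pro:contractible} to each factor, and noting that the Toda/Morse argument works verbatim for products, shows that this intersection is contractible. One point must be checked: in the right-angled presentation, two mirrors coming from the same reflection $\sigma$ (different $j$) are disjoint, so no relation is imposed between them. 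This is consistent with $\chi$, since the corresponding elements of $\E$ coincide and do not need to commute.

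\textbf{Identification with the universal cover and conclusions.} Consider the natural map $\cU(W,\cM(\Delta))\to\cT$, $[w,X]\mapsto \chi(w)\cdot X$. It is well defined because $X\in b(w_a)$ is fixed by $\sigma_{w_a}$, and it is $\chi$-equivariant and surjective. The main obstacle is to show that this map is a covering whose deck group is $\ker\chi$, with $\ker\chi$ acting freely. I would first prove that $\ker\chi$ is torsion free. Every finite subgroup of $W$ is conjugate into a special subgroup $W_X$. The map $\chi$ is injective on each $W_X$: the generators of $W_X$ correspond to distinct reflections, because two mirrors for the same $\sigma$ are disjoint, and distinct fundamental reflections generate a subgroup $(\mathbb{Z}/2)^r$ of $\E$ (modulo determinant this needs care only when all reflections appear, which cannot happen at a point of $\Delta$). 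Hence $\ker\chi$ acts freely. Next, I would compare the local structures. Near $[1,X]$, the space $\cU$ looks like $W_X\times\Delta/\sim$, while near $X$ the set $\cT$ looks like $\E_X\cdot\Delta$, where $\E_X=\chi(W_X)$ is the stabilizer. Here Remark \ref{rem:connected-intersection} is used: at $X$, the components of the fixed-point sets through $X$ are exactly the mirrors through $X$. Comparing these local pictures shows that the quotient $\cU/\ker\chi\to\cT$ is a local homeomorphism, and a bijection, hence a homeomorphism. Thus $\cU\to\cT$ is a regular covering with group $\ker\chi$. Since $\cU$ is contractible, it is the universal cover, so $\ker\chi\cong\pi_1(\cT)$ and $\cT$ is aspherical. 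Finally, $\cU$ is a contractible arrangement because it is tiled by $W$-translates of $\Delta$, and the lifted submanifold components are the ones described in Theorem \ref{thm:aspherical-algebraic-topology}.
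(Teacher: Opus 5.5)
Your arguments for (1) and for the identification $\cU(W,\cM(\Delta))/\ker\chi\cong\cT$ are essentially the paper's. The contractibility step (2), however, has a genuine gap. You invoke \cite[Theorem 13.5]{Da1} assuming only that $\Delta$ and the \emph{non-empty} intersections of mirrors are contractible, and that is not enough. Davis' criterion needs the intersection $\Delta_T$ of the mirrors indexed by $T$ to be non-empty (and acyclic) for \emph{every} $T\subset S$ with $W_T$ finite. In the right-angled group $W$ these $T$ are exactly the families of pairwise intersecting mirrors. Without non-emptiness the conclusion fails. Take a triangle with its three edges as mirrors: the chamber, the mirrors and the non-empty intersections are all contractible. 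But all generators commute, so $W=(\mathbb{Z}/2)^3$, and $\cU$ is the boundary of the octahedron, a $2$-sphere.

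So you must prove that pairwise intersecting mirrors of $\cM(\Delta)$ have a common point. The paper does exactly this in the paragraph before its appeal to Davis' theorem. Since two mirrors of the same reflection are disjoint, the reflections involved are distinct; order them by position $i_1<\cdots<i_s$. Pairwise intersection forces the multisets of eigenvalues prescribed for the first blocks to be nested. Juxtaposing diagonal blocks whose spectra are the successive differences then produces a diagonal matrix lying in all the mirrors (cf.\ Remark \ref{rem:connected-intersection-2}).

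The same omission affects your claim that every finite subgroup of $W$ is conjugate into some $W_X$. \cite[Lemma 1.3]{Da2} only gives conjugacy into a finite special subgroup $W_T$, and $W_T\subseteq W_X$ requires $X\in\Delta_T$. Torsion-freeness of $\ker\chi$ survives anyway, since the generators of a finite $W_T$ map to distinct fundamental reflections; contractibility does not.

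A minor point: your parenthetical claim that all reflections cannot appear at a point of $\Delta$ is false. A diagonal matrix in $\Delta$ lies on a mirror of every fundamental reflection. It is harmless, though: the fundamental reflections form a basis of $\E\cong(\mathbb{Z}/2)^{n-1}$, so distinct ones always generate a subgroup of the expected order.
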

\begin{proof}
Let $X\in \Delta$ and let $W_X\subset W$ be the subgroup generated by the letters that corresponds to mirrors of $\cM(\Delta)$ containing $X$. Because any two such mirrors intersect at $X$, the morphism $\eqref{eq:torsion-free}$ maps $W_X$ isomorphically  to the centralizer of $X$ for the action of $\E$. Hence  $W_X$ is finite, and this proves (1).

Let $S'\subset S$ be such that $W_{S'}$ is finite. We claim that $\chi$  maps it isomorphically to the subgroup of $\E$ that fixes all points in the intersection of mirrors corresponding to $S'$. Because $W_{S'}$ is finite any such pair of mirrors must have non-empty intersection. Thus, the elements in $S'$ are mapped by $\chi$ to different fundamental reflections. In particular $\chi$ maps $W_{S'}$ isomorphically to the subgroup generated by them. Let us order the fundamental reflections $\sigma_{i_1},\dots,\sigma_{i_s}$, $i_1<\dots <i_s$, where the index is the last position of the first block. Let $\Lambda_{i_1}$ be
a diagonal matrix of size $i_1$ whose spectrum is the one determined by the first block of the partition $\p_{i_1}$ of $\cT$ defined by the mirror $b(w_{i_1})$. Because all mirrors intersect the first one, the spectrum of $\Lambda_{i_1}$ is contained in the first block of the spectrum determined by $\p_{i_j}$, $j>1$. We can then take a second diagonal matrix of size $i_2-i_1$ whose spectrum, together with the one of $\Lambda_{i_1}$, is the spectrum determined by the first block of $\p_{i_2}$. If we juxtapose the two block diagonal matrices, then  we get a block diagonal matrix whose spectrum is contained in the spectrum determined by the first block of $\p_{i_j}$, $j>2$. By induction, we obtain
\[\Lambda\in \bigcap_{j} b(w_{i_j}).\]
Every point in the intersection of mirrors is fixed by $\chi(W_{S'})$. Conversely, there cannot be another reflection $\sigma$ fixing all points in the intersection. The intersection is connected, so it would be contained in a  connected component of $\Delta^\sigma$, that is, in yet another mirror, what would make the intersection smaller.

By Propositions \ref{pro:mirror} and \ref{pro:contractible}, $\Delta$, the mirrors, and the non-empty  intersections of mirrors are contractible. Therefore,
by   \cite[Theorem 13.5]{Da1} $\cU(W,\cM(\Delta))$ is a contractible space, and this proves (2).

Any finite subgroup of $W$ is conjugated to a finite $W_{S'}$ \cite[Lemma 1.3]{Da2}. We saw that \eqref{eq:torsion-free} maps it isomorphically into a subgroup of $\mathrm{E}$, therefore the kernel $\Gamma$  is torsion-free.
Because $W$ acts properly, the torsion-free kernel $\Gamma$ acts freely on $\cU(W,\cM(\Delta))$, and therefore it is the universal covering space of the space
\[\cU(W,\cM(\Delta))/\Gamma,\]
with $\Gamma$ acting as the group of Deck transformations.
The continuous map
\[\cU(W,\cM(\Delta))\to \cT,\quad [w,X]\to \sigma_w X\sigma_w \]
is $W$-equivariant (the action on $\cT$ is induced by $\chi$), so it descends to an   $\mathrm{E}$-equivariant map
\[\cU(W,\cM(\Delta))/\Gamma\to \cT,\quad [w\Gamma,X]\to \sigma_{w\Gamma}X\sigma_{w\Gamma}.\]
Because $\Delta$  ---which is identified by the map--- is a fundamental chamber for the action of $\mathrm{E}$ on both spaces, the map is a homeomorphism. Therefore $\Gamma$ is the fundamental group of $\cT$, which is aspherical, and the covering map lifts the arrangement structure  from $\cT$ to
$\cU(W,\cM(\Delta))$ (the collection of manifolds with clean intersections, disregarding that they are submanifolds of an ambient manifold).
\end{proof}
\begin{remark}\label{rem:connected-intersection-2}
The proof of Theorem \ref{thm:mirror} refines Remark \ref{rem:connected-intersection}, in the sense that for
\[\bigcap_i\cT_{\p_i,j}=\cT_\p,\quad \p=\bigcap_i\p_i,\]
just non-empty pairwise intersections are required.
\end{remark}
\subsection{Embedding into  the tridiagonal isospectral manifold and the permutahedron}\label{ssec:embedding}
In this section we construct a canonical embedding of $\cT$ into $\cT_{n}$ which is compatible with the Toda vector field and with the mirror structures $\cM(\Delta)$ and $\cM(\Delta_{n})$; in fact, $\cT_{n}$ can be replaced by an appropriate tridiagonal isospectral set. The embedding, defined by means of parallel transport, restricts to diagonal matrices to the standardization map.

We start by recalling the standardization map.
On $\cS_n$ and on the set $\mathcal{S}^{\cT}$ ---identified with the  permutations of the ordered multiset defined by the spectrum  $\cT$--- we consider the weak (right) Bruhat order. For a permutation an inversion is a pair of indices $i<j$ such that $\lambda_i>\lambda_j$. The inversion set is the collection of pairs of indices corresponding to inversions. One declares $\Lambda\leq \Lambda'$ if the inversion set of $\Lambda$ is included in the inversion set of $\Lambda'$.  The standardization map   \cite[Theorem 8]{BB}  is an order preserving injection
\begin{equation}\label{eq:order-embdedding}
\mu: \cS^{\cT}\to \mathcal{S}_{n},\quad \Lambda\mapsto  \mu(\Lambda),
\end{equation}
where one replaces the $n_1$ appearances in order of the smallest eigenvalue by $1,\dots,n_1$, then the appearances in order of the second smallest eigenvalue by $n_1+1,\dots, n_1+n_2$, and so on.
Moreover, $\mu(\mathcal{S}^{\cT})$ is the  principal order ideal of the element obtained by writing the blocks from the last to the first one
\[n_1+\cdots +n_{k-1}+1,\dots ,n_1+\cdots +n_{k},n_1+\cdots +n_{k-2}+1,n_1,\dots n_1+\cdots +n_{k-1},\dots\]

We provide a geometric version of $\mu$ that  embeds $\cT$ into $\cT_{n}$. This canonical embedding  comes by parallel transport with respect to a natural Ehreshmann connection. The construction requires some elementary analysis of the action of $\E$ on the subspace of symmetric tridiagonal matrices, and its relation to tridiagonal isospectral sets.

The subspace $\ss_\H$ is stratified by the the orbit type stratification for the action of $\E$. A stratum is a subset of
 symmetric tridiagonal matrices with the same centralizer. Such centralizers are subgroups generated by the fundamental reflections $\sigma_{i_1},\dots,\sigma_{i_s}$. The closure of their corresponding stratum $\cI$ is the subspace
 \[\overline{\cI}=\{X\in \ss_\H\,|\, X_{i_1+1,i_1}=\cdots =X_{i_s+1,i_s}=0\}.\]
 The stratum $\cI$ is the open subset of matrices whose only zero off-diagonal entries are the ones in the equation above.
Strata, collections of fundamental reflections, and partitions are in correspondence.  Each stratum $\cI$ comes with the block subalgebra $\so(\p)$ of its corresponding partition.
Let $X\in \ss_\H$ and let $\cI$ denote the stratum containing $X$. The intersection
of the $\So(\p)$-conjugacy class of $X$ with $\ss_\H$ is a product tridiagonal isospectral submanifold $\cT_X$. (Each block matrix of $X$ has non-zero subdiagonal entries, and, therefore, the spectrum is simple within the block). The dimension of $\cT_X$ is $n-d$, where $d$ is the codimension of $\cI$ in $\ss_\H$ (or  $d+1$ is the number of blocks), and $\cT_X$ is contained in the closure of $\cI$. The strata in the closure of $\cI$ have to be added because the conjugacy class has matrices with more off-diagonal zeros; in particular, all diagonal matrices with the spectrum of $X$.

We regard the trace as an inner product on $\ss_\H$.
The restriction of the trace to a stratum is just the sum of the traces on its blocks.

\begin{definition}\label{def:Ehreshmann} Let $X\in \cI$. We define $H_X\subset \ss_\H$ to be the orthogonal complement in $\cI$ of the tangent space $T_X\cT_X$.
We denote the collection of this subspaces by $\mathcal{H}$. By construction, all subspaces have dimension $d$.
\end{definition}

We need yet another piece of structure defined by $\cI$. We consider the action of the permutation group  $\cS_n$ on traceless diagonal matrices $\D$. The negative (Weyl) chamber $\Delta_\D$ are  diagonal matrices with entries ordered increasingly. To a stratum/partition  we associate to the Young subgroup $\cS_{n,\p}$ of permutations preserving each block, and the negative chamber for the action of $\cS_{n,\p}$ on  $\D$,
\[\Delta_\p=\{ \lambda_1\leq \cdots\leq \lambda_{n_1},\lambda_{n_1+1}\leq \cdots \leq\lambda_{n_1+n_2},\dots, \lambda_{n_{k-1}+1}\leq\cdots \leq\lambda_n\}.\] We denote its interior by $\mathrm{int}(\Delta_{\p})$. Let
\[\lambda:\cI\to \mathrm{D},\quad X\mapsto \lambda(X),\]
 be the map that  sends $X$ to the diagonal matrix defined by the ordered eigenvalues of the first block, then the ordered eigenvalues of the second block, and so on.

The strata, the collection of tangent spaces $T_X\cT_X$, and  $\cH$ interact as follows.
\begin{lemma}\label{lem:Ehreshmann}   Let $\cI$ be a stratum of $\ss_\H$ for the action of $\E$. Then
\begin{enumerate}
 \item the map $\lambda:\cI\to \mathrm{D}$ is a submersion onto its image $\mathrm{int}(\Delta_{\p})$ with fiber $\lambda^{-1}(\lambda(X))=\cT_X\cap \cI$;
 \item the restriction of $\cH$ to $\cI$ is an Ehreshmann connection for $\lambda:\cI\to \mathrm{D}$;
 \item the parallel transport map is compatible with the Toda vector field.
\end{enumerate}
\end{lemma}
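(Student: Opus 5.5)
The plan is to work block by block. A point $X\in\cI$ is a block diagonal matrix in $\sl(\p)$, and every block $X_b$ is a Jacobi matrix, meaning that all of its subdiagonal entries are non-zero. Up to the action of $\E$ we may take these entries to be positive. For item (1) I will use the classical spectral (Moser) coordinates on a single block. A Jacobi matrix of size $m$ with positive subdiagonal is determined by its ordered simple eigenvalues $\mu_1<\cdots<\mu_m$ together with the first components of its normalized eigenvectors. These components are positive weights in an open simplex, and the correspondence is a real-analytic diffeomorphism.

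Taking products over the blocks, and imposing only the single condition that the total trace vanishes, identifies each sign component of $\cI$ with $\mathrm{int}(\Delta_\p)\times(\text{product of open simplices})$. In these coordinates $\lambda$ is the projection onto the first factor. Hence $\lambda$ is a submersion onto $\mathrm{int}(\Delta_\p)$. Its fiber through $X$ is the set of block matrices in $\cI$ with the same block spectra, which is exactly $\cT_X\cap\cI$.

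As a consistency check on dimensions, the fiber has dimension $n-d-1$ and $\dim\mathrm{int}(\Delta_\p)=n-1$. Their sum $2n-d-2$ is $\dim\cI=(n-1)+(n-1-d)$, which counts $n-1$ traceless diagonal directions and $n-1-d$ non-zero subdiagonal directions.

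Item (2) then follows from (1). Since $\lambda$ is a submersion, $T_X\cT_X=\ker d\lambda_X$ is a smooth vertical distribution on $\cI$. Its orthogonal complement $H_X$ for the trace form is therefore smooth, complementary to the fibers, and of constant rank. I will make $H_X$ explicit as follows. The fiber is cut out by the block spectral invariants $\mathrm{tr}(X_b^k)$, so $H_X$ is spanned by their gradients, namely the orthogonal projections onto tridiagonal block matrices of the powers $X_b^{k-1}$. Under $d\lambda$ these vectors map isomorphically onto $T\mathrm{D}$. This description also gives horizontal lifts of constant vector fields on $\mathrm{D}$ as explicit polynomial vector fields.

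Item (3) is the main obstacle. It asks that the Toda flow $\varphi_t$, which preserves fibers, should map horizontal spaces to horizontal spaces. Equivalently, for every horizontal lift $\tilde v$ the bracket $[\cY,\tilde v]$ should vanish, or at least be vertical; the latter already suffices, since $\cY$ is vertical. I would try two routes, in this order.

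The first route works in Moser coordinates. There the Toda flow is the explicit fiberwise map $c_i\mapsto c_i e^{t\mu_i}/(\sum_j c_j^2e^{2t\mu_j})^{1/2}$, whose formula depends on the base point $\mu$. The task is to check that its differential preserves the trace-orthogonal complement of the fibers.

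If that computation is opaque, the second route uses the spanning description of $H_X$ from item (2). The gradients of $\mathrm{tr}(X_b^k)$ are tied to the Toda hierarchy $X'=[X,\pi_\so X_b^{k}]$. The flows of this hierarchy commute with $\cY$ and preserve the $\mathrm{tr}(X_b^k)$, and $\mathrm{ad}$-invariance of the trace should let one compare $d\varphi_t(\nabla\,\mathrm{tr}\,X_b^k)$ with $\nabla\,\mathrm{tr}\,X_b^k$ at $\varphi_t(X)$ modulo vertical vectors.

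A fallback is to settle for the weaker statement that parallel transport maps Toda trajectories in one fiber to Toda trajectories in another. For that it is enough to know that the Toda field is preserved along horizontal curves up to reparametrization, which one can read off from the coordinate expression above.
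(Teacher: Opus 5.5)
Your items (1) and (2) are fine; they are essentially the paper's argument made explicit. The paper quotes Tomei's blockwise submersion and takes the trace-orthogonal complement of the leaves $\cT_X$. Your Moser coordinates instead exhibit each sign component of $\cI$ as $\mathrm{int}(\Delta_\p)\times(\text{open simplices})$, with $\lambda$ the projection, and your count $\dim\cT_X=n-d-1$, $\dim H_X=n-1$ is correct.

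The gap is item (3), and it begins with your reformulation. Since $\cY$ is $\lambda$-related to $0$ and a horizontal lift $\tilde v$ is $\lambda$-related to $v$, the bracket $[\cY,\tilde v]$ is \emph{always} vertical. So ``at least vertical'' is vacuous, and invariance of $\cH$ requires $[\cY,\tilde v]=0$. For the same reason, comparing $d\varphi_t(\nabla\,\mathrm{tr}X_b^k)$ with $\nabla\,\mathrm{tr}X_b^k$ modulo vertical vectors says nothing about horizontality.

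Moreover, $[\cY,\tilde v]=0$ is false, so neither route can close. The Euler field $E(X)=X$ is tangent to $\overline{\cI}$. It is trace-orthogonal to $T_X\cT_X\subset\{[X,U]\}$, because $\mathrm{tr}(X[X,U])=0$. It also satisfies $d\lambda(E_X)=\lambda(X)$. Hence parallel transport along a radial segment from $D$ to $cD$ is $X\mapsto cX$. Since $\cY(X)=[X,\pi_\so X]$ is homogeneous quadratic, $\varphi_t(cX)=c\,\varphi_{ct}(X)$. Equivalently $[\cY,E]=-\cY\neq 0$, and the transport pushes $\cY$ forward to $c^{-1}\cY$.

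Concretely, take $n=2$ and $X=\left(\begin{smallmatrix}a&b\\ b&-a\end{smallmatrix}\right)$. The fibres are the circles $a^2+b^2=r^2$ and the horizontal direction is radial. The Toda field $a'=2b^2$, $b'=-2ab$ reads $\theta'=-2r\sin\theta$, which depends on $r$.

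The paper's one-line proof of $[\cY,\cH]\subset\cH$ does not fill this hole. It pairs the matrix commutator $[[X,\pi_\so X],V]$ with $[X,U]$, which is not the Lie derivative of a section of $\cH$ along $\cY$. That pairing in fact vanishes trivially, because a commutator of two symmetric matrices is skew.

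Your fallback cannot simply be read off either, and it fails outside special directions. Extend the transport to the compact fibres of $\widehat{\cI}$, as the paper does right afterwards. A smooth diffeomorphism of fibres carrying Toda trajectories to Toda trajectories must conjugate the linearizations at corresponding diagonal zeros up to a nonzero scalar. By item (3) of the adapted-coordinates proposition these linearizations have eigenvalues $\lambda_{i+1}-\lambda_i$, and $\D$ is a horizontal leaf. So the ratios of consecutive gaps inside each block would have to stay constant along the segment. That fails for a generic segment in $\mathrm{int}(\Delta_\p)$ once some block has size at least three.

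What survives is compatibility up to a constant time change along directions spanned by $\lambda(X)$ (and by the identity, in the $\gl$ version). Item (3) as stated therefore cannot be proved: it must be weakened, or the connection changed. The downstream claim that $\mu$ carries $\cY$ to $\cY$ is affected in the same way.
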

\begin{proof}
 For any  $X\in \cI$ the intersection of its $\So(\p)$-conjugacy class in $\sl$ with the closure $\overline{\cI}$ is the product tridiagonal isospectral manifold $\cT_X$. Because this is a tranverse intersection,
\[T_X\cT_X=\{[X,U]\,|\, U\in \so(\p),\,[X,U]\in \ss_\H\}.\]
 The submanifolds $\cT_X$, $X\in\cI$, define a foliation on an open subset of the closure of $\cI$. In particular, its  intersection with $\cI$ is a foliation  integrating  the tangent spaces in the definition of $\cH$. By construction, $\cH$ on $\cI$ is the (smooth) distribution of orthogonal subspaces (within $\cI$).

The map $\lambda:\cI\to \mathrm{D}$ is a product map where on each block a matrix is sent to the diagonal matrix with its ordered eigenvalues. This is a submersion onto the corresponding negative chamber of matrices with the given size with fiber the open subset of the tridiagonal isospectral manifold of matrices with non-zero off-diagonal entries in the block \cite[Remark 2 after Lemma 2.2]{To}. Therefore $\lambda:\cI\to \mathrm{D}$ is a submersion onto its image $\mathrm{int}(\Delta_\p)$.

The compatibility of the parallel transport map with the Toda vector field is equivalent to
$[\cY,\cH]\subset \cH$.
A matrix $V$ in the closure of $\overline{\cI}$ is in $\cH_X$ if and only if
\[\langle V,[X,U]\rangle=0,\quad  U\in \so(\p),\,[X,U]\in \ss_\H,\]
where the inner product is the trace. Because the tangent bundle of $\cT_X$ is involutive
\[\langle [[X,\pi_{\so}X],V],[X,U]\rangle=-\langle V,[[X,\pi_{\so}X],[X,U]]\rangle=-\langle V,[X,U_2]\rangle=0.\]
\end{proof}

We need a version of Lemma \ref{lem:Ehreshmann} that applies not just to $\cT_X\cap \cI$, but to the whole product isospectral manifold $\cT_X$.
For a stratum $\cI$ we denote by $\widehat{\cI}$ the result of adding the closure of the fibers of $\lambda$. In other words, for each $X\in \cI$ we take the whole product tridiagonal isospectral manifold $\cT_X$. This is an open subset of the closure $\cI$.
The proof of Lemma \ref{lem:Ehreshmann} shows that
\[\lambda:\widehat{\cI}\to \mathrm{int}(\Lambda_{\p})\]
is a submersion whose compact connected fibers are the $\cT_X$, $X\in \cI$.

\begin{proposition}\label{pro:Ehreshmann} The restriction of $\cH$ to $\widehat{\cI}$ is an Ehreshmann connection for
\[\lambda:\widehat{\cI}\to \mathrm{int}(\Lambda_{\p}).\]
Moreover, $\lambda$ and $\cH$ are equivariant with respect to the action of the subgroup  $\E_{\p}\subset \E$ that centralizes $\cI$ and of the Young subgroup $\cS_{n,\p}$ (the latter acting trivially on the base).
\end{proposition}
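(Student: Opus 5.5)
The plan has three parts: show that $\cH$ is a genuine connection on all of $\widehat{\cI}$, establish completeness of horizontal lifts, and then read off equivariance directly from the definitions.

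\emph{Connection.} An Ehresmann connection for a submersion needs a smooth distribution complementary to the vertical bundle. The vertical bundle of $\lambda:\widehat{\cI}\to\mathrm{int}(\Delta_\p)$ is $T\cT_X$, and $\lambda$ is a submersion on $\widehat{\cI}$ with fibers $\cT_X$. I would therefore define $\cH_X$ at every $X\in\widehat{\cI}$, including the new points with extra zero subdiagonal entries, as the trace-orthogonal complement of $T_X\cT_X$ inside $\overline{\cI}$. Since $\cT_X$ is a submanifold of dimension $n-1-d$ at every point of $\widehat{\cI}$ and the fibers form a smooth foliation of the open set $\widehat{\cI}\subset\overline{\cI}$ (by the submersion), the orthogonal complement is a smooth rank-$d$ distribution. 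It is transverse to the fibers, so $d\lambda$ restricts to an isomorphism $\cH_X\to T_{\lambda(X)}\D$ onto the tangent space of the base. On $\cI$ it agrees with Definition \ref{def:Ehreshmann}, so by density and continuity this is the natural extension.

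\emph{Completeness.} Because $\lambda$ is a proper submersion, the fibers $\cT_X$ being compact, horizontal lifts of paths in $\mathrm{int}(\Delta_\p)$ exist for all time. An integral curve of the horizontal lift of a compactly supported vector field on the base cannot escape, since it stays in $\lambda^{-1}$ of a compact set, which is compact. Properness itself is the point that needs checking, and I expect it to be the main obstacle. A bounded sequence of matrices whose ordered block eigenvalues converge to an interior point of $\Delta_\p$ has block spectra that are simple in the limit. The limit matrix then lies in some $\cT_Y$, $Y\in\cI$, by the block-wise version of the fact that simple-spectrum tridiagonal isospectral sets are closed manifolds. In other words, block-wise simple spectrum is exactly what guarantees that the fiber $\lambda^{-1}(\lambda(Y))$ is the whole product manifold $\cT_Y$. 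I would take care to verify that points of $\widehat{\cI}\setminus\cI$ do not belong to two different fibers: the map $\lambda$ extends continuously to $\widehat{\cI}$ by ordered block spectra, so each point has a single value.

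\emph{Equivariance.} The group $\E_\p$ centralizes $\cI$, and more generally $\E$ acts by conjugation by sign matrices, which are orthogonal. Such a conjugation preserves the trace inner product, preserves $\ss_\H$ and $\overline{\cI}$, preserves block spectra, and sends $\cT_X$ to $\cT_{\sigma X\sigma}$. Hence $\lambda\circ\sigma=\lambda$ and $\sigma_*\cH_X=\cH_{\sigma X\sigma}$. For the Young subgroup $\cS_{n,\p}$, acting on diagonal matrices by permuting entries within blocks, the base map $\lambda$ takes ordered eigenvalues, so it is invariant, and the action on the base is trivial. Compatibility of this action with the fibers and with $\cH$ is then automatic, because $\cH$ is defined intrinsically from the fibers and the inner product.
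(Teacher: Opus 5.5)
Your argument proves a weaker statement than the one claimed. In the paper, $\cH$ is defined at \emph{every} point of $\ss_\H$ by Definition \ref{def:Ehreshmann}, relative to that point's own stratum. Take $Y\in\widehat{\cI}\setminus\cI$ lying in a smaller stratum $\cI'$, whose partition $\p'$ refines $\p$. Then $\cH_Y$ is the trace-orthogonal complement of $T_Y\cT_Y$ inside $\overline{\cI'}$. Here $\cT_Y$ is the $\So(\p')$-class of $Y$ intersected with $\ss_\H$, which is a proper submanifold of the fiber $\lambda^{-1}(\lambda(Y))=\cT_X$, $X\in\cI$.

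You instead \emph{redefine} the horizontal space at $Y$ as $\cH'_Y=(T_Y\cT_X)^{\perp}\cap\overline{\cI}$, and justify this ``by density and continuity''. Your phrase ``$\cT_X$ of dimension $n-1-d$ at every point of $\widehat{\cI}$'' silently replaces $\cT_Y$ by the fiber. That only shows $\cH'$ is the continuous extension of $\cH|_{\cI}$. It does not show that the stratum-wise $\cH$ equals $\cH'$, i.e.\ that $\cH$ is continuous across strata, and this is not automatic (compare Remark \ref{rem:Ehreshman2}).

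The identity $\cH'_Y=\cH_Y$ is exactly what the paper's proof establishes, and it is what gets used later. The compatibility $\mu_\p|_{\cT_{\p\cap\p'}}=\mu_{\p\cap\p'}$ in Theorem \ref{thm:parallel-transport} requires that horizontal lifts for the $\p$-connection at points of the smaller stratum stay in its closure and agree with those of the $\p\cap\p'$-connection. By comparison, the properness you single out as the main obstacle is routine.

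The missing step follows from the equivariance you already have, applied to the isotropy group of $Y$.
\begin{enumerate}
\item The group $\E_{\p'}$ fixes $Y$ and preserves both the fiber through $Y$ and the trace, so it acts linearly on $\cH'_Y$.
\item Since $\lambda\circ\sigma=\lambda$, the isomorphism $d\lambda_Y\colon\cH'_Y\to\D$ intertwines this action with the trivial one. Hence $\E_{\p'}$ fixes $\cH'_Y$ pointwise, and $\cH'_Y$ lies in the $\E_{\p'}$-fixed subspace of $\overline{\cI}$, which is $\overline{\cI'}$.
\item Because $\cT_Y\subset\cT_X$, the space $\cH'_Y$ is orthogonal to $T_Y\cT_Y$, so $\cH'_Y\subset\cH_Y$.
\item Both spaces have dimension $\dim\D$, since each additional zero subdiagonal entry lowers both $\dim\overline{\cI'}$ and $\dim\cT_Y$ by one. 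Hence they agree.
\end{enumerate}
The paper phrases the same point via the isotypic splitting $\overline{\cI}=\overline{\cI'}\oplus G$ for $\E_{\p'}$, together with $T_Y\cT_X\cap\overline{\cI'}=T_Y\cT_Y$.

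A smaller point: conjugation by $\cS_{n,\p}$ does not preserve $\overline{\cI}$ once a block has size at least three, so your ``compatibility is automatic'' needs a precise meaning. What is actually used later is that diagonal matrices are horizontal: $\cT_\Lambda$ is a point, so $\cH_\Lambda=\D$. This is what makes the permuted diagonal segments horizontal lifts.
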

\begin{proof}
The collection of subspaces $\cH$ on $\ss_\H$ is $\E$-equivariant because  the action of $\E$ preserves each stratum of $\ss_\H$, each tridiagonal isospectral submanifold there, and the trace. 
The same applies for the action of the Young subgroup $\cS_{n,\p}$ on $\overline{\cI}$.

The fibers of $\lambda:\widehat{\cI}\to \Delta_\p$ foliate the open subset $\widehat{\cI}\subset \overline{\cI}$. We define its orthogonal complement $\cH_2$ with respect to the trace in $\overline{\cI}$. This is a smooth distribution which equals $\cH$ on $\cI$. We claim that $\cH_2$ agrees with $\cH$ on $\widehat{\cI}$, what would prove  the proposition.

Let $X\in \cI$ and $Y\in \cT_X$ a matrix in a stratum $\cI'$ different from $\cI$.  Let $\E_{\p'}\subset \E$ be the centralizer of $\cI'$. The subspace $\overline{\cI}$  splits orthogonally into the trivial isotypic block and the sum of the remaining isotypic blocks for the action of $\E_{\p'}$,
\[\overline{\cI}=\overline{\cI'}\oplus G.\]
The tangent space of $\cT_X$ at $Y$ is a submodule and therefore its analogous splitting is obtained by intersection
\[T_Y\cT_X=(T_Y\cT_X\cap \overline{\cI'})\oplus (T_Y\cT_X\cap G).\]
In particular $T_Y\cT_X\cap G$ is perpendicular to $\overline{\cI'}$. This means that
${\cH_2}$ at $Y$ is  the orthogonal complement with respect to the trace in $\cI'$ of the subspace
$T_Y\cT_X\cap \overline{\cI'}$. This tangent space is the one of $\cT_Y$. Near $Y$, taking the $\So(\p')$-conjugacy class of $Y$ and intersecting with $\cI'$ is the same as  taking its $\So(\p)$-conjugacy class and intersecting with $\cI'$. Therefore $\cH$ and $\cH_2$ agree at $Y$.
\end{proof}

\begin{definition}\label{def:parallel-transport} Let $X$ and $X'$ be points in $\cI$. We define the diffeomorphism
 \begin{equation}\label{eq:parallel-transport}
  \mu_\p:\cT_X\to \cT_{X'}
 \end{equation}
by parallel transport with respect to $\cH$ over the segment joining $\lambda(X)$ to $\lambda(X')$.
\end{definition}
\begin{remark}\label{rem:Ehreshman2}
We do not know whether the equidimensional family of subspaces $\cH$ is smooth everywhere in $\ss_\H$. We do not know either a closed formula for $\mu_\p$.
\end{remark}

 By Proposition \ref{pro:Ehreshmann} the diffeomorphism $\mu_\p$ is equivariant with respect to $\E_{\p}$. Also, the manifold $\widehat{\cI}$ contains the diagonal matrices $\D$, and $\D$ is an integral leaf of $\cH$.  For $P\in \cS_{n,\p}$ and $X,X'\in \cI$,  the segment
 $[\lambda(X),\lambda(X')]\subset \mathrm{int}(\Delta_{\p})$ lifts to the segment $[P^T\lambda(X)P,P^T\lambda(X')P]\subset \D$.

To promote $\mu_\p$ to a map defined on the tridiagonal isospectral set
\[\cT=\bigcup_{\p\in \mathfrak{M}}\cT_\p,\]
we have to choose appropriate intervals so that the parallel transport on submanifold components is compatible.
But before that, we need to discuss   the tridiagonal isospectral sets in which we can embed $\cT$.

The diagonal matrix $\Lambda^+\in \cT$ with spectrum ordered increasingly belongs to the negative chamber $\Delta_\D$. Its spectrum determines the stratum of the orbit type stratification of $\D$ with respect to the action of $\cS_n$ to which it belongs. We take $\Lambda_\star\in \D$ belonging to a stratum whose closure contains $\Lambda^+$ and denote by $\cT_\star$ the tridiagonal isospectral set it defines. Typically, $\Lambda_\star$ will be in the open stratum (different eigenvalues) so $\cT_\star$ is a tridiagonal isospectral manifold. We denote by $I\subset \Lambda_\D$ the segment joining $\Lambda^+$ to $\Lambda_\star$. Going along the segment in the opposite direction changes the eigenvalues of $\Lambda_\star$ in an affine fashion and for $t=0$ it collapses some adjacent eigenvalues to give $\Lambda^+$.

Let $\p\in \mathfrak{M}$ and let $\Lambda^+_\p$ denote the matrix in $\cT_\p\subset \cT$ with spectrum ordered increasingly. We select the permutation $P_\p\in \cS_n$ such that $P^T_\p$ takes the occurrences of the smallest eigenvalues in $\Lambda^+$ to its occurrences of in $\Lambda^+_\p$ in order, then the occurrences of the second smallest eigenvalue, and so on. (The permutation with the minimum number of transpositions taking $\Lambda^+$ to $\Lambda^+_\p$).
The submanifold component $\cT_\p$ sits inside some $\widehat{\cI}$. That $\Lambda_\star$ be in the closure of the stratum of $\Delta_{\D}$ containing $\Lambda^+$ is equivalent to $P^T\Lambda_\star P\in \mathrm{int}(\Delta_{\p})$.
Therefore we can consider  the parallel transport over $P^T_\p I P_\p$,
\[\mu_\p:\cT_\p\to \lambda^{-1}(P^T\Lambda_\star P)\subset \cT_\star.\]

The following result is an expanded version of Theorem \ref{thm:mirror-embedding} in the Introduction.
\begin{theorem}\label{thm:parallel-transport}
The map
\begin{equation}\label{eq:parallel-transport-isospectral}
 \mu:\cT\to \cT_\star,\quad \mu|_{\cT_\p}=\mu_\p
\end{equation}
is a well-defined $\E$-equivariant embedding that
\begin{enumerate}
\item takes the Toda vector field on $\cT$ to the Toda vector field on $\cT_\star$;
 \item  restricts to diagonal matrices to the standardization map $\cS^{\cT}\to \cS^{\cT_\star}$;
 \item takes the mirror structure $\cM(\Delta)$ to the mirror structure $\mu(\cT)\cap\cM(\Delta_\star)$,
 \begin{itemize}
  \item  a  mirror in $\mu(\Delta)$ is a non-empty intersection of $\mu(\Delta)$ with a mirror in $\Delta_\star$, this giving an injection $\mu:\cM(\Delta)\to \cM(\Delta_\star)$;
  \item an intersection of mirrors in $\cM(\Delta)$ is obtained by intersecting $\mu(\Delta)$ with the intersection of the mirrors in $\cM(\Delta_\star)$ that correspond to those in $\cM(\Delta)$ by $\mu$.
  \end{itemize}
\end{enumerate}
\end{theorem}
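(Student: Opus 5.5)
The proof should first settle well-definedness on overlaps of submanifold components, then read off the listed properties from Proposition \ref{pro:Ehreshmann} and Lemma \ref{lem:Ehreshmann}.

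\textbf{Well-definedness.} By Proposition \ref{lem:arrangement} and Remark \ref{rem:connected-intersection-2}, if $\p,\p'\in\mathfrak{M}$ and $\cT_\p\cap\cT_{\p'}\neq\emptyset$, then the intersection is $\cT_{\p\cap\p'}$. This is a connected component of the fixed-point set of the extra reflections $\E_{\p\cap\p'}$ acting on $\cT_\p$.

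\begin{enumerate}[(i)]
\item Since $\mu_\p$ is $\E_\p$-equivariant (Proposition \ref{pro:Ehreshmann}), it maps $\cT_{\p\cap\p'}$ into a fixed component of $\E_{\p\cap\p'}$ in $\cT_\star$.
\item I will show that on this fixed component the connection $\cH$ coincides with the connection built from the finer stratum $\widehat{\cI}_{\p\cap\p'}$. This is the isotypic splitting argument in the proof of Proposition \ref{pro:Ehreshmann}: horizontal vectors at a point of the smaller stratum lie in $\overline{\cI'}$.
\item Hence $\mu_\p|_{\cT_{\p\cap\p'}}$ is parallel transport for the partition $\p\cap\p'$ along the segment $P_\p^T I P_\p$. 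Symmetrically, $\mu_{\p'}|_{\cT_{\p\cap\p'}}$ is parallel transport along $P_{\p'}^T I P_{\p'}$.
\item It remains to check that these two segments have the same image in $\mathrm{int}(\Delta_{\p\cap\p'})$ under the Young subgroup $\cS_{n,\p\cap\p'}$. The horizontal lift of a segment in $\D$ is the segment itself, and $\cS_{n,\p\cap\p'}$ acts trivially on the base. So this reduces to a combinatorial statement: $P_\p$ and $P_{\p'}$, the minimal permutations carrying $\Lambda^+$ to $\Lambda^+_\p$ and $\Lambda^+_{\p'}$, differ by an element of $\cS_{n,\p\cap\p'}$ composed with the stabilizer of $\Lambda^+$ that fixes $\Lambda_\star$ up to order.
\end{enumerate}

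I expect the main obstacle to be step (iv): the choice of $P_\p$ must be coherent for all triples, including triple intersections. I would prove it by arguing blockwise that the eigenvalue sets on each block of $\p\cap\p'$ agree. Within each block, both segments collapse the deformed copies of a repeated eigenvalue in the same order, because the minimal permutation preserves the relative order of occurrences. This is exactly the standardization rule.

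\textbf{Remaining properties.}
\begin{itemize}
\item \emph{$\E$-equivariance.} This follows because $\E$ preserves strata, the trace, and the base point. Each fundamental reflection either lies in $\E_\p$ or acts on $\widehat{\cI}$ as a symmetry commuting with $\lambda$.
\item \emph{Toda.} Property (1) is Lemma \ref{lem:Ehreshmann}(3) on each component, extended by continuity to $\widehat{\cI}$.
\item \emph{Standardization.} For property (2), diagonal matrices form a horizontal leaf $\D$. So a diagonal $\Lambda\in\cT_\p$, written as $Q^T\Lambda^+_\p Q$ with $Q\in\cS_{n,\p}$ of minimal length, is sent to $Q^TP_\p^T\Lambda_\star P_\p Q$. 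Comparing with the definition of $\mu$ in \eqref{eq:order-embdedding}, this replaces the occurrences of each repeated eigenvalue in left-to-right order by increasing distinct values, i.e. standardization.
\item \emph{Injectivity.} Each $\mu_\p$ is a diffeomorphism onto a product tridiagonal isospectral submanifold of $\cT_\star$. Distinct components, or distinct points of $\cT_\p\setminus\cT_{\p'}$ and $\cT_{\p'}\setminus\cT_{\p}$, land in fibers $\lambda^{-1}(\cdot)$ with different block spectra, or in different strata closures. The Toda flow and the injectivity of standardization on the diagonal points confirm this separation. Since $\cT$ is compact, $\mu$ is then an embedding.
\end{itemize}

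\textbf{Mirrors.} For property (3), $\mu$ is $\E$-equivariant and sends positive subdiagonal entries to positive ones, since parallel transport stays in the stratum. Hence $\mu(\Delta)=\mu(\cT)\cap\Delta_\star$ and $\mu(\Delta^\sigma)=\mu(\cT)\cap\Delta_\star^\sigma$. By Proposition \ref{pro:mirror}, the components of both fixed sets are indexed by the block spectra of $\p_\sigma$. Standardization maps block spectra injectively, which gives the injection $\cM(\Delta)\to\cM(\Delta_\star)$. Intersections follow from Remark \ref{rem:connected-intersection-2} applied in both $\cT$ and $\cT_\star$.
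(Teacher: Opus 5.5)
Your overall route is the paper's. You restrict $\mu_{\p}$ to $\cT_{\p\cap\p'}$ and identify the restriction, via equivariance and the isotypic splitting in the proof of Proposition \ref{pro:Ehreshmann}, with parallel transport for the finer partition along $P_2^TP_{\p}^TIP_{\p}P_2\subset\Delta_{\p\cap\p'}$, where $P_2\in\cS_{n,\p}$ takes $\Lambda^+_{\p}$ to $\Lambda^+_{\p\cap\p'}$. You then compare endpoints, and read the remaining items off Proposition \ref{pro:Ehreshmann} and the injectivity of standardization on diagonal matrices.

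The trouble is step (iv), which you rightly single out as the crux but reduce to a false statement. Take $\Lambda=\{-2,1,1\}$, $\p_1=\{-2,1\},\{1\}$ and $\p_2=\{1\},\{-2,1\}$. Then $\Lambda^+_{\p_1}=\Lambda^+=\mathrm{diag}(-2,1,1)$ and $\Lambda^+_{\p_2}=\mathrm{diag}(1,-2,1)$, so $P_{\p_1}=e$ and $P_{\p_2}=(12)$. The Young subgroup of $\p_1\cap\p_2$ is trivial and the stabilizer of $\Lambda^+$ is $\{e,(23)\}$, so $P_{\p_1}$ and $P_{\p_2}$ do not differ in the way you require. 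Yet the two transports agree: with $\Lambda_\star=\mathrm{diag}(s_1,s_2,s_3)$, $s_1<s_2<s_3$, both send $\mathrm{diag}(1,-2,1)$ to $\mathrm{diag}(s_2,s_1,s_3)$. Comparing $P_{\p}$ with $P_{\p'}$ is the wrong comparison, since they have different targets $\Lambda^+_{\p}$ and $\Lambda^+_{\p'}$.

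What must be shown is $P_{\p}P_2=P_{\p'}P_2'$, where $P_2'\in\cS_{n,\p'}$ takes $\Lambda^+_{\p'}$ to $\Lambda^+_{\p\cap\p'}$. Both $P_2$ and $P_2'$ are unique because the blocks have simple spectrum. The missing reason is that a permutation in $\cS_{n,\p}$ cannot change the left-to-right order of the occurrences of any eigenvalue. This holds because $\p$ is simple, so each eigenvalue occurs at most once in each block of $\p$. Hence $P_{\p}P_2$ and $P_{\p'}P_2'$ both take $\Lambda^+$ to $\Lambda^+_{\p\cap\p'}$ while preserving the order of occurrences, and such a permutation is unique. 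This is the paper's assertion that $P_2^TP_{\p}^T$ is the minimal permutation from $\Lambda^+$ to $\Lambda^+_{\p\cap\p'}$, and it yields $\mu_{\p}|_{\cT_{\p\cap\p'}}=\mu_{\p\cap\p'}=\mu_{\p'}|_{\cT_{\p\cap\p'}}$.

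Your heuristic about collapsing repeated eigenvalues ``within each block'' does not supply this, since no eigenvalue repeats inside a block of $\p\cap\p'$. The relevant order is across blocks, and simplicity of $\p$ is what controls it. Your standardization item tacitly needs the same observation, with $P_2$ replaced by $Q$. Once you have it, the worry about triple intersections disappears: the restriction to $\cT_{\p\cap\p'}$ depends only on $\p\cap\p'$, and gluing maps on finitely many closed pieces requires only pairwise agreement.

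A smaller point: in (i) you need equivariance under the reflections of $\E_{\p\cap\p'}$ not in $\E_{\p}$, whereas $\E_{\p}$ acts trivially on $\overline{\cI}$. Cite instead the $\E$-invariance of $\cH$ and $\lambda$ from the first lines of the proof of Proposition \ref{pro:Ehreshmann}.
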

\begin{proof}
We must show that if $\p,\p'\in \mathfrak{M}$ and $\cT_\p\cap \cT_{\p'}\neq \emptyset$ (in particular $\p$ and $\p'$ are different partitions), then $\mu_\p$ and $\mu_{\p'}$ agree on the intersection $\cT_{\p\cap \p'}$.

Let $\Lambda^+_{\p\cap \p'}\in \cT_{\p\cap \p'}$ be diagonal matrix with eigenvalues ordered increasingly.
We go from $\Lambda^+_\p$ to $\Lambda^+_{\p\cap \p'}$ by applying (the transpose of a) permutation of $P_2$ in the Young subgroup $\cS_{n,\p}$. As we saw, the horizontal lift (with respect to $\mu_\p$) at $\Lambda^+_{\p\cap \p'}$ of the segment $P^T_\p IP_\p$  is the segment
\[P_2^TP_\p^T I P_\p P_2\subset \Delta_{{\p\cap \p'}}\]
that connects $\Lambda^+_{\p\cap \p'}$ to  $P_2^TP_\p^T \Lambda_\star P_\p P_2$. Because $P_2^TP_\p^T$ is the permutation with minimum number of transpositions taking $\Lambda^+$ to $\Lambda_{\p\cap \p'}^+$, it follows that
\[{\mu_\p}|_{\cT_{\p \cap\p'}}=\mu_{\p\cap \p'}.\]
Therefore
\[\mu:\cT\to \cT_\star\]
is a well-defined embedding.
The equivariance with respect to the action of $\E$ and the compatibility with the Toda vector field follow from Proposition \ref{pro:Ehreshmann}.

By construction, $\mu$ takes a matrix $\Lambda$ to the result of replacing the $n_1$ appearances in order of its smallest eigenvalue by the  the first $n_1$ eigenvalues of $\Lambda_\star$, the $n_2$ appearances in order its second smallest eigenvalues by the eigenvalues from $n_1+1$ to $n_1+n_2$ of $\Lambda_\star$, and so on, and this proves (2).

The equivariance of $\mu$ with respect to $\E$ implies that $\mu(\Delta)$ is mapped into the fundamental domain $\Delta_\star$ of $\cT_\star$. (Signs of subdiagonal entries cannot change during the transport along the intervals). For the same reason $\mu(\Delta^\sigma)\subset \Delta^\sigma$. A connected component $\Delta^\sigma_j$ corresponds to a partition $\p_{\sigma,j}$ of $\cT$; different mirrors  correspond to different choices of (possibly repeated) spectrum in the two blocks of $\p_\sigma$. Because $\mu$ is injective (on diagonal matrices), different choices for the spectrum of $\cT$ in the blocks of $\p_\sigma$ correspond to different choices for the spectrum of $\cT_\star$ in these blocks. This implies that a mirror of $\cM(\Delta)$
is sent to the unique connected component of the intersection of a mirror of $\cM(\Delta_\star)$ with $\mu(\Delta)$. Therefore we have an induced injection $\mu:\cM(\Delta)\to \cM(\Delta_\star)$. From this it follows that the intersection of a collection of mirrors  in $\cM(\Delta)$ is the intersection with $\mu(\Delta)$ with the mirrors in $\cM(\Delta_\star)$ that correspond to those in $\cM(\Delta)$ by $\mu$. Moreover, if a collection of mirrors of $\cM(\Delta)$ in the image of $\mu$ has non-empty intersection, then the diagonal matrix whose eigenvalues are ordered increasingly within the blocks of the partition corresponding to the intersection, belongs to $\mu(\Delta)$.
\end{proof}

Let $W_\star$, $\chi_\star$, $\cU(W_\star,\cM(\Delta_\star))$ be the Coxeter system, epimorphism and contractible space associated to $\cT_\star$. We assume that the bijection for the Coxeter system $W,\chi,\cU(W,\cM(\Delta))$ is defined by composing $\mu$ with fixed bijection for $W_\star$.

\begin{corollary}\label{cor:mirror-emedding}
The embedding $\mu$ induces a monomorphism of Coxeter systems
\begin{equation}\label{eq:Coxeter-embeddig-1}
 \theta_{\mu}:W\to W_\star,\quad w_k\mapsto b^{-1}(\mu(b(w_k)))
\end{equation}
and commutative diagrams
\[ \xymatrix{ W \ar[r]^{\theta_{\mu}}\ar[d]^{\chi}     &\ar[d]^{\chi_\star} W_\star & & & \widetilde{\cT}\cong\cU(W,\cM(\Delta)) \ar[r]^{\tilde{\mu}}\ar[d]  &\ar[d]\cU(W,\cM(\Delta_\star))\cong \widetilde{\cT}_\star\\
   E\ar[r]&  E & & &  \cT\ar[r]^{\mu}&  \cT_\star
},\]
the second of which has horizontal arrows that are embeddings, the top one exhibiting $\widetilde{\cT}$ as a submanifold arrangement of $\widetilde{\cT}_\star$.
\end{corollary}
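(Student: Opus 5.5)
The plan is to build everything from the mirror-level injection $\mu:\cM(\Delta)\to\cM(\Delta_\star)$ of Theorem \ref{thm:parallel-transport}, and then pass to the basic construction by functoriality.

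\emph{Well-definedness of $\theta_\mu$.} Since the bijection for $W$ is obtained by composing $\mu$ with the fixed bijection for $W_\star$, we may regard $S$ as a subset $S'\subset S_\star$. To get a homomorphism $W\to W_\star$ we must check that the right-angled relations match on $S'$: for mirrors $M_1,M_2\in\cM(\Delta)$,
\[
M_1\cap M_2\neq\emptyset\Longleftrightarrow \mu(M_1)\cap\mu(M_2)\neq\emptyset \ \text{in}\ \Delta_\star .
\]
The forward implication is immediate because $\mu(M_i)\subset$ the corresponding mirror of $\Delta_\star$. For the converse, suppose the two $\Delta_\star$-mirrors meet. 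By the second bullet of Theorem \ref{thm:parallel-transport}(3), $\mu(M_1\cap M_2)$ equals $\mu(\Delta)$ intersected with the intersection of the two star mirrors, so it suffices to show that this set is nonempty. The intersection of the star mirrors is a product tridiagonal chamber for the partition $\p_1\cap\p_2$, and the block spectra are determined by the standardized spectra. I would exhibit the diagonal matrix whose eigenvalues are ordered increasingly within the blocks; the end of the proof of Theorem \ref{thm:parallel-transport} says such a matrix lies in $\mu(\Delta)$. This compatibility of block spectra is the main obstacle. I would handle it through the standardization combinatorics: because $\mu$ restricted to diagonal matrices is injective and order preserving, block spectra of $\cT_\star$ that are compatible pull back to block spectra of $\cT$ that are compatible. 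The inductive juxtaposition argument from the proof of Theorem \ref{thm:mirror} then produces the common diagonal point.

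\emph{$\theta_\mu$ is a monomorphism.} Given the relations match on $S'$, the subgroup of $W_\star$ generated by $S'$ is the special (standard parabolic) subgroup $W_{\star,S'}$. For right-angled Coxeter groups, and indeed for general Coxeter groups, this special subgroup is itself a Coxeter group with presentation given by the restricted relations (Bourbaki; see also \cite{Da1}). Hence $\theta_\mu:W\to W_{\star,S'}$ is an isomorphism onto its image. The left square commutes, $\chi_\star\circ\theta_\mu=\chi$, because both sides send $w_a$ to the fundamental reflection $\sigma$ defining the mirror, and $\mu$ maps $\Delta^\sigma$ into $\Delta_\star^\sigma$.

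\emph{The right square and the embedding.} Define
\[
\tilde\mu[w,X]=[\theta_\mu(w),\mu(X)].
\]
This is well defined because the equivalence relation $(w,X)\sim(w',X)$ with $w^{-1}w'\in W_X$ maps to the star relation, since the mirrors containing $X$ map to star mirrors containing $\mu(X)$. It is $\theta_\mu$-equivariant, and commutativity with the covering maps $[w,X]\mapsto\sigma_wX\sigma_w$ follows from the $\E$-equivariance of $\mu$ together with the left square.

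Injectivity of $\tilde\mu$ reduces to the following: if $\theta_\mu(w)^{-1}\theta_\mu(w')\in W_{\star,\mu(X)}$, then $w^{-1}w'\in W_X$. Write $W_{\star,\mu(X)}$ for the special subgroup generated by the star mirrors through $\mu(X)$. By the third bullet of Theorem \ref{thm:parallel-transport}(3), the mirrors of $\Delta$ through $X$ are exactly the star mirrors through $\mu(X)$ that lie in $S'$. Then, using the standard identity for special subgroups $W_{\star,S'}\cap W_{\star,T}=W_{\star,S'\cap T}$, injectivity of $\theta_\mu$ gives the claim.

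Since $\mu$ is an embedding of arrangements, $\tilde\mu$ is locally, on each tile, a copy of $\mu$. So $\tilde\mu$ is a closed injective map that is locally an embedding, hence an embedding. It carries each lifted submanifold component onto a submanifold of $\cU(W_\star,\cM(\Delta_\star))\cong\widetilde{\cT}_\star$, with clean intersections inherited from $\cT_\star$ via the covering. This exhibits $\widetilde{\cT}$ as a submanifold arrangement of $\widetilde{\cT}_\star$.
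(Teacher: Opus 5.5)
Your proposal is correct and follows the route the paper intends. The paper gives no separate proof: the corollary is read off from the mirror injection and the intersection correspondence in Theorem \ref{thm:parallel-transport}(3), whose closing sentence supplies exactly the converse direction you need for the right-angled relations, and then from functoriality of the basic construction. The one step you leave vague is sound for a concrete reason: the standardized label set of a block is a union of initial segments of each eigenvalue's label range. Hence inclusion of the first-block sets of two star mirrors is equivalent to inclusion of the corresponding first-block multisets for $\cT$, and that is precisely what makes the two mirrors of $\Delta$ meet. Your remaining details, the identity $W_{\star,S'}\cap W_{\star,T}=W_{\star,S'\cap T}$ for injectivity of $\tilde\mu$ and closedness via local finiteness of the tiles, fill in steps the paper leaves implicit.
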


\begin{proof}[Proof of Theorem \ref{thm:mirror-embedding}]
For simplicity's sake, in the introduction we worked with the tridiagonal isospectral manifold $\cT_{n}\subset \gl$.
The results in this section (and in the entire manuscript) hold if we drop the trace zero requirement and  replace $\sl$ by $\gl$ as ambient space.
\end{proof}

We regard the permutahedron $\mathcal{P}_n\subset \R^n$ as the convex hull of the vectors whose coordinates corresponds to permutations.

\begin{corollary}\label{cor:convex-embedding} The embedding $\mu$ induces a canonical homeomorphism from $\Delta$ to a subcomplex of the permutahedron $\mathcal{P}_n$
\[C:\Delta\mapsto \mathcal{P}_n,\quad X\mapsto \pr_\D(Q\Lambda_n Q^T),\quad \mu(X)=Q^T\Lambda_n Q,\]
\end{corollary}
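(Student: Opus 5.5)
The plan is to reduce the statement to the classical simple-spectrum fact: the diagonal projection of the positive chamber of the tridiagonal isospectral manifold is a homeomorphism onto the permutahedron. That is, for $\cT_n$ (with $\Lambda_\star=\Lambda_n$, the eigenvalues $1,\dots,n$), the map
\[\Delta_n\to \mathcal{P}_n,\quad Q^T\Lambda_n Q\mapsto \pr_\D(Q\Lambda_n Q^T)\]
is a homeomorphism that sends diagonal matrices to the corresponding permutation vectors. Faces of $\Delta_n$ (intersections of mirrors of $\cM(\Delta_n)$) correspond to faces of $\mathcal{P}_n$. I take this from \cite{To} and \cite[Section 9]{Da2}. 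It comes from the moment-map/convexity picture: the diagonal of $Q\Lambda_nQ^T$ is the diagonal of the isospectral matrix read through the orthogonal matrix, and on the positive chamber it is injective, with image the convex hull of the permutation vectors.

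First, note that $C$ is well defined. If $\mu(X)=Q^T\Lambda_nQ$, then $Q$ is determined up to left multiplication by a sign matrix, because $\Lambda_n$ has simple spectrum. Conjugation by sign matrices does not change $\pr_\D(Q\Lambda_nQ^T)$, so $C=C_n\circ\mu|_\Delta$, where $C_n$ denotes the map of the previous paragraph restricted to $\Delta_n$. By Theorem \ref{thm:parallel-transport}, $\mu$ is an $\E$-equivariant embedding and $\mu(\Delta)\subset\Delta_n$. Since $\Delta$ is compact and $C_n$ is a homeomorphism onto $\mathcal{P}_n$, the composite $C$ is a homeomorphism onto the image $C_n(\mu(\Delta))$. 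The homeomorphism part is therefore immediate.

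The remaining task is to show that this image is a subcomplex, namely the union of the faces indexed by the principal order ideal $\mu(\cS^{\cT})$. I would use the decomposition $\Delta=\bigcup_{\p\in\mathfrak{M}}(\Delta\cap\cT_\p)$, where each $\Delta\cap\cT_\p$ is the positive chamber of a product tridiagonal isospectral manifold. Under $\mu_\p$, $\Delta\cap\cT_\p$ maps onto an intersection of mirrors of $\cM(\Delta_n)$, namely the set where the subdiagonal entries at the block changes of $\p$ vanish, restricted to the connected component containing $\mu(\Lambda^+_\p)$. This uses item (3) of Theorem \ref{thm:parallel-transport} together with Proposition \ref{pro:mirror} applied to $\cT_n$. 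The surjectivity onto that whole component holds because $\mu_\p$ is a diffeomorphism between product isospectral manifolds of equal dimension, and it commutes with the $\E$-action.

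Under $C_n$ such a face of $\Delta_n$ goes to a face of $\mathcal{P}_n$: a product of smaller permutahedra, with vertices the permutations in the coset $\mu(\Lambda^+_\p)\cS_{n,\p}$ restricted to block-preserving rearrangements. Hence $C(\Delta)$ is a finite union of faces of $\mathcal{P}_n$. It is closed under taking faces because faces of the faces are again intersections of mirrors. This gives a subcomplex, and item (2) of Theorem \ref{thm:parallel-transport} identifies its vertex set with $\mu(\cS^{\cT})$.

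The main obstacle I expect is the identification of $C_n$ as a face-preserving homeomorphism $\Delta_n\cong\mathcal{P}_n$ given by exactly this formula, with $Q$ on the correct side. The paper's anticonjugation convention and the footnote about left versus right weak order suggest that the naive diagonal of $\mu(X)$ itself is not the right map. I would first check the formula on $\cT_3$, the hexagon in Example \ref{ex:Cayley}, and then invoke the convexity theorem for the $\So(n-1)\times$torus-type moment map from \cite{To}.
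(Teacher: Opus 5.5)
Your proposal is correct and is essentially the paper's argument. $C$ is the composite of $\mu|_\Delta$ with the twisted Schur--Horn homeomorphism $\Delta_n\cong\mathcal{P}_n$ (the paper attributes this to \cite{BFR}, not \cite{To}), and each $\Delta\cap\cT_\p$ is sent onto a product permutahedron, i.e.\ a face of $\mathcal{P}_n$, so $C(\Delta)$ is a union of faces; your additional checks (independence of the choice of $Q$ up to sign matrices, compactness for the homeomorphism onto the image, and surjectivity of $\mu_\p$ onto the whole face) just make explicit what the paper's short proof leaves implicit.
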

\begin{proof}
The twisted Schur-Horn map
\[\Delta_n\to \cP_n,\quad X=Q^T\Lambda_n Q^T\mapsto \pr_\D(Q\Lambda_n Q^T)\]
is the canonical homeomorphism between $\Delta_n$ and $\cP_n$.

 A submanifold component $\cT_\p$ embeds in $\cT_n$ as a product tridiagonal submanifold inside $\sl(\p)$. The map $C$ sends $\cT_\p\cap \Delta$ to the corresponding product permutahedron. This is the convex hull of the image of the permutations in  $\mu(\cS^{\cT}\cap \cT_\p)$, which is a face of $\mathcal{P}_n$.
 Therefore $C(\Delta)$ is a subcomplex of $\mathcal{P}_n$.
\end{proof}

 \begin{remark}The homeomorphism $\Delta_\star\to \mathcal{P}_n$ sends a permutation to its inverse. If we work in $\Delta_\star$, then  we have to consider the right Bruhat order (what we have been doing so far). The faces of  $\Delta_\star$
are in bijection with standard parabolic cosets of $\cS_{n}$. To each coset in $\cS_n/\cS_{n,\p}$ there corresponds the interval $[P,PP_0]$, where $P_0$ is the longest element of $\cS_{n,\p}$  and $P$ is the shortest representative of the coset.
If we work with the convex polytope, then we must switch to the left weak Bruhat order, or relabel the vertices by value rather than by position (c.f. Example \ref{ex:Cayley}).
\end{remark}

By \cite[Theorem 8]{BB} $\mu(\cS^{\cT})\subset \cS_{n}$ is the principal order ideal
of $\mu(\Lambda^+)$: All permutations that precede $\mu(\Lambda^+)$. This defines the {\bf face subcomplex} $\mathcal{K}(\cT)\subset \Delta_\star$ whose faces are the ones all of whose vertices are in $\mu(\cS^{\cT})$, i.e., they precede $\mu(\Lambda^+)$.
In other words, faces correspond to intervals $[P,PP_0]$ with $PP_0\leq \mu(\Lambda^+)$.

\begin{corollary}\label{cor:order-ideal}
The face subcomplex $\mathcal{K}(\cT)\subset \Delta_\star$ equals $\mu(\Delta)$. In particular, the positive chamber $\Delta$ and the intersection of the mirrors of $\cM(\Delta)$ are contractible.
\end{corollary}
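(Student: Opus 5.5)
We will prove the equality $\mathcal{K}(\cT)=\mu(\Delta)$ as an equality of subcomplexes of $\Delta_\star$, comparing faces on both sides. Both sides are unions of faces of $\Delta_\star$. The faces of $\Delta_\star$ are the positive chambers of the product tridiagonal isospectral manifolds $\cT_{\star,\p}$ attached to parabolic cosets $[P,PP_0]$. Therefore it suffices to show that a face lies in $\mu(\Delta)$ if and only if all of its vertices lie in $\mu(\cS^{\cT})$.

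For the inclusion $\mu(\Delta)\subset\mathcal{K}(\cT)$, write $\Delta=\bigcup_{\p\in\mathfrak{M}}(\cT_\p\cap\Delta)$ using Proposition \ref{lem:arrangement}. By Theorem \ref{thm:parallel-transport}, $\mu$ maps $\cT_\p$ by parallel transport onto a product tridiagonal isospectral manifold in $\sl(\p)$. Its positive chamber is a face of $\Delta_\star$, as in the proof of Corollary \ref{cor:convex-embedding}. The vertices of this face are the diagonal matrices of that product manifold. Since $\mu$ is a diffeomorphism of $\cT_\p$ onto it and restricts to the standardization map on diagonal matrices (item (2) of Theorem \ref{thm:parallel-transport}), these vertices lie in $\mu(\cS^{\cT})$. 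By \cite[Theorem 8]{BB}, they therefore precede $\mu(\Lambda^+)$. Hence $\mu(\cT_\p\cap\Delta)$ is a face of $\mathcal{K}(\cT)$.

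For the reverse inclusion, which is the main step, take a face $F=[P,PP_0]$ of $\Delta_\star$, with partition $\p$, all of whose vertices lie in $\mu(\cS^{\cT})$. We must produce a simple partition of $\cT$ realizing $F$. Consider the preimage under $\mu$ of the vertex $PP_0$; it is a diagonal matrix $\Lambda\in\cT$. The plan is to show that $\p$ is simple at $\Lambda$. Suppose a block of $\p$ contained two equal eigenvalues of $\Lambda$ in positions $i<j$. Standardization preserves the relative order of equal values, so $\mu(\Lambda)$ has no inversion at $(i,j)$. Composing with the transposition inside the Young subgroup $\cS_{n,\p}$ that swaps these two positions gives another vertex of $F$, and that vertex has the inversion $(i,j)$ between standardized copies of equal values. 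Such a permutation is not in the image of standardization, a contradiction. I expect the main technical care to be needed here: one has to check that this swapped permutation is genuinely a vertex of $F$, which I would do by taking the longest element of the block coset. Once $\p$ is simple at $\Lambda$, $\cT_{\p(\Lambda)}$ is a product tridiagonal isospectral manifold in $\cT$ (contained in some component $\cT_{\p'}$, $\p'\in\mathfrak{M}$). By the previous paragraph, $\mu(\cT_{\p(\Lambda)}\cap\Delta)$ is the face with the same partition and the same vertex set as $F$, namely $F$ itself. Hence $F\subset\mu(\Delta)$.

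Contractibility then follows. Since $\mu$ is an embedding, $\Delta\cong\mathcal{K}(\cT)$, and face subcomplexes of principal order ideals of the weak order are contractible by \cite{BW}. For intersections of mirrors, Theorem \ref{thm:parallel-transport}(3) identifies them with $\mu(\Delta)$ intersected with a face of $\Delta_\star$. By Proposition \ref{pro:mirror}, such an intersection is the positive chamber of a product tridiagonal isospectral set. Applying the same identification blockwise, it is a product of face subcomplexes of principal order ideals in the factors, hence contractible by \cite{BW} again.
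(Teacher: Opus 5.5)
Your proof is correct. The equality $\mathcal{K}(\cT)=\mu(\Delta)$ is proved by the same two inclusions as in the paper.

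Your swap argument makes explicit a step the paper only asserts (``all these permutations are in $\mu(\cT_\p)$''): that the partition of a face of $\mathcal{K}(\cT)$ is simple at the corresponding diagonal matrix. The extra care you anticipate is not needed. $PP_0t$ lies in the coset $P\cS_{n,\p}$ for every $t\in\cS_{n,\p}$. In fact, at $PP_0$ the values already decrease within each block, which directly contradicts standardization if a block carried two equal eigenvalues.

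Where you genuinely differ is the ``in particular'' part.

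The paper argues in the dual fan of $\mathcal{P}_n$. The principal ideal of $\mu(\Lambda^+)$ is the region cut out by the root hyperplanes on whose negative side the chamber of $\mu(\Lambda^+)$ lies; this region is convex, and a nerve argument transfers its contractibility to $\mathcal{K}(\cT)$. Mirror intersections are treated as closed weak-order intervals whose realizations are star-shaped, or alternatively shellable by \cite{BW}.

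You instead do two things:
\begin{enumerate}
\item You quote contractibility of the face subcomplex from \cite{BW}, as the paper's introduction does.
\item You reduce mirror intersections, via Proposition \ref{pro:mirror}, to products of positive chambers of smaller tridiagonal isospectral sets, and apply the first statement factor by factor.
\end{enumerate}
Your recursion avoids any further interval combinatorics and is arguably cleaner. However, it makes the whole contractibility claim rest on the citation. The paper's convexity argument is what makes its proof self-contained. Either way this is a second proof, since Propositions \ref{pro:contractible} and \ref{pro:mirror} already give the same contractibility via the Morse function.
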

\begin{proof}
 By Corollary \ref{cor:convex-embedding} $C(\cT_\p)$ is a face of $\mathcal{P}_n$, and this is equivalent to  $\mu(\cT_\p)\cap \Delta_\star$ being a face of $\Delta_\star$ all whose vertices lie in $\mathcal{K}(\cT)$. Conversely, an interval  $[P,PP_0]$ corresponds to a partition $\p$ and a choice of values for each block. For $P$ the values are ordered increasingly within blocks. If $PP^0\leq  \mu(\Lambda^+)$, then all these permutations are the in  $\mu(\cT_\p)$ (where the choice of $\p\in \mathcal{P}$ depends also on the values on each block), and this implies that the ensuing face of $\mathcal{K}(\cT)$ lies in $\mu(\cT_\p)\cap \Delta_\star$.

The subcomplex  $\mathcal{K}(\cT)$ is contractible. One can use the nerve theorem
and transfer the problem to the geometric realization of the order complex of $\mathcal{K}(\cT)$ in  the dual fan of $\mathcal{P}_n$. There, the weak  Bruhat order is defined by passing through a root hyperplane from the negative to the positive side. The principal ideal of  $\mu(\Lambda^+)$ is the region cut out by all root hyperplanes for which the chamber corresponding to  $\mu(\Lambda^+)$ is in their negative side. This is a convex region, and therefore contractible. The argument for mirrors and their intersection is analogous. They define closed intervals  whose geometric realization is contractible, since it is star-shaped towards the region the minimum element. (Alternatively, one can invoke shellability of closed intervals \cite{BW}).
%
\end{proof}

\section{Hermitian matrices and split real semisimple Lie algebras}\label{sec:genenralizations}
We discuss some generalizations of the previous results.

\subsection{Hermitian matrices}\label{ssec:Hermitian}

 The tridiagonal isospectral set has the same definition. It is an arrangement of (complex) submanifolds of the flag manifold $\cO$ with combinatorial properties identical to those of the real case. The same Morse function as in the real case produces a $\mathrm{CW}$-complex decomposition of $\cT$ with only even dimensional cells; as many $2d$ dimensional cells as partitions of $\Lambda$ with $d$ ascents,
 \[\beta_{2d}(\cT)=\sum_{j=0}^{d+1} (-1)^j\begin{pmatrix}n+1\\j\end{pmatrix}\begin{pmatrix}n_1+d-j\\n_1\end{pmatrix}\cdots \begin{pmatrix}n_k+d-j\\n_k\end{pmatrix}.\]
 The tridiagonal isospectral set is related to the the parabolic Hessenberg variety
 \[\{[X]\in \Sl_\C/\mathbb{T}\,|\, X^{-1}\Lambda X\in \H\},\]
 where the equivalence classes are elements of the manifold of full flags. In the simple spectrum case the tridiagonal isospectral manifold and the regular Hessenberg variety are twin manifolds in the sense of  \cite{AB}. They may be non-homeomorphic, however, their $\mathbb{T}$-equivariant cohomology rings are canonically isomorphic. Such a twin relationship and their cohomological consequences is specific of the regular case. In spite of this, the tridiagonal isospectral set and the parabolic Hessenberg varieties seem to behave in an analogous manner.
 The singular locus of the Hessenberg variety has been described in combinatorial terms (for any complex semisimple Lie group using the corresponding Weyl group analysis) \cite{IP,PT}. A factorization of its Poincaré polynomial and some relations of its the Betti numbers with those of some irreducible components have been described \cite{PT}. It would be interesting to explore in detail these  analogies.

 Our results extend to quaternion matrices, where we regard a quaternion as complex $2\times 2$ matrix
 and $\sp_\mathbb{H}\subset \sl_\C$ as block $2\times 2$ complex matrices with a quaternion on each block.
 The cells of $\cT$ have dimension multiple of four and the resulting formula for its Betti numbers is the same.

 As it turns out, the constructions in $\sl_\C$ restricts to the constructions of the corresponding real forms $\sl_\R$ an $\sl_\mathbb{H}$, the fixed-point sets by the respective involutions
 \[X\mapsto \overline{X},\qquad X\mapsto -PJ\overline{X}JP^{-1},\]
 where \[J=\begin{pmatrix} 0& \mathrm{I}_n\\-\mathrm{I}_n & 0\end{pmatrix},\qquad P=(1,3,5,\dots,2n-1,2,4,\dots,2n).\]

\subsection{Split real semisimple Lie algebras}
The result in Sections 3 and 4 extend to tridiagonal isospectral sets for split real semisimple Lie algebras.

We work in the setting described in \cite[Sections 10-13]{Da2}. Let $\gg$ be a split real semisimple Lie algebra. Because we are interested in aspect of the adjoint action on $\gg$ we may choose any connected group integrating it. We let $\G$ be split real form of the simply connected group that integrates complexified Lie algebra $\gg_\C$, which is connected. We fix a root system for $\gg$ and a Chevalley basis for the corresponding root system in $\gg_\C$.

\begin{itemize}
 \item The Hessenberg space is the sum of the Cartan subalgebra and the $-1$ eigenspace (for the Cartan involution $\theta$) of sum of root spaces of simple roots and their opposites,
\[\ss_\H=\hh\oplus \sum_{\alpha\in \prod}\ss_i,\quad \ss_i=(\gg_{\alpha_i}\oplus \gg_{-\alpha_{i}})^\theta,\quad \prod=\{\alpha_1,\dots,\alpha_n\}.\]
The Chevalley basis defines a basis of $\ss_\H$ so that the summands above are coordinate subspaces.
The  elements of $\ss_\H$ which non-simple spectrum correspond to non-regular elements for the adjoint action on $\gg$.
\item The tridiagonal isospectral set is the intersection $\cT=\cO\cap \ss_\H$, where $\cO$ is the adjoint $\K$-orbit of $\Lambda\in \hh$. The non-regularity condition means that not every simple root is non-zero on $\Lambda$. The intersection $\cS^{\cT}=\cT\cap \hh$ is the orbit of $\Lambda$ by the Weyl group $\cS$, the right cosets in $\cS/\cS_{\cT}$, where $\cS_{\cT}$ is the centralizer of $\Lambda^+$, the  element of $\cT$ in the positive Weyl chamber $\Delta_\hh$.

\item  Subsets of $\prod$ correspond partitions in our sense. Such a subset  $\p$ defines a split reductive subalgebra $\gg(\p)$ by only keeping root spaces of roots that are sums of simple roots in  $\prod\backslash \p$. This is the centralizer of any $\Lambda\in \cS^\cT$ for which exactly the roots in $\p$ vanish. The group $\G(\p)$ is the centralizer of $\Lambda$. It Hessenberg space is
\[\H_\p=\hh\oplus \sum_{\alpha_i\in \prod\backslash\p}\ss_i.\]
A partition is simple at $\Lambda$ if no root which is sum of roots in $\prod\backslash \p$ vanishes at $\Lambda$,
\[\gg(\p)\cap \ll\subset  \ll^\Lambda,\qquad \ll=\sum_{\alpha>0}\gg_{-\alpha},\quad \ll^\Lambda=\sum_{\alpha>0,\,\alpha(\Lambda)\neq 0}\gg_{-\alpha}.\]
 Equivalently, $\Lambda$ is regular in the Hessenberg space of $\p$.

 Our notation deviates from Davis' in the sense that we use the complementary subset of roots. (We do it to work with usual partitions of an ordered set). In particular, our notation $\p\cap \p'$ corresponds to the intersection of the complementary subsets of roots.  Also, we do not work with the semisimple part $\gg(\p)$, which is a split real semisimple Lie algebra. The reason is that we consider the adjoint action of $\K(\p)$ on $\gg(\p)$, which factors through the center.
 \item The subgroup $\E$ is the centralizer in $\K$ of $\hh$, and it is isomorphic to ${(\mathbb{Z}^2)}^n$. One can choose generators
$\sigma_i$ that act on $\ss_\H$ by fixing $\hh$ and $\ss_{j}$, $j\neq i$, and by multiplying times -1  on  $\ss_i$. In the orbit type stratification for the action of $\E$ on $\ss_\H$ the centralizers
are subgroups $\E_\p\subset \E$, where $\p$ records the vanishing off-diagonal coordinates of $X\in \ss_\H$.
\item  If $X\in \cT$ belongs to $\cI$, then $\cT_\p$ is the intersection of its adjoint  $\K(\p)$-orbit with $\ss_\H$, where $\p$ is the partition that corresponds to the strata $\cI$ to which $X$ belongs. It is a manifold contained in $\H_\p$.
\item  For a partition $\p$ the subgroup $\cS_\p$ is generated by  the simple reflections of $\cS$ that correspond to roots in $\prod\backslash \p$. The positive chamber $\Delta_{\hh,\p}$  for the action of $\cS_\p$ on $\hh$ is the fundamental chamber containing the positive Weyl chamber $\Delta_{\hh}$.
That $\p$ be simple at $\Lambda$ means that it belongs to the interior of some chamber for the action of $\cS_\p$.
The simple partitions of $\cT$ are equivalence classes of partitions simple at some elements in $\cS^{\cT}$, where $\p(\Lambda)\sim \p(\Lambda')$ if $\Lambda$ and $\Lambda'$ are in the same orbit for the action of $\cS_\p$. For instance, for $\Lambda^+$ the only maximal partition simple at $\p$ is the one with $\cS_\p=\cS_\cT$.
\end{itemize}

With these preliminaries, that $\cT$ is an arrangement of tridiagonal isospectral manifolds  corresponding to maximal simple partitions of $\cT$ is proved as in Lemma \ref{lem:arrangement}.
If $\p_1,\dots,\p_r$ are simple partitions at $\Lambda$ and $\Lambda'$, the so $\p=\cap_i\p_i$ is. Therefore $\Lambda$ and $\Lambda'$ sit in the interior of chambers for the action of $\cS_{\p}$ on $\Delta_\hh$, so there is a unique element in  $\cS_{\p}$ taking $\Lambda$ to $\Lambda'$. This implies
\[\bigcap_i\cT_{\p_i}=\cT_{\p}.\]
A vector in the tangent space at $X\in \cT_\p$ is represented by a curve  in $\K(\p)$ at the identity, and the subgroups $\K(\p)$ and $\cap_i \K(\p_i)$ have the same Lie algebra.

The construction of adapted local coordinates has to be adjusted due to the non-linearity of $\L$. We recall that the choices of Cartan involution and positive roots determine an Iwasawa factorization of $\G$. The ensuing  $\mathrm{QR}$-type factorization is $\G=\K\U$, where $\U$ is the connected integration of the subalgebra $\uu=\hh\oplus\sum_{\alpha>0}\gg_\alpha$ (and $\L$ is the integration of $\ll$). The Toda vector field $\cY$ is defined by the same formula \eqref{eq:Toda}
\[X'=[X,\pi_{\kk}X],\quad \mathrm{I}=\pi_{\kk}+\pi_{\uu}.\]
Symes' factorization holds in this general setting and the map
\[\psi:\L\to \cO,\quad L\mapsto \kappa(L)^{-1}\Lambda \kappa(L),\quad \kappa:\G=\K\U\to  \K,\]
takes orbits of the adjoint action of $\mathrm{\exp(t\Lambda)}$ to the trajectories of the Toda vector field.

The chart uses the subspace of $\ll^\Lambda\subset \ll$ and the exponential-based map
\[ \ll^\Lambda=\sum_{\alpha>0,\,\alpha(\Lambda)\neq 0}\gg_{-\alpha},\qquad \psi_2:\ll^\Lambda\overset{\exp}{\longrightarrow} \L\overset{\psi}{\longrightarrow}\cO.\]
The subspace is the orthogonal complement to the centralizer of $\Lambda$ in $\ll$. (The root subspaces of the centralizer correspond to the repeated pairs; the innermost ones are those not in the image of the bracket of the centralizer with itself). Therefore   $\psi_2$ is a local diffeomorphism.
 The commutative triangle \eqref{cd:triangle} (c.f. \cite[Remark 6]{MT2}) shows that
\begin{equation}\label{eq:Hessenberg-image}\psi^{-1}_2(\cT)=\{L\in \ll^\Lambda\,|\, \exp(-L)\Lambda \exp(L)\in \ll^\Lambda_\H=\sum_{\alpha_i(\Lambda)\neq 0}\gg_{-\alpha_i}\}.\end{equation}
For any  $L=\sum L_\alpha \in \ll$, on each negative root space $\exp^{[-L,\Lambda]}$ is  a polynomial on the variables  $L_\alpha$ analogous to the right hand side of  \ref{eq:conj-diagonal}. For $L\in \ll^\Lambda$, using induction the height of root spaces we can write $L_{\alpha}$, $\gg_{-\alpha}\subset \ll^\Lambda$, as a monomial in the subdiagonal variables $L_i$, $\alpha_i(\Lambda)\neq 0$.
We define $\phi$ and $\Psi$ as in the proof of Proposition \ref{pro:adapted-coordinates},
and we have
\[\Psi^{-1}(\cT)=\T^\Lambda.\]
The reason is that because
\[L\in \gg(\p)\cap \H\Longrightarrow \exp^{[-L,\Lambda]}\in \gg(\p),\]
if we start with
\[L=\sum_{\alpha_i\in \prod\backslash \p} L_i,\]
 then we can  can add elements to $L$ in other negative root spaces using induction on the height  to obtain a solution to \eqref{eq:Hessenberg-image}. This means that $\psi_2^{-1}(\cT)$ is a graph over $\T^\Lambda$, and that
 \[\phi\circ \psi^{-1}_2(\cT)=\pr_\H\circ \psi_2^{-1}(\cT),\quad \pr_\H:\ll^\Lambda\to \ll^\Lambda_\H=\ll^\Lambda\cap \H.\]
 Because the orthogonal projection $\ll^\Lambda\to \ll^\Lambda_\H$ is equivariant with respect to the action by conjugation by $\exp(t\Lambda)$, the local diffeomorphism $\Psi:\T^\Lambda\to \cT$ sends $L'=[L,-\Lambda]$ to $\cY$. It is also equivariant with respect to the action of $\E$.

To discuss the topology of $\cT$, we fix $A\in \mathrm{int}(\Delta_\hh)$ and consider the linear function
\[f:\ss\to \R,\quad X\mapsto -\langle A,X\rangle. \]
This is a a Lyapunov Morse function for the opposite  Toda vector field $-\cY$ on $\cO$ \cite[Lemma 1 and Proposition 7]{Fa}.
The Hessian of $f$ in these local coordinates provided by $\Psi$  is minus \eqref{eq:Hessian}, where the differences of diagonal elements are replaced by evaluations over positive roots (see \cite[Proposition 1.4]{DKV}, where the formula is for the exponentiation from $\kk$ rather than $\ll$, and where our additional modifications do not change the Hessian since the local diffeomorphisms are tangent to the identity).
%

The restriction of the linear function $f:\cT\to \R$
is also a Lyapunov Morse function for the Toda vector field.
We define the auxiliary function $F_\Lambda:\ll_{\Lambda,\H}\to \R$, the auxiliary diagonal linear  vector field $-Y$ on $\ll^\Lambda_\H$ (replacing difference of eigenvalues by evaluations of simple roots), and the modification of $f$ to match the Hessian in adapted local coordinates, as we did in Section \ref{sec:homology} before Lemma \ref{lem:Milnor}. With the auxiliary data in place,  the construction of standard homotopy $h_\Lambda$ supported near $f^{-1}(\Lambda)\subset \cT$ in  Lemma \ref{lem:Milnor} also applies.

Lemma \ref{lem:runs} and Corollary \ref{cor:morse-ordering} also hold. The index of $F_\Lambda$ is the number of simple roots with $\alpha_i(\Lambda)>0$. The partition  that collects all simple roots with $\alpha_j(\Lambda)\leq 0$ is simple at $\Lambda$.  If $\p$ is a maximal simple partition at $ \Lambda$ preceded by it, then  $\T_\p$ contains the stable subspace of $F_\Lambda$. The absence of local maxima and minima is proved in a similar way.

The  $\mathrm{CW}$-complex structure on $\cT$ built out of $f$ has attaching maps with trivial degree. The proof using the fundamental reflections of $\E$ is identical.

Theorem \ref{thm:betti} holds true.
The element $\Lambda\in \cS^\cT\cong \cS/\cS_{\p}$ is identified with the minimal length representatives of the coset. For a simple root that,
 $\alpha_i(\Lambda)>0$ is equivalent to the corresponding reflection in $\cS$ being  a ascent for $\Lambda \in \cS$. Therefore the identification of the $d$-th Betti number with elements in $\cS^\cT\subset \cS$  with $d$ ascents is valid.

The tridiagonal isospectral set $\cT$ is aspherical. The proof via algebraic topology is exactly the same, since the result of Davis \cite{Da2} holds for tridiagonal isospectral manifolds of $\gg$. In particular $\widetilde{\cT}$  is a collection of euclidean spaces whose finite intersections are either empty or another euclidean space.

The mirror structure is defined in the same way:
The positive chamber $\Delta$ are the points whose coordinates are positive in the basis of $\ss_\H$; the mirrors are the collection of connected components of the fixed-point set $\Delta^{\sigma_i}$ over the fundamental reflections $\sigma_i$. The mirrors $\Delta^{\sigma}_j$ are in correspondence with the orbits of the action of $\cS_{\p_{\sigma}}$ on $\cS^\cT$. To the mirror we associate its intersection with $\hh$. To an $\cS_{\p_{\sigma}}$-orbit there we associate $\cT_{\p_{\sigma}}$ defined as its  $\K(\p_{\sigma})$-orbit intersected with $\ss_\H$. This is a tridiagonal isospectral set for $\gg(\p_{\sigma})$, and its positive chamber for the action of $\E_{\p_{\sigma}}$, which is connected, is the given mirror. (Should there be more than one $\cS_{\p_\sigma}$ orbit in the intersection, the corresponding tridiagonal isospectral sets would be disjoint, and thus the mirror would be disconnected).
If a non-empty intersection of $\cS_{\p_{\sigma_i}}$-orbits contains $\Lambda$ and $\Lambda'$, then any  element in $\cS$ of minimal length taking $\Lambda$ to $\Lambda'$ does not contain fundamental reflections corresponding to the simple roots $\alpha_i$, and therefore it belongs to $\cS_\p$, $\p=\cap_i\p_{\sigma_i}$. In terms of the corresponding tridiagonal isospectral sets,
\begin{equation}\label{eq:face-intervals}\bigcap_i\cT_{\p_{\sigma_i}}=\cT_{\p},
 \end{equation}
and the intersection of the mirrors is the positive chamber of $\cT_{\p}$ for the action of $\E_{\p}$. Because the modification of the standard homotopy in Proposition \ref{pro:contractible} can be done in the same way, the positive chamber, the mirrors, and their non-empty intersections are contractible.

Equation \eqref{eq:face-intervals} is just a geometric translation about intersections of intervals in $\cS^\cT$ coming face intervals in $\cS$ for the (right) weak Bruhat order. (Those that corresponds to the faces of the permutahedron for $\gg$ defined as the convex hull of a regular $\cS$-orbit in $\hh$). Equivalently, it says that  the non-empty intersection of  $\cS_{\p_i}$-orbits is a $\cS_{\p}$-orbit.
The conclusion also holds if one just assumes non-empty pairwise intersections. The argument is similar to the one in the proof of Theorem \ref{thm:mirror}.  We assume that the simple roots of the (possibly disconnected) Dynkin diagram labeled with the usual Bourbaki convention, starting from a short root at maximal distance form the fork root.
We order the partitions $\p_1,\dots,\p_s$ according to the root we removed. There is a splitting $\cS_{\p_i}=\cS_{\p_{i,1}}\times \cS_{\p_{i,2}}$, where the factors are generated by the fundamental reflections of $\cS$ associated to simple roots before and after $\alpha_i$, respectively. (If we remove a boundary root then one factor becomes trivial).
Let $\Lambda_1$ be in the $\cS_{\p_1}$-orbit determined by $\Delta_j^{\sigma_1}$.
There is a permutation $P_1\in \cS_{\p_1}$ taking $\Lambda_1$ to $\Lambda_2$ in the $\cS_{\p_2}$-orbit determined by $\Delta_j^{\sigma_2}$,
 unique up the the action of $\cS_{\p_1\cap \p_2}$. Next, there is a permutation  $P_2\in \cS_{\p_2}$ taking $\Lambda_2$ to $\Lambda_3$ in the $\cS_{\p_3}$-orbit determined by $\Delta_j^{\sigma_3}$, unique up to the action of
$\cS_{\p_2\cap \p_3}$. Because $\cS_{\p_{2,1}}\subset \cS_{\p_2\cap \p_3}$ one can assume that $P_2$ is the identity on this factor. This implies that $P_2\in \cS_{\p_1}$, and thus
\[\Lambda_3\in \Delta_j^{\sigma_1}\cap \Delta_j^{\sigma_2}\cap \Delta_j^{\sigma_3}.\]  An inductive argument proves that the intersection is non-empty.  Therefore \ref{thm:mirror} also holds, and this shows that $\cT$ is aspherical by a different method.

The construction of the embedding $\mu:\cT\to \cT_\star$ also holds. The family of subspaces $\cH$ is defined using the restriction of the Killing form on each stratum $\cT$.
The subset $\widehat{\cI}$ is the union of the $\cT_X$ as $X$ ranges over $\cI$. The map
\[\lambda:\widehat{\cI}\to \Delta_{\hh,\pp}\subset\hh\]
is defined by taking $X\in \cI$ and  intersecting $\cT_X$ with the positive chamber $\Delta_{\hh,\pp}$, where $\p$ is the partition corresponding to the stratum $\cI$. It is a submersion and $\cH$ is an Ehreshmann connection for $\lambda$. The corresponding parallel transport map
\[\mu:\cT_\p\to \cT_\star,\quad \cT_\star\cap \Delta_{\hh,\pp}\neq\emptyset\]
is an embedding equivariant with respect to the actions of $\E_\p$ and $\cS_{\p}$. The compatibility with the Toda vector field follows from the transverse intersection of the tridiagonal isospectral manifold with $\ss_\H$ \cite[Lemma 5]{MT}.
The compatibility for the different submanifold components of $\cT$ also holds. One takes the segment  from $\Lambda^+\in \cT\cap \Delta_\hh$ to $\Lambda_\star\in \Delta_\hh$. For each $\p\in \mathfrak{M}$, one makes parallel transport over the image the segment of the shortest element $P_\p$ in the coset of $\cS/\cS_{\p}$ that takes $\Lambda^+$ to $\Lambda^+_\p$. Restricting the parallel transport to $\cT_{\p\cap \p'}$  agrees with the parallel transport
$\mu_{\p\cap \p'}$ over the image by the segment of the only element $P_2$ of $\cS_{\p}$ taking $\cT_\p\cap \Delta_{\hh,\pp}$ to $\cT_{\p\cap \p'}\cap \Delta_{\hh,\p\cap \p'}$. (The product $P_2P_\p$ is the shortest element in the coset that takes $\Lambda^+$ to $\Lambda^+_{\p\cap \p'}$).
This allows to prove the contractibility of the intersection of the mirrors as in Corollary \ref{cor:order-ideal}. In particular,  Theorem \ref{thm:mirror-embedding} for  tridiagonal isospectral sets of split real semisimple Lie algebras.

 The permutahedron $\cP_n\subset \hh$ for the tridiagonal isospectral manifold $\cT_n\subset \ss_\H$ is the convex hull in $\hh$ of $\cT_n\cap \hh$. The face subcomplex $\mathcal{K}(\cT)\subset \cP_n$ is the collection of faces of $\cP_n$ all whose vertices lie in $\mu(\cS^\cT)$. Corollary \ref{cor:order-ideal} holds  using the moment map based homeomorphism $\Delta_n\longrightarrow \cP_n\subset \hh$ defined in \cite[Section 4.1]{BFR}.

\begin{remark}\label{rem:David}
As we already discussed, in \cite[Section 7]{Da2} Davis defines for the tridiagonal isospectral manifold $\cT_n$ a standard submanifold $\cT_{n,j}^{\E_\p}$ as a connected component of the fixed-point set of $\E_\p\subset\E$.  He shows that
\[1\to \pi_1(\cT_{n,j}^{\E_\p})\to \pi_1(\cT_n),\]
and that the the mirror structure of $\mathfrak{M}(\Delta_n)$ induces a mirror structure on $\cT_n^{\E_\p}$.
From a combinatorial perspective, standard submanifolds of $\cT_n$ correspond to faces of the permutahedron/face intervals of $\cS$.

The same result holds for connected components of the fixed-point set for the action of $\E_\p$ on $\cT$, that we could call standard parabolic subarrangements. They correspond to the mirrors and the non-empty intersections of mirrors of $\cM(\Delta)$, or, equivalently, the intersection with $\cS^\cT$ of face intervals of $\cS$. Finally,
the embedding $\mu:\cT\to \cT_n$ exhibits the latter subarrangements inside $\cT_n$, thus enlarging Davis' family of  standard submanifolds to one whose members have the same properties.
\end{remark}

\end{document}